\newcommand{\summe}[2]{\displaystyle\sum_{#1}^{#2}}
\newcommand{\summezwei}[2]{\sum_{#1}^{#2}}
\newcommand{\integral}[4]{\displaystyle\int\limits_{#1}^{#2}{#3}\,{#4}}
\newcommand{\indikator}[2]{\mathds{1}_{{#1}}({#2})}
\newcommand{\indikatorzwei}[1]{\mathds{1}_{{#1}}}
\newcommand{\erwartung}[1]{\mathbb{E}({#1})}
\newcommand{\limes}[2]{\lim \limits_{#1 \rightarrow #2}}
\newcommand{\limesinf}[2]{\liminf \limits_{#1 \rightarrow #2} \,}
\newcommand{\ft}[1]{\widehat{{#1}}}
\newcommand{\B}{\mathcal{B}}
\newcommand{\im}{\mathrm{i}}
\newcommand{\C}{\mathbb{C}}
\newcommand{\R}{\mathbb{R}}
\newcommand{\N}{\mathbb{N}}
\newcommand{\Pro}{\mathbb{P} } 
\newcommand{\eps}{\varepsilon} 
\newcommand{\secret}[1]{}
\newcommand{\norm}[1]{\| {#1}\|}
\newcommand{\Li}[1]{\text{L}(\R^{#1})}
\newcommand{\skp}[2]{ \langle {#1},{#2} \rangle}
\newtheorem{theorem}{Theorem}[section]
\newtheorem{lemma}[theorem]{Lemma}
\newtheorem{prop}[theorem]{Proposition}
\newtheorem{cor}[theorem]{Corollary}
\theoremstyle{definition}
\newtheorem{defi}[theorem]{Definition}
\newtheorem{example}[theorem]{Example}
\theoremstyle{remark}
\newtheorem{remark}[theorem]{Remark}
\numberwithin{equation}{section}
\begin{document}
\sloppy
\title[]{Multivariate tempered stable random fields}

\author{D. Kremer}
\address{Dustin Kremer, Department Mathematik, Universit\"at Siegen, 57068 Siegen, Germany}
\email{kremer\@@{}mathematik.uni-siegen.de}

\author{H.-P. Scheffler}
\address{Hans-Peter Scheffler, Department Mathematik, Universit\"at Siegen, 57068 Siegen, Germany}
\email{scheffler\@@{}mathematik.uni-siegen.de}

\date{\today}

\subjclass[2010]{60E07; 60G52; 60G57; 60H05}
\keywords{Tempered stable distributions; Independently scattered random measures; Stochastic integrals; Tangent fields}
\begin{abstract}
Multivariate tempered stable random measures (ISRMs) are constructed and their corresponding space of integrable functions is characterized in terms of a quasi-norm utilizing the so-called Rosinski measure of a tempered stable law. In the special case of exponential tempered ISRMs operator-fractional tempered stable random fields are presented by a moving-average and a harmonizable representation, respectively. 
\end{abstract} 
\maketitle
\begin{center}
Dedicated to the late Mark M. Meerschaert. 
\end{center}
\section{Introduction}
Multivariate tempered stable distributions are obtained by modifying the L\'{e}vy measure of a multivariate stable law by a certain \textit{tempering function} $q(r,\theta)$, see \eqref{eq:17052010} below. This results in tempering the large jumps of the corresponding L\'{e}vy process. See \cite{tappe,rosinski} for the basic theory of tempered stable laws. A prominent subclass of tempered stable laws is obtained by using exponential tempering $q(r,\theta)=q_{\lambda}(r)=e^{- \lambda r}$ for some $\lambda>0$. These exponential tempered stable laws have what is called semi-heavy tails, meaning that the tails decay like a power law for in an intermediate regim and then decay exponentially fast near infinity. They have various applications, see e.g. \cite{mark1,mark5,mark4,mark2,mark3,mark6}. \\
Symmetric $\alpha$-stable random measures $\mathbb{M}(dx)$ are used to construct a large class of stable processes in terms of stochastic integrals via
\begin{equation*}
X(t)=\int f_t(x) \, \mathbb{M}(dx)
\end{equation*}
for appropriate \textit{kernel functions} $f_t$. Prominent examples are fractional stable processes given by either a moving-average or a harmonizable representation, see \cite{SaTaq94} and the literature cited there. By modifying the kernel function of fractional stable processes, so-called tempered fractional stable motions are obtained, see \cite{didier}. The purpose of this paper is to construct fractional tempered stable fields and processes and to study their basic properties. In doing so, we are not modifying the kernel function, but utilizing so-called \textit{tempered stable random measures}. That is, the random noise $\mathbb{M}(dx)$ gets tempered, whereas the kernel functions remains unchanged. \\
Using the recently developed general theory of multivariate \textit{independently scattered random measures} (abbreviated by ISRM) and their corresponding stochastic integrals (see \cite{integral}), in \cref{chapter2} we introduce multivariate tempered stable ISRMs and characterize theirs space of integrable functions utilizing a quasi-norm based the so-called \textit{Rosinski measure} of the underlying tempered stable law. The most prominent example is given by the exponentially tempered ISRM $\mathbb{M}_{\lambda}$, see \cref{17052077} below. Using those we then give examples of operator-fractional tempered stable fields, using ideas and methods of \cite{multi,paper2}. It turns out that the range of possible parameters of those fields is larger than their corresponding stable fields. In \cref{chapter4}, we analyze what happens if the exponential tempering parameter $\lambda$ tends to zero. It turns out that in this case $\mathbb{M}_{\lambda}$ (and the constructed random fields) converge in distribution to theirs stable counterpart $\mathbb{M}_0$. Moreover, it is shown that operator-fractional tempered stable random fields given by the moving-average representation behave locally like operator-fractional stable random fields, using the notion of localisability. 

\section{Preliminary Results} \label{chapter2}
We start with some notation. Write $\min(a,b)=a \wedge b$ and $\max(a,b)=a \vee b$, respectively. Let $\norm{\cdot}$ be the Euclidian norm on $\R^d$ with inner product $\skp{\cdot}{\cdot}$. Denote the set of all linear operators on $\R^d$ by $L(\R^d)$ with $I=I_d$ being the identity operator on $\R^d$, represented as $d \times d$ matrices in each case. Then, by a little abuse of notation, we also write $\norm{\cdot}$ for the operator norm on $L(\R^d)$ (which is induced by the vector space norm $\norm{\cdot}$). Denote the transposed vector of $x$ by $x^t$ and the adjoint matrix of $D$ by $D^{*}$, respectively. Finally, all occurring random vectors are defined on an underlying probability space $(\Omega, \mathcal{A},\Pro)$ and we equip $\R^d$ with its Borel $\sigma$-Algebra $\B(\R^d)$ (accordingly for $L(\R^d))$.
\subsection{Tempered stable distributions and random measures}
Consider $\mu$ to be a tempered $\alpha$-stable distribution (abbreviated by $T \alpha S$) on $\R^d$ in the sense of Definition 2.1 in \cite{rosinski}, where the \textit{stability index} $0<\alpha<2$ is fixed throughout the paper. That means that $\mu$ is infinitely-divisible without Gaussian part and that its \textit{L\'{e}vy measure} $\phi$ in polar coordinates is of the form 
\begin{equation} \label{eq:17052010}
\phi(dr,d \theta )= r^{- \alpha-1} q(r,\theta) \,dr \, \sigma(d\theta).
\end{equation}
Here $\sigma$ is a finite measure on $S^{d-1}:=\{x \in \R^d: \norm{x}=1\}$ and $q:(0, \infty) \times S^{d-1} \rightarrow (0,\infty)$ is a Borel function such that, for every $\theta \in S^{d-1}$, we have:
\begin{itemize}
\item[(i)] $q(\cdot, \theta)$ is completely monotone, i.e.  $(-1)^n \frac{\partial^n}{\partial r^n} q(r,\theta)>0$ for all $r>0$ and $n \in \N$. 
\item[(ii)] $\lim_{r \rightarrow \infty} q(r,\theta)=0$.
\end{itemize}
We call $q$ the tempering function of the $T \alpha S$ distribution $\mu$. It allows to define a measure $\mathcal{R}$ on $\R^d$, which is often referred to as the Rosinski measure. The details can be found in \cite{rosinski}.  Actually, it is uniquely determined by the relation
\begin{equation} \label{eq:14052002}
\phi(A)= \int_{\R^d} \int_{0}^{\infty}  \indikator{A}{rx} r^{- \alpha-1} e^{-r} \,dr \, \mathcal{R}(dx), \quad A \in \B(\R^d).
\end{equation}
It fulfills 
\begin{equation*} 
\mathcal{R}(\{0\})=0, \qquad \int_{\R^d}(\norm{x}^2 \wedge \norm{x}^{\alpha}) \,\mathcal{R}(dx) <\infty,
\end{equation*}
which implies that $\int (1 \wedge \norm{x}^2) \, \mathcal{R}(dx)<\infty$. So $\mathcal{R}$ is a L\'{e}vy measure, too. Also recall the \textit{L\'{e}vy-Khintchine-Formula} (see Theorem 3.1.11 in \cite{thebook}) to verify that $\mu \sim [a, 0, \phi]$, where the vector $a\in \R^d$ is a \textit{shift}. In other words, the characteristic function of $\mu$ is given by $\ft{\mu}(u)=\exp(\psi(u))$, where
\begin{equation} \label{eq:14052001}
\psi(u)= \im \skp{a}{u}+ \int_{\R^d} \left(e^{\im \skp{u}{x} } -1-\frac{\im \skp{u}{x} }{1+\norm{x}^2} \right) \, \phi(dx) , \quad u \in \R^d
\end{equation}
is the corresponding \textit{log-characteristic function} (abbreviated by LCF).
\begin{remark} \label{15052055}
\begin{itemize}
\item[(i)] If $q$ additionally fulfills $\lim_{r \downarrow 0} q(r,\theta)=1$ for every $\theta \in S^{d-1}$, then $\mu$ is called a \textit{proper} $T \alpha S$ distribution in the terminology of \cite{rosinski}. Theorem 2.3 in \cite{rosinski} states that $\mu$ is proper if and only if $\int \norm{x}^{\alpha} \, \mathcal{R}(dx)< \infty$.
\item[(ii)] It is usual to assume that $\mu$ is \textit{full}, i.e. not concentrated on any hyperplane of $\R^d$. Using Proposition 3.1.20 in \cite{thebook} together with \eqref{eq:14052002} it follows that $\mu$ is full if and only if $\mathcal{R}$ is not concentrated on any $d-1$ dimensional subspace of $\R^d$. In this case, following the notation in \cite{thebook}, we say that $\mathcal{R}$ is \textit{$\mathcal{M}$-full}.
\item[(iii)] The function $\psi$ can sometimes be represented more explicitly, cf. \cite{tappe} for the case $d=1$. Moreover, Theorem 2.9 in \cite{rosinski} presents integral representations for $\psi$ in terms of the measure $\mathcal{R}$. However, they are based on a different function within the integral in \eqref{eq:14052001}, which leads to a different shift vector. See Remark 8.4 in \cite{sato} for details. 
\item[(iv)] The previous distinction is negligible if $\mu$ is symmetric. Because in this case $\phi$ is also symmetric and $a=0$, which allows to rewrite \eqref{eq:14052001} as
\begin{equation} \label{eq:14052005}
\psi(u)=\int_{\R^d} (\cos \skp{u}{x}-1) \,\phi(dx) , \quad u \in \R^d.
\end{equation}
\end{itemize}
\end{remark}
In what follows we want to use $\mu$ in order to construct a new class of \textit{$T \alpha S$ random measures}. For this purpose let $(S,\Sigma,\nu)$ be a $\sigma$-finite measure space and $\mathcal{S}:=\{A \in \Sigma: \nu(A)< \infty\}$. Note that $\mathcal{S}$ is a so-called \textit{$\delta$-ring} whose generated $\sigma$-algebra equals $\Sigma$ (see \cite{integral}). Recall that a mapping $\mathbb{M}:\mathcal{S}\rightarrow \{X:\Omega \rightarrow \R^d \,|  \,\text{$X$ random vector}\}$ is called an $\R^d$-valued iindependently scattered) random measure on $\mathcal{S}$ if the following conditions hold true:
\begin{itemize}
\item[($RM_1$)] For every finite choice $A_1,...,A_k$ of disjoint sets in $\mathcal{S}$ the random vectors $\mathbb{M}(A_1),...,\mathbb{M}(A_k)$ are stochastically independent.
\item[($RM_2$)] For every sequence $(A_n) \subset \mathcal{S}$ of disjoint sets fulfilling $\cup_{n=1}^{\infty} A_n \in \mathcal{S}$ we have that
\begin{equation*}
\mathbb{M} (\cup_{n=1}^{\infty} A_n)=\summe{n=1}{\infty} \mathbb{M}(A_n) \quad \text{almost surely (a.s.).}
\end{equation*}
\end{itemize}
Using Example 3.7 in \cite{integral} we get the following result.
\begin{theorem} \label{15052002}
Recall that $\mu \sim [a,0,\phi]$, let $\mathcal{S}$ and $\nu$ be as before. Then there exists an $\R^d$-valued ISRM $\mathbb{M}$ on $\mathcal{S}$ such that $\mathbb{M}(A)$ has an infinitely-divisible distribution given by $\mathbb{M}(A) \sim [\nu(A) \cdot a, 0, \nu(A) \cdot \phi]$ for every $A \in \mathcal{S}$.
\end{theorem}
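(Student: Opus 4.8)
The plan is to obtain $\mathbb{M}$ as the spatially homogeneous instance of the general existence theorem for multivariate ISRMs recorded as Example~3.7 in \cite{integral}. That result produces an ISRM from a \emph{generating family}, i.e.\ a shift set function, a Gaussian-covariance set function and a Lévy-intensity set function on the $\delta$-ring $\mathcal{S}$, provided these are suitably $\sigma$-additive and a certain control measure is $\sigma$-finite. The genuinely analytic part --- realizing all the $\mathbb{M}(A)$ on one probability space so that $(RM_1)$ and $(RM_2)$ hold, via a Kolmogorov-type construction --- is carried out there. Consequently the task here reduces to supplying the right data coming from the fixed triplet $[a,0,\phi]$ and verifying the hypotheses.

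First I would specify the generating family induced by $\mu$ and $\nu$: take the shift to be $A \mapsto \nu(A)\,a$, the Gaussian part to be identically $0$, and the Lévy intensity to be the product $F(A,B):=\nu(A)\,\phi(B)$ for $A \in \mathcal{S}$ and $B \in \B(\R^d)$. The reason this yields the claimed marginals is that, for a fixed infinitely divisible law, the map $t \mapsto [ta,0,t\phi]$ is admissible for every $t\ge 0$ (it is the triplet of the convolution power $\mu^{*t}$), because in \eqref{eq:14052001} the truncation function $x/(1+\norm{x}^2)$ is fixed and $\phi$ is scaled homogeneously. Scaling $\psi$ by $\nu(A)$ therefore produces the drift term $\im\skp{\nu(A)a}{u}$ and the Lévy part $\nu(A)\phi$ with the \emph{same} truncation, so no re-centering occurs and the log-characteristic function of $\mathbb{M}(A)$ is $\nu(A)\psi$, which is precisely that of $[\nu(A)a,0,\nu(A)\phi]$.

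It then remains to check the hypotheses of Example~3.7. The $\sigma$-additivity of $A \mapsto \nu(A)a$ and of $A \mapsto \nu(A)\phi(B)$ on $\mathcal{S}$ is immediate from the $\sigma$-additivity of $\nu$ and the linearity of both expressions in $\nu(A)$. For each fixed $A \in \mathcal{S}$ the measure $F(A,\cdot)=\nu(A)\phi$ is again a Lévy measure, since $\phi$ is the Lévy measure of $\mu$ (so $\int_{\R^d}(1\wedge\norm{x}^2)\,\phi(dx)<\infty$) and $\nu(A)<\infty$. Finally, in this homogeneous setting the associated control measure reduces to the fixed multiple $\lambda = c\,\nu$ with $c=\norm{a}+\int_{\R^d}(1\wedge\norm{x}^2)\,\phi(dx)<\infty$, so $\lambda$ is $\sigma$-finite precisely because $\nu$ is. With these verifications in place Example~3.7 delivers the desired ISRM.

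The main obstacle is bookkeeping rather than analysis: one must ensure that the shift convention of \cite{integral} matches \eqref{eq:14052001}, so that multiplying the triplet by $\nu(A)$ yields the shift $\nu(A)a$ and not some $\nu(A)$-dependent re-truncation correction. As noted above this is automatic here, because the truncation function is fixed and independent of the scaling, and hence the identification $\mathbb{M}(A)\sim[\nu(A)a,0,\nu(A)\phi]$ holds verbatim.
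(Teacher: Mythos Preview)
Your approach is exactly the paper's: the theorem is stated immediately after ``Using Example 3.7 in \cite{integral} we get the following result'' and no further proof is given, so invoking that example with the homogeneous data $(\nu(A)a,0,\nu(A)\phi)$ is precisely what the authors intend. Your additional verification of the hypotheses (Lévy-measure property, $\sigma$-additivity, $\sigma$-finiteness of the control measure, and consistency of the truncation convention) is more than the paper spells out and is correct.
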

In view of \cref{15052001} below it is reasonable to call $\mathbb{M}$ a tempered ($\alpha$-)stable ISRM, which is \textit{generated} by $\mu$. Also note that $\mathbb{M}$, being infinitely-divisible, is closely related to its so-called \textit{control measure} $\lambda_{\mathbb{M}}$ on $\Sigma$, cf. Theorem 3.2 in \cite{integral}. In particular, $\lambda_{\mathbb{M}}$ equals $\nu$ up to a positive constant.  
\subsection{Integration theory}
Usually, the main benefit of ISRMs results from the consideration of corresponding stochastic integrals. More precisely, for $f:S \rightarrow L(\R^d)$, we want to define integrals of the form 
\begin{equation*}
I_{\mathbb{M}}(f):=I(f):=\int_S f(s) \mathbb{M}(ds), 
\end{equation*}
leading to $\R^d$-valued random vectors again. Since this integral is achieved by a stochastic limit, conditions for the existence of $I(f)$ have been studied in \cite{integral} (see \cite{RajRo89} for the case $d=1$). We omit the details. Instead we focus on the main results, adapted to our setting. Therefore, denote the set of all functions $f:S \rightarrow L(\R^d)$ for which $I(f)$ is well-defined by $\mathcal{I}(\mathbb{M})$. Then Proposition 4.5 in \cite{integral} and Theorem 2.3 in \cite{paper2} yield the following.
\begin{theorem} \label{16052050}
Let $\mathbb{M}$ be as in \cref{15052002} and define $U: L(\R^d) \rightarrow \R^d$ by
\begin{equation*}
U(D):=D a + \int_{\R^d} \left( \frac{D x}{1+\norm{D x}^2} -\frac{D x}{1+\norm{x}^2} \right) \,\phi(dx) , \quad D \in L(\R^d).
\end{equation*}
Then, for $f:S \rightarrow L(\R^d)$ measurable, the following statements are equivalent:
\begin{itemize}
\item[(i)] $f \in \mathcal{I}(\mathbb{M})$.
\item[(ii)] $J_1(f)+J_2(f)< \infty$ holds true, where
\begin{equation} \label{eq:22052073}
J_1(f):= \int_S \norm{U(f(s))} \,\nu(ds) \quad \text{and} \quad J_2(f):=\int_S \int_{\R^d}  (1 \wedge \norm{f(s)x}^2) \, \phi(dx) \, \nu(ds).
\end{equation}
\item[(iii)] For every $u \in \R^d$ we have that $\int |\psi(f(s)^{*}u)| \, \nu (ds)< \infty $ and the mapping 
\begin{equation*}
\R^d \ni u\mapsto \int_S \int_{\R^d}(\cos \skp{f(s)^{*}u}{x}-1) \, \phi(dx) \, \nu(ds)
\end{equation*}
is continuous. 
\end{itemize}
In each case it follows that $I(f)$ is infinitely-divisible. More precisely, $I(f) \sim [a^f,0, \phi^f]$, where $a^f:=\int U(f(s)) \, \nu(ds)$ and
\begin{equation} \label{eq:16052001}
 \phi^f(A):= (\nu \otimes \phi) (\{(s,x) \in S \times \R^d: f(s)x \in A \setminus \{0\}\}), \quad A \in \B(\R^d).
\end{equation}
Hence, the LCF of $I(f)$ is given by  $\R^d \ni u \mapsto \int \psi(f(s)^{*}u) \, \nu (ds)$.
\end{theorem}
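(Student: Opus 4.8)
The plan is to obtain the claim by specialising the general integration theory for $\R^d$-valued ISRMs (Proposition 4.5 in \cite{integral} together with Theorem 2.3 in \cite{paper2}) to the present situation, where the local characteristics of $\mathbb{M}$ are constant in $s$ and the Gaussian part vanishes. The first step is to read off from \cref{15052002} that the control measure of $\mathbb{M}$ is $\nu$ (up to the positive constant mentioned there, which affects none of the integrability conditions) and that the associated local L\'evy--Khintchine triple is $[a,0,\phi]$, independent of $s \in S$. Consequently the general criterion, which in its abstract form demands the finiteness of a shift integral, a Gaussian integral and a L\'evy integral, collapses to two terms, the Gaussian contribution being identically zero.

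The central computation, on which the shape of $J_1$ and of $U$ rests, is the determination of the L\'evy--Khintchine triple of the transformed vector $D X$ for $D \in L(\R^d)$ and $X \sim [a,0,\phi]$, since $\mathbb{M}(A) \sim [\nu(A)a, 0, \nu(A)\phi]$ makes simple integrands of the form $D\,\mathbb{M}(A)$ the relevant building blocks. Here $DX$ has characteristic function $\ft{\mu}(D^{*}u)=\exp(\psi(D^{*}u))$, so its LCF is $\psi(D^{*}u)$. Using $\skp{a}{D^{*}u}=\skp{Da}{u}$ and $\skp{D^{*}u}{x}=\skp{u}{Dx}$ in \eqref{eq:14052001} and then substituting $y=Dx$, one rewrites $\psi(D^{*}u)$ in standard form with L\'evy measure equal to the image of $\phi$ under $x \mapsto Dx$. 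The only subtlety is that the compensator in \eqref{eq:14052001} is expressed through $1+\norm{x}^2$ rather than through $1+\norm{Dx}^2$; correcting for this discrepancy produces exactly the integral defining $U(D)$, so that $D X \sim [U(D),0,\phi \circ D^{-1}]$. Substituting $D=f(s)$ and integrating against $\nu$ then turns the abstract shift integral into $J_1(f)$ and the abstract L\'evy integral into $J_2(f)$, which establishes the equivalence of (i) and (ii).

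For the equivalence with (iii) and the final distributional statement I would argue through the candidate log-characteristic function. Since $I(f)$ is obtained as a stochastic limit of integrals of simple functions $\sum_i D_i \,\mathbb{M}(A_i)$, whose LCFs add up to $\sum_i \nu(A_i)\,\psi(D_i^{*}u)$, the natural candidate for the LCF of $I(f)$ is $u \mapsto \int_S \psi(f(s)^{*}u)\,\nu(ds)$. The condition in (iii) expresses precisely that this candidate is well defined (pointwise absolute integrability of $\psi(f(s)^{*}u)$) and continuous in $u$; by L\'evy's continuity theorem the latter is what upgrades a pointwise limit of characteristic functions to a genuine characteristic function, which is the content of Theorem 2.3 in \cite{paper2}. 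Reading off the triple of this LCF via \eqref{eq:14052001}, using the per-$s$ triple $[U(f(s)),0,\phi\circ f(s)^{-1}]$ from the previous step, yields $a^f=\int_S U(f(s))\,\nu(ds)$ and the L\'evy measure $\phi^f$ in the image form \eqref{eq:16052001}.

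The step I expect to be the main obstacle is the rigorous passage from the per-$s$ triples to the global triple $[a^f,0,\phi^f]$: one must justify that the abstract integrability conditions of \cite{integral} are genuinely equivalent to the finiteness of $J_1(f)+J_2(f)$ (in particular that the correction term hidden in $U$ is integrable exactly when $J_1(f)$ is finite), and that the Fubini-type manipulations converting $\nu \otimes \phi$ into $\phi^f$ and the pointwise LCFs into their $\nu$-integral are licit under (ii). The change-of-compensator computation producing $U$ is elementary but must be carried out carefully, since mishandling the truncation is exactly what would spoil the identification of the shift $a^f$.
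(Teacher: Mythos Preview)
Your proposal is correct and matches the paper's approach exactly: the paper does not give a separate proof but simply states that the theorem follows from Proposition 4.5 in \cite{integral} and Theorem 2.3 in \cite{paper2}, which is precisely the specialisation you outline. Your additional explanation of how $U(D)$ arises from the change-of-compensator when passing from $X$ to $DX$, and how the abstract conditions collapse to $J_1(f)+J_2(f)<\infty$ in the absence of a Gaussian part, is accurate and in fact more detailed than what the paper itself provides.
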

The notion \textit{stochastic integral} is first of all justified by the fact that $\mathcal{I}(\mathbb{M})$ is a real vector space and that the mapping $\mathcal{I}(\mathbb{M}) \ni f \mapsto I(f)$ is linear. Moreover, for $f,f_1,f_2,... \in \mathcal{I}(\mathbb{M})$, we have that $I(f_n) \rightarrow I(f)$ in probability if and only if 
\begin{equation} \label{eq:19052001}
\forall u \in \R^d: \quad \int_S \psi((f_n(s)-f(s))^{*}u) \, \nu (ds) \rightarrow 0 \quad (\text{as $n \rightarrow \infty$}).
\end{equation}
This and further properties can be found in Proposition 4.3 and Theorem 4.4 in \cite{integral}. In addition, we identify elements in $\mathcal{I}(\mathbb{M})$ that are identical $\nu$-a.e. because this implies that the resulting integrals are equal a.s. and vice versa. \\
Next we want to understand $J_2(f)$ (recall \eqref{eq:22052073}) in terms of the Rosinski measure $\mathcal{R}$, which is inspired by Lemma 2.10 in \cite{rosinski}. 
\begin{lemma} \label{15052076}
We have that 
\begin{equation*} 
J_2(f)< \infty \quad \Leftrightarrow \quad H(f):=\int_S \int_{\R^d} (\norm{f(s)x}^{\alpha} \wedge \norm{f(s)x}^2) \, \mathcal{R}(dx) \, \nu(ds)<\infty. 
\end{equation*}
\end{lemma}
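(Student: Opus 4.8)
The plan is to reduce the two-variable statement (in $(s,x)$) to a one-dimensional estimate in the radial variable, using the defining relation \eqref{eq:14052002} of the Rosinski measure, and then to sandwich $J_2(f)$ between two constant multiples of $H(f)$. Note first that the reduction works for a general tempering function $q$, since \eqref{eq:14052002} always carries the factor $e^{-r}$ regardless of $q$.

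First I would fix $s\in S$, abbreviate $D:=f(s)$, and rewrite the inner $\phi$-integral of $J_2(f)$ by means of \eqref{eq:14052002}. That equation exhibits $\phi$ as the image of $r^{-\alpha-1}e^{-r}\,dr\,\mathcal{R}(dx)$ under $(r,x)\mapsto rx$, so the standard extension from indicators to arbitrary nonnegative measurable integrands (a monotone-class argument) gives, using $\norm{D(rx)}=r\norm{Dx}$ for $r>0$,
\begin{equation*}
\int_{\R^d}(1\wedge\norm{Dy}^2)\,\phi(dy)=\int_{\R^d}\int_0^\infty\bigl(1\wedge r^2\norm{Dx}^2\bigr)\,r^{-\alpha-1}e^{-r}\,dr\,\mathcal{R}(dx).
\end{equation*}
Applying Tonelli (all integrands are nonnegative) to bring the $\nu$-integral inside then yields
\begin{equation*}
J_2(f)=\int_S\int_{\R^d}K\bigl(\norm{f(s)x}\bigr)\,\mathcal{R}(dx)\,\nu(ds),\qquad K(c):=\int_0^\infty(1\wedge r^2c^2)\,r^{-\alpha-1}e^{-r}\,dr.
\end{equation*}

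The heart of the matter is the purely one-dimensional claim that there are constants $0<c_1\le c_2<\infty$, depending only on $\alpha$, with
\begin{equation*}
c_1\,(c^\alpha\wedge c^2)\le K(c)\le c_2\,(c^\alpha\wedge c^2)\qquad\text{for all }c\ge0,
\end{equation*}
the case $c=0$ being trivial. To prove it I would split the $r$-integral at the crossover $r=1/c$ where $r^2c^2=1$, writing $K(c)=c^2\int_0^{1/c}r^{1-\alpha}e^{-r}\,dr+\int_{1/c}^\infty r^{-\alpha-1}e^{-r}\,dr$. For $0<c\le1$ (so $c^\alpha\wedge c^2=c^2$ and $1/c\ge1$) the first term is comparable to $c^2$ via $\int_0^1 r^{1-\alpha}e^{-r}\,dr\le\int_0^{1/c}r^{1-\alpha}e^{-r}\,dr\le\Gamma(2-\alpha)$, while the tail term is at most $e^{-1/c}=o(c^2)$ and hence negligible. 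For $c\ge1$ (so $c^\alpha\wedge c^2=c^\alpha$ and $1/c\le1$) the restriction $r\le1/c\le1$ gives the lower bound $K(c)\ge c^2e^{-1}\int_0^{1/c}r^{1-\alpha}\,dr=\tfrac{e^{-1}}{2-\alpha}c^\alpha$, and dropping $e^{-r}\le1$ in both pieces gives $K(c)\le\bigl(\tfrac{1}{2-\alpha}+\tfrac{1}{\alpha}\bigr)c^\alpha$. Alternatively one checks that $K$ is continuous and strictly positive on $(0,\infty)$ with $K(c)/c^2\to\Gamma(2-\alpha)$ as $c\downarrow0$ and $K(c)/c^\alpha\to\frac{1}{2-\alpha}+\frac{1}{\alpha}$ as $c\to\infty$, so that $c\mapsto K(c)/(c^\alpha\wedge c^2)$ extends continuously and positively to the compact $[0,\infty]$ and is therefore bounded away from $0$ and $\infty$.

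Substituting the claim into the displayed formula for $J_2(f)$ and applying Tonelli once more gives $c_1H(f)\le J_2(f)\le c_2H(f)$, which is exactly the asserted equivalence $J_2(f)<\infty\Leftrightarrow H(f)<\infty$. The only genuinely delicate point is the two-sided estimate for $K$, and within it the requirement that the comparison constants be \emph{uniform} in $c$: the matching of $K$ with $c^2$ near $0$ and with $c^\alpha$ near $\infty$ is immediate asymptotically, but one must ensure the factor $e^{-r}$ neither spoils the lower bound for large $c$ (handled by integrating only over $r\le1/c\le1$, where $e^{-r}\ge e^{-1}$) nor the upper bound for small $c$ (handled by the super-polynomial decay of the tail term). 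Everything else is routine measure theory.
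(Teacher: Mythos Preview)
Your proof is correct and follows essentially the same route as the paper: both rewrite $J_2(f)$ via \eqref{eq:14052002} as an integral of a one-variable function (your $K(c)$ with $c=\norm{f(s)x}$ is precisely the paper's $g(z)$ with $z=c^{-1}$), then sandwich this function between constant multiples of $c^{\alpha}\wedge c^{2}$ using its asymptotics and continuity/positivity on $(0,\infty)$. Your explicit case analysis for $c\le 1$ and $c\ge 1$ is a mild elaboration of the same idea, and your ``alternative'' asymptotic argument is exactly the paper's argument.
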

\begin{proof}
Recall that $0<\alpha<2$ is fixed and consider for $z>0$ the incomplete gamma type functions
\begin{equation*}
\gamma(2-\alpha,z):=\int_{0}^{z}u^{(2-\alpha)-1}e^{-u} \, du =\int_{0}^{z}u^{1-\alpha }e^{-u} \, du  \quad \text{and} \quad  \Gamma(- \alpha,z):=\int_{z}^{\infty}u^{- \alpha-1}e^{-u} \,du,
\end{equation*}
respectively. For instance, see Proposition 12 in \cite{jameson} together with \cite{temme} in order to verify that the following asymptotics hold true: As $z \rightarrow 0$ we have that
\begin{equation*} 
z^{\alpha - 2} \gamma(2- \alpha ,z) \rightarrow (2- \alpha)^{-1} \quad \text{and} \quad z^{ \alpha } \Gamma(-\alpha. z)  \rightarrow \alpha^{-1}.
\end{equation*}
On the other hand, as $z \rightarrow \infty$, 
\begin{equation*} 
 \gamma(2- \alpha,z) \rightarrow \Gamma(2-\alpha)=\int_{0}^{\infty} u^{1- \alpha}e^{-u} \,du \quad \text{and} \quad  z^{1+\alpha} e^{z} \Gamma(-\alpha,z)  \rightarrow 1.
\end{equation*}
If we define $g(z):=z^{-2} \gamma(2- \alpha, z)+ \Gamma(- \alpha,z)$, it follows that
\begin{equation} \label{eq:15052093}
\limes{z}{0} z^{\alpha} g(z)= (2-\alpha)^{-1} + \alpha^{-1}>0 ,
\end{equation}
\begin{equation} \label{eq:15052094}
 \limes{z}{\infty} z^2 g(z)= \limes{z}{\infty} ( \gamma(2-\alpha,z) + z^{1+\alpha}e^{z} \Gamma(-\alpha,z) \times z^{1-\alpha}e^{-z}  )=\Gamma(2-\alpha)>0 .
\end{equation}
Combine \eqref{eq:15052093}-\eqref{eq:15052094} and use the fact that $z \mapsto g(z)/(z^{- \alpha} \wedge z^{-2})$ is continuous as well as strictly positive on $(0,\infty)$ to obtain constants $C_1,C_2>0$ fulfilling 
\begin{equation} \label{eq:15052097}
\forall z>0: \quad C_1 (z^{- \alpha} \wedge z^{-2} )\le g(z) \le C_2 (z^{- \alpha} \wedge z^{-2}).
\end{equation}
On the other hand, \eqref{eq:14052002} yields that
\begin{equation*}
J_2(f)=\int_S \int_{\R^d} \int_0^{\infty} (1 \wedge \norm{r f(s)x}^2) r^{-\alpha-1} e^{-r} \, dr \, \mathcal{R}(dx) \, \nu(ds).
\end{equation*}
Now fix $(s,x) \in S \times \R^d$ such that $f(s)x \ne 0$ and let $z=\norm{f(s)x}^{-1}$ to observe that
\begin{equation*}
\int_0^{\infty} (1 \wedge \norm{r f(s)x}^2) r^{-\alpha-1} e^{-r} \, dr =z^{-2} \int_{0}^{z} r^{1- \alpha} e^{-r} \, dr + \int_{z}^{\infty} r^{-\alpha-1} e^{-r} \, dr =g(z).
\end{equation*}
Then the assertion follows from \eqref{eq:15052097}.
\end{proof}
The function $H(f)$, which we just introduced, is similar to the approach in \cite{multi} and will be useful again later. But before we still have to explain in which sense $\mathbb{M}$ and the corresponding integrals have a tempered stable behavior. In this context, note that $f= \indikatorzwei{A} I $ belongs to $\mathcal{I}(\mathbb{M})$ for every $A \in \mathcal{S}$. 
\begin{prop} \label{15052001}
Consider $f,f_1,...f_k \in \mathcal{I}(\mathbb{M})$. Then the random vector $(I(f_1),...,I(f_k))^t$ has a $T \alpha S$ distribution. Moreover, the Rosinski measure of $I(f)$ is given by
\begin{equation} \label{eq:16052004}
\mathcal{R}^f(A):=(\nu \otimes \mathcal{R}) (\{(s,x) \in S \times \R^d: f(s)x \in A \setminus \{0\}\}), \quad A \in \B(\R^d),
\end{equation}
which means that 
\begin{equation}  \label{eq:16052007}
\phi^f(A) = \int_{\R^d} \int_{0}^{\infty}  \indikator{A}{ry} r^{- \alpha-1} e^{-r} \, dr \, \mathcal{R}^f(dy).
\end{equation}
\end{prop}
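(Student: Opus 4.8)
The plan is to establish \eqref{eq:16052007}, since this single identity delivers both assertions at once: it exhibits the L\'evy measure $\phi^f$ of $I(f)$ in the canonical form \eqref{eq:14052002}, and the correspondence between $T\alpha S$ laws and their Rosinski measures (\cite{rosinski}) then both certifies that $I(f)$ is $T\alpha S$ and identifies \eqref{eq:16052004} as its Rosinski measure. The whole computation rests on the observation that each $f(s)\in L(\R^d)$ is linear, so the map $(s,x)\mapsto f(s)x$ commutes with radial scaling.

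First I would treat the one-dimensional claim (the case $k=1$). By \cref{16052050} we already know $I(f)\sim[a^f,0,\phi^f]$ with $\phi^f$ given by \eqref{eq:16052001}, that is, $\phi^f(A)=\int_S\int_{\R^d}\indikator{A\setminus\{0\}}{f(s)x}\,\phi(dx)\,\nu(ds)$. Into this I substitute the representation \eqref{eq:14052002} of $\phi$ in terms of $\mathcal{R}$; using $f(s)(ry)=r\,f(s)y$ and Tonelli's theorem (all integrands are nonnegative, so reordering is free) this becomes
\[
\phi^f(A)=\int_0^\infty\!\! r^{-\alpha-1}e^{-r}\Big(\int_S\int_{\R^d}\indikator{A}{r\,f(s)y}\,\mathcal{R}(dy)\,\nu(ds)\Big)dr.
\]
For each fixed $r$ the inner double integral is nothing but the integral of $w\mapsto\indikator{A}{rw}$ against the image of $\nu\otimes\mathcal{R}$ under $(s,y)\mapsto f(s)y$; restricted to $\{f(s)y\neq0\}$ this image is exactly $\mathcal{R}^f$ from \eqref{eq:16052004}. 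Recognizing it and interchanging the $r$- and $w$-integrations once more yields \eqref{eq:16052007}.

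It remains to check that $\mathcal{R}^f$ is an admissible Rosinski measure. By construction $\mathcal{R}^f(\{0\})=0$, and the image-measure formula gives $\int_{\R^d}(\norm{w}^2\wedge\norm{w}^\alpha)\,\mathcal{R}^f(dw)=\int_S\int_{\R^d}(\norm{f(s)x}^2\wedge\norm{f(s)x}^\alpha)\,\mathcal{R}(dx)\,\nu(ds)=H(f)$, which is finite because $f\in\mathcal{I}(\mathbb{M})$ forces $J_2(f)<\infty$ by \cref{16052050} and hence $H(f)<\infty$ by \cref{15052076}. This is the one place where the earlier lemma is genuinely needed. With \eqref{eq:16052007} and this integrability in hand, the Rosinski characterization shows that $I(f)$ is $T\alpha S$ with Rosinski measure $\mathcal{R}^f$.

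For the joint statement I would run the same argument for the stacked integrand $\bar f(s):=(f_1(s)^t,\dots,f_k(s)^t)^t$, regarded as an $L(\R^d,\R^{dk})$-valued map, so that $I(\bar f)=(I(f_1),\dots,I(f_k))^t$ and the L\'evy measure of the joint vector on $\R^{dk}$ is the image of $\nu\otimes\phi$ under $T(s,x):=(f_1(s)x,\dots,f_k(s)x)^t$. Since $T(s,ry)=r\,T(s,y)$ by linearity of each $f_j(s)$, substituting \eqref{eq:14052002} and applying Tonelli exactly as before produces the $\R^{dk}$-analogue of \eqref{eq:16052007}, with Rosinski measure equal to the image of $\nu\otimes\mathcal{R}$ under $T$; its integrability follows from $H(f_j)<\infty$ for each $j$ together with the equivalence of $\norm{\cdot}$ on $\R^{dk}$ with the sum of the component norms. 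Hence the joint vector is $T\alpha S$. I expect the main obstacle to be precisely this step: justifying that the joint L\'evy measure is the stated image measure, i.e. extending \eqref{eq:16052001} from a single square integrand to the rectangular stacked integrand $\bar f$ (alternatively, deriving the joint LCF $u\mapsto\int_S\psi(\sum_j f_j(s)^{*}u_j)\,\nu(ds)$ from the linearity of $f\mapsto I(f)$ and reading off its L\'evy measure). Once that representation is secured, the remaining computation is the same Tonelli-plus-image-measure manipulation as in the scalar case.
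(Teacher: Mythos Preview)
Your treatment of the case $k=1$ is essentially the paper's proof: substitute \eqref{eq:14052002} into \eqref{eq:16052001}, use linearity of $f(s)$ and Tonelli to obtain \eqref{eq:16052007}, and then invoke \cref{15052076} together with $J_2(f)<\infty$ (from \cref{16052050}) to verify $\int(\|y\|^\alpha\wedge\|y\|^2)\,\mathcal{R}^f(dy)<\infty$, so that Theorem~2.3 of \cite{rosinski} applies.

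For $k>1$ the paper does not attempt to work with your rectangular stacked integrand $\bar f$, and thereby sidesteps exactly the obstacle you flag. Instead it first cites \cite{integral} to obtain the joint LCF \eqref{eq:17052001}, and then observes that $\psi'((u_1,\dots,u_k)^t):=\psi(\sum_j u_j)$ is itself the LCF of a $T\alpha S$ law $\mu'$ on $\R^{kd}$. This $\mu'$ generates a new $\R^{kd}$-valued ISRM $\mathbb{M}'$, and with the \emph{block-diagonal} (hence square) integrand $f'(s):=f_1(s)\oplus\cdots\oplus f_k(s)\in L(\R^{kd})$ one checks that $(I(f_1),\dots,I(f_k))^t\overset{d}{=}\int f'\,d\mathbb{M}'$. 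The joint statement then reduces cleanly to the already-proved case $k=1$, applied to $\mathbb{M}'$ and $f'$. What this buys over your approach is that one never leaves the square-matrix framework of \cref{16052050} and never needs to identify the joint L\'evy measure directly; your alternative of reading off the L\'evy measure from \eqref{eq:17052001} would also succeed, but requires tracking the change in truncation when passing from $\R^d$ to $\R^{kd}$ (which only affects the drift), whereas the paper's block-diagonal trick reuses the $k=1$ machinery verbatim.
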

\begin{proof}
We first assume that $k=1$. Recall that $I(f)$ has no Gaussian part and that its L\'{e}vy measure $\phi^f$ is given by \eqref{eq:16052001}. Using \eqref{eq:14052002} together with the definition of $\mathcal{R}^f$ from \eqref{eq:16052004} it follows for every $A \in \B(\R^d)$ that
\begin{align*}
\phi^f(A)&= \int_S \int_{\R^d} \int_{0}^{\infty} \indikator{A \setminus \{0\}}{r f(s)x} r^{-\alpha-1}e^{-r} \, dr \, \mathcal{R}(dx) \, \nu(ds) \\
&= \int_{\R^d} \int_{0}^{\infty} \indikator{A }{ry} r^{- \alpha -1} e^{-r} \, dr \, \mathcal{R}^f(dy), 
\end{align*}
i.e. \eqref{eq:16052007} holds true. It remains to show that 
\begin{equation} \label{eq:16052027}
\int (\norm{y}^{\alpha} \wedge \norm{y}^2) \, \mathcal{R}^f(dy)< \infty.
\end{equation} 
Because in this case and based on \eqref{eq:16052007}, Theorem 2.3 in \cite{rosinski} states that $\phi^f$ really is the L\'{e}vy measure of a $T \alpha S$ distribution on $\R^d$. Actually, by \eqref{eq:16052004} and in view of \cref{15052076}, condition \eqref{eq:16052027} is equivalent to the finiteness of $J_2(f)$. And since $f \in \mathcal{I}(\mathbb{M})$ implies that $J_2(f)< \infty$ due to \cref{16052050}, this completes the proof for $k=1$. In the general case Corollary 4.9 in \cite{integral} states that $(I(f_1),...,I(f_k))^t$ is infinitely-divisible, while a combination of Example 3.7 and Theorem 4.4 in \cite{integral} shows that its LCF is given by
\begin{equation} \label{eq:17052001}
\R^{k \cdot d} \ni (u_1,...,u_k)^t \mapsto \int_S \psi \left( \sum_{j=1}^{k} f_j(s)^{*}u_j \right) \, \nu(ds).
\end{equation}
Define $\psi'((u_1,...,u_k)^t):=\psi(\sum_{j=1}^{k} u_j).$ Then one can verify that $\psi'$ is the LCF of a distribution $\mu'$ on $\R^{k \cdot d}$, which is also $T \alpha S$. Since the corresponding notation is somewhat involved, the details are left to the reader.  Anyway, it follows from \cref{15052002} that there exists an $\R^{k \cdot d}$-valued ISRM, which is generated by $\mu'$ and which we denote by $\mathbb{M}'$. Moreover, define the block diagonal matrices $f'(s):=f_1(s) \oplus \cdots \oplus f_k(s) \in L(\R^{k \cdot d})$ for every $s \in S$. Then, using \eqref{eq:17052001}, it is easy to check that $f' \in \mathcal{I}(\mathbb{M}')$ and that
\begin{equation*}
(I(f_1),...,I(f_k))^t \overset{d}{=} \int_S f'(s) \, \mathbb{M}'(ds),
\end{equation*}
where $\overset{d}{=}$ means equality in distribution. Hence, the argument for $(I(f_1),...,I(f_k))^t$ having a $T \alpha S$ distribution reduces to the case $k=1$ from above.
\end{proof}
If the $T \alpha S$ distributed generator $\mu$ is proper things become more practicable. For this purpose let $\mathcal{L}^{1}(\nu):=\{g:S \rightarrow \R \, | \, \text{$g$ is measurable and $\int |g(s)| \, \nu(ds)< \infty$}  \}$ and recall \cref{15052055} (i).
\begin{cor} \label{16052075}
Assume that $\mu$ is proper. Then we have:
\begin{itemize}
\item[(i)] $\norm{f}^{\alpha} \in \mathcal{L}^{1}(\nu)$ implies that $J_2(f)< \infty$ and, provided that $f \in \mathcal{I}(\mathbb{M})$, also that the $T \alpha S$ random vector $I(f)$ is proper. 
\item[(ii)] Moreover, if $\int_{\norm{x} \ge 1} \norm{x}^2 \, \mathcal{R}(dx)< \infty$, a sufficient condition for $J_2(f)< \infty$ is given by $(\norm{f}^{\alpha} \wedge \norm{f}^{2}) \in \mathcal{L}^{1}(\nu)$.
\end{itemize}
\end{cor}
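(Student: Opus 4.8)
The plan is to reduce both parts, via \cref{15052076}, to verifying that $H(f)<\infty$, and to exploit the submultiplicativity $\norm{f(s)x}\le\norm{f(s)}\,\norm{x}$ together with the properness characterization in \cref{15052055}(i), namely that $\int_{\R^d}\norm{x}^\alpha\,\mathcal{R}(dx)<\infty$. The monotone function $u\mapsto u^\alpha\wedge u^2$ will be the bookkeeping device throughout.

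For part (i), since $\norm{f(s)x}^\alpha\wedge\norm{f(s)x}^2\le\norm{f(s)x}^\alpha\le\norm{f(s)}^\alpha\norm{x}^\alpha$, Tonelli's theorem lets me factorize
\[
H(f)\le\Big(\int_S\norm{f(s)}^\alpha\,\nu(ds)\Big)\Big(\int_{\R^d}\norm{x}^\alpha\,\mathcal{R}(dx)\Big),
\]
which is finite because the first factor is the hypothesis $\norm{f}^\alpha\in\mathcal{L}^{1}(\nu)$ and the second is finite by properness. Hence $H(f)<\infty$, and \cref{15052076} yields $J_2(f)<\infty$. For the second assertion I would assume additionally $f\in\mathcal{I}(\mathbb{M})$, so that by \cref{15052001} the vector $I(f)$ is $T\alpha S$ with Rosinski measure $\mathcal{R}^f$ as in \eqref{eq:16052004}. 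Since $\mathcal{R}^f$ is exactly the image of $\nu\otimes\mathcal{R}$ under $(s,x)\mapsto f(s)x$ and $\norm{\cdot}^\alpha$ vanishes at the origin, the change-of-variables formula gives $\int_{\R^d}\norm{y}^\alpha\,\mathcal{R}^f(dy)=\int_S\int_{\R^d}\norm{f(s)x}^\alpha\,\mathcal{R}(dx)\,\nu(ds)$, and the right-hand side is finite by the very same estimate. A further appeal to \cref{15052055}(i), now applied to $I(f)$, shows that $I(f)$ is proper.

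For part (ii) the key pointwise estimate I would establish is
\[
\norm{Dx}^\alpha\wedge\norm{Dx}^2\le(\norm{D}^\alpha\wedge\norm{D}^2)(\norm{x}^\alpha\vee\norm{x}^2),\qquad D\in L(\R^d),\ x\in\R^d.
\]
This follows from the monotonicity of $u\mapsto u^\alpha\wedge u^2$ (so that $\norm{Dx}^\alpha\wedge\norm{Dx}^2\le(\norm{D}\norm{x})^\alpha\wedge(\norm{D}\norm{x})^2$) and a short case distinction according to whether $\norm{D}$ and $\norm{x}$ lie below or above $1$. Plugging it in and factorizing gives
\[
H(f)\le\Big(\int_S(\norm{f(s)}^\alpha\wedge\norm{f(s)}^2)\,\nu(ds)\Big)\Big(\int_{\R^d}(\norm{x}^\alpha\vee\norm{x}^2)\,\mathcal{R}(dx)\Big).
\]
The first factor is finite by hypothesis, and the second splits as $\int_{\norm{x}\le1}\norm{x}^\alpha\,\mathcal{R}(dx)+\int_{\norm{x}>1}\norm{x}^2\,\mathcal{R}(dx)$, whose summands are controlled by properness and by the extra assumption $\int_{\norm{x}\ge1}\norm{x}^2\,\mathcal{R}(dx)<\infty$, respectively. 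Thus $H(f)<\infty$, i.e.\ $J_2(f)<\infty$ by \cref{15052076}.

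The main obstacle is precisely the bookkeeping in part (ii): the product inequality must be arranged so that the $D$-dependent factor is $\norm{D}^\alpha\wedge\norm{D}^2$ (and not, say, merely $\norm{D}^\alpha$), since only this matches the hypothesis $(\norm{f}^\alpha\wedge\norm{f}^2)\in\mathcal{L}^{1}(\nu)$. Equivalently, one may split $S$ into $\{\norm{f}\le1\}$ and $\{\norm{f}>1\}$ and estimate $H(f)$ separately, using $\norm{Dx}^2\le\norm{D}^2\norm{x}^2$ on the first set (which is where $\int_{\norm{x}\ge1}\norm{x}^2\,\mathcal{R}(dx)<\infty$ becomes indispensable) and $\norm{Dx}^\alpha\le\norm{D}^\alpha\norm{x}^\alpha$ on the second. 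Either route makes transparent why the additional integrability of $\mathcal{R}$ is exactly what allows the hypothesis on $f$ to be relaxed from $\norm{f}^\alpha$ to $\norm{f}^\alpha\wedge\norm{f}^2$.
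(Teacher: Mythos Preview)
Your proposal is correct and follows essentially the same approach as the paper's own proof: both parts are reduced to $H(f)<\infty$ via \cref{15052076}, part (i) uses the crude bound $\norm{f(s)x}^\alpha\wedge\norm{f(s)x}^2\le\norm{f(s)}^\alpha\norm{x}^\alpha$ together with the properness criterion for $\mathcal{R}^f$, and part (ii) rests on the factorization $\norm{f(s)x}^\alpha\wedge\norm{f(s)x}^2\le(\norm{f(s)}^\alpha\wedge\norm{f(s)}^2)(\norm{x}^\alpha\vee\norm{x}^2)$ followed by the split $\int(\norm{x}^\alpha\vee\norm{x}^2)\,\mathcal{R}(dx)=\int_{\norm{x}\le1}\norm{x}^\alpha\,\mathcal{R}(dx)+\int_{\norm{x}>1}\norm{x}^2\,\mathcal{R}(dx)$. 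If anything, you are more explicit than the paper in justifying the pointwise product inequality in part (ii), which the paper simply writes down.
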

\begin{proof}
The fact that $\mu$ is proper yields $\int \norm{x}^{\alpha} \, \mathcal{R}(dx)< \infty$. Recall from \cref{15052076} that $J_2(f)< \infty$ if and only if $H(f)=\int (\norm{y}^{\alpha} \wedge \norm{y}^2) \, \mathcal{R}^f(dy)< \infty$, where $H(f)$ is obviously bounded by
\begin{equation*}
\int_{\R^d} \norm{y}^{\alpha} \, \mathcal{R}^f(dy)=\int_S \int_{\R^d} \norm{f(s)x}^{\alpha} \, \mathcal{R}(dx) \, \nu (ds) \le \int_S \norm{f(s)}^{\alpha} \, \nu(ds)   \int_{\R^d} \norm{x}^{\alpha} \, \mathcal{R}(dx).
\end{equation*}
Hence, (i) follows from \cref{15052001}, since we have that $I(f)$ is proper if and only if $\int \norm{y}^{\alpha} \, \mathcal{R}^f(dy)< \infty$. In a similar way we obtain (ii) due to the following computation:
\begin{align}
H(f) &= \int_S \int_{\R^d} (\norm{f(s)x}^{\alpha} \wedge \norm{f(s)x}^2) \, \mathcal{R}(dx) \, \nu(ds) \notag \\
& \quad \le \int_S (\norm{f(s)}^{\alpha} \wedge \norm{f(s)}^2) \, \nu(ds)  \int_{\R^d} (\norm{x}^{\alpha} \vee \norm{x}^2) \, \mathcal{R}(dx) \notag \\
& \quad = \int_S (\norm{f(s)}^{\alpha} \wedge \norm{f(s)}^2) \, \nu(ds)  \left(   \int_{\norm{x} \le 1 } \norm{x}^{\alpha}  \, \mathcal{R}(dx)  +\int_{\norm{x} \ge 1 } \norm{x}^{2}  \, \mathcal{R}(dx)  \right). \label{eq:21072001}
\end{align}
\end{proof}
Note that the finiteness of $\int_{\norm{x} \ge 1} \norm{x}^2 \, \mathcal{R}(dx)$ is equivalent to the finiteness of $\int_{\norm{x} \ge 1} \norm{x}^2 \, \phi(dx)$ or $\int_{\R^d} \norm{x}^2 \, \mu(dx)$, respectively (see Proposition 2.7 in \cite{rosinski}). 
\subsection{The full and proper-symmetric case} From now on we only consider the case that the generator $\mu$ is full as well as proper and symmetric. In view of \cref{15052055} this implies that $f \in \mathcal{I}(\mathbb{M}) \Leftrightarrow J_2(f)< \infty$ together with $I(f) \sim [0,0, \phi^f]$. In particular, the conditions of \cref{16052075} (i) are fulfilled. The following example fits into this setting and also illustrates part (ii) of \cref{16052075}. 
\begin{example} \label{17052077}
Fix $\lambda>0$ and explicitly consider the proper tempering function $q(r,\theta)=q_{\lambda}(r):=e^{- \lambda r}$ in \eqref{eq:17052010}. Then, if $\sigma$ is symmetric, $\phi=\phi_{\lambda}$ and $\mu_{\lambda} \sim [0,0,\phi_{\lambda}]$ are also symmetric. Moreover, using the construction from (2.3)-(2.5) in \cite{rosinski}, we observe that the corresponding Rosinski measure $\mathcal{R}_{\lambda}$ is concentrated on the set $\lambda \cdot S^{d-1}$, where it equals the image measure $\lambda (\sigma)$, defined by $\lambda (\sigma)(A):=\sigma(\lambda^{-1} \cdot A)$ (cf. Example 1 in \cite{rosinski}). It follows that
\begin{equation*}
\int_{\norm{x} \ge 1} \norm{x}^2 \, \mathcal{R}_{\lambda}(dx) = \int_{S^{d-1}} \norm{\lambda \theta}^2 \, \sigma(d \theta) = \lambda^2 \sigma(S^{d-1})<\infty.
\end{equation*}
Thus, in view of \cref{16052075} (ii), $(\norm{f}^{\alpha} \wedge \norm{f}^{2}) \in \mathcal{L}^{1}(\nu)$ implies that $f \in \mathcal{I}(\mathbb{M}_{\lambda})$, where $\mathbb{M}_{\lambda}$ denotes the ISRM that is generated by $\mu_{\lambda}$. In a similar way it follows from \cref{15052076} that $\mathcal{I}(\mathbb{M}_{\lambda})$ does not depend on $\lambda>0$.
\end{example}
We remark that the previous example will be very important later. Now consider $f,f_1,f_2,...\in \mathcal{I}(\mathbb{M})$ and observe that $I(f_n)-I(f)=I(f_n-f) \sim [0,0,\phi^{f_n-f}]$ in the present case. Hence, as an alternative to criterion \eqref{eq:19052001}, we have that $I(f_n) \rightarrow I(f)$ in probability if and only if the convergence $\phi^{f_n-f} \rightarrow 0$ in the space $\mathcal{M}$ (that is the space of all measures on $\R^d$ that assign finite measure to sets bounded away from zero) holds true. That means $\phi^{f_n-f}(A) \rightarrow 0$ for all $A \in \B(\R^d)$ bounded away from zero. See Theorem 3.1.16 in \cite{thebook} for details. Lemma 2.1 in \cite{integral} shows that this is also  equivalent to $J_2(f_n-f) \rightarrow 0$. On the other hand, relation \eqref{eq:15052097} from the proof of \cref{15052076} revealed that 
\begin{equation} \label{eq:21052011}
C_1 H(f)  \le J_2(f) \le C_2 H(f) 
\end{equation}
holds true for all measurable $f$. Hence, as $n \rightarrow \infty$ we have that
\begin{equation} \label{eq:22052047}
I(f_n) \rightarrow I(f) \quad \text{in probability} \quad \quad \Leftrightarrow \quad H(f_n-f) \rightarrow 0.
\end{equation}
Similarly to (2.13) in \cite{multi} this motivates to introduce
\begin{equation*}
h(D): =\int_{\R^d} (\norm{D x}^{\alpha} \wedge \norm{D x}^2) \, \mathcal{R}(dx), \quad D \in L(\R^d)
\end{equation*}
together with
\begin{equation*}
H(f, \delta):= \int_S h(\delta^{-1} f(s))\, \nu (ds) \in [0,\infty], \quad \delta>0.
\end{equation*}
Recall that $\mathcal{R}$ is $\mathcal{M}$-full due to \cref{15052055}. Then we obtain the following observation. 
\begin{lemma} \label{21052002}
Under the previous assumptions there exists a constant $K>0$ such that $(\norm{D}^{\alpha} \wedge \norm{D}^{2} ) \le K \, h(D)$ holds true for every $D \in L(\R^d)$. 
\end{lemma}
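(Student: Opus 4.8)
The plan is to exploit that $h$ is a continuous, strictly positive function on the \emph{compact} operator unit sphere, and then to transfer the resulting uniform lower bound to an arbitrary $D$ by a scaling inequality. Throughout I would use that $\mathcal{R}$ is $\mathcal{M}$-full (by \cref{15052055}) and that $L(\R^d)$ is finite-dimensional.

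First I would record that $h$ is finite and continuous on all of $L(\R^d)$. Both follow from the elementary estimate $(ct)^{\alpha}\wedge(ct)^2\le(c^{\alpha}\vee c^2)(t^{\alpha}\wedge t^2)$ for $c,t\ge0$: taking $c=\norm{D}$, $t=\norm{x}$ and using $\norm{Dx}\le\norm{D}\norm{x}$ gives $\norm{Dx}^{\alpha}\wedge\norm{Dx}^2\le(\norm{D}^{\alpha}\vee\norm{D}^2)(\norm{x}^{\alpha}\wedge\norm{x}^2)$, whose right-hand side is $\mathcal{R}$-integrable by the defining property $\int(\norm{x}^2\wedge\norm{x}^{\alpha})\,\mathcal{R}(dx)<\infty$. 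Hence $h(D)<\infty$. For $D_n\to D$ the integrands converge pointwise (continuity of $t\mapsto t^{\alpha}\wedge t^2$ and of $x\mapsto\norm{D_nx}$) and, on a bounded neighborhood, are dominated by a fixed $\mathcal{R}$-integrable function via the same estimate, so dominated convergence yields continuity of $h$.

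Next I would show $h(D)>0$ whenever $D\ne0$. Since $\norm{Dx}^{\alpha}\wedge\norm{Dx}^2$ is nonnegative and vanishes exactly on $\ker D$, the equality $h(D)=0$ would force $\mathcal{R}$ to be concentrated on $\ker D$. But for $D\ne0$ the kernel is contained in some $(d-1)$-dimensional subspace, contradicting the $\mathcal{M}$-fullness of $\mathcal{R}$. Because $\{D:\norm{D}=1\}$ is compact in $L(\R^d)$ and $h$ is continuous and strictly positive there, it attains a minimum $m>0$, so $h(E)\ge m$ for every $E$ with $\norm{E}=1$.

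Finally I would remove the normalisation. Writing $D=cE$ with $c=\norm{D}$ and $\norm{E}=1$, the key inequality is $(cy)^{\alpha}\wedge(cy)^2\ge(c^{\alpha}\wedge c^2)(y^{\alpha}\wedge y^2)$ for all $c,y\ge0$, verified by a short case analysis according to whether $c$ and $y$ exceed $1$. Applying it pointwise with $y=\norm{Ex}$ and integrating gives $h(D)\ge(\norm{D}^{\alpha}\wedge\norm{D}^2)\,h(E)\ge m\,(\norm{D}^{\alpha}\wedge\norm{D}^2)$, so the assertion holds with $K=m^{-1}$ (the case $D=0$ being trivial). The only genuinely delicate point is the strict positivity on the sphere: this is precisely where $\mathcal{M}$-fullness is indispensable, since otherwise $\mathcal{R}$ could be supported in the kernel of some unit-norm operator and make $h$ vanish there; the two scalar inequalities are then routine.
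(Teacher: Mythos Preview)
Your argument is correct. The continuity of $h$ via dominated convergence, the strict positivity of $h$ on nonzero operators (using that $h(D)=0$ forces $\mathcal{R}$ to live in $\ker D$, which for $D\ne 0$ lies in a proper subspace, contradicting $\mathcal{M}$-fullness), the extraction of a uniform minimum $m>0$ on the compact operator unit sphere, and the scaling estimate $(cy)^{\alpha}\wedge(cy)^2\ge(c^{\alpha}\wedge c^2)(y^{\alpha}\wedge y^2)$ all check out; the latter is indeed a four-case verification using $0<\alpha<2$.

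Your route, however, is genuinely different from the paper's. The paper proceeds \emph{coordinatewise}: for each index pair $(i,j)$ it isolates a set $F_j=\{ze_j:|z|\ge\sqrt{c_j}\}$ with $0<\mathcal{R}(F_j)<\infty$ and then estimates $|D_{i,j}|^{\alpha}\wedge D_{i,j}^2\le\norm{De_j}^{\alpha}\wedge\norm{De_j}^2$ from above by $(c_j\mathcal{R}(F_j))^{-1}h(D)$, finally invoking equivalence of norms on $L(\R^d)$. This gives a constant that is, at least in principle, explicit in terms of the measure $\mathcal{R}$. Your compactness argument, by contrast, is nonconstructive but arguably more robust: it only uses that $\mathcal{R}$ is not supported on the kernel of any nonzero operator, which is exactly $\mathcal{M}$-fullness, and sidesteps the coordinate-direction bookkeeping entirely. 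Either approach is perfectly adequate for the lemma's role in the paper (completeness of $\mathcal{I}(\mathbb{M})$ in \cref{16072090}), where only the existence of some $K$ matters.
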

\begin{proof}
Fix $1 \le i,j \le d$. Denote the corresponding entry of the matrix $D$ by $D_{i,j}$ and let $e_j$ be the $j$-th unit vector in $\R^d$. Since $\mathcal{R}$ is $\mathcal{M}$-full, we have that $\mathcal{R}(\{ z e_j : z \ne 0\})>0$. In particular, using that $\mathcal{R}$ is a L\'{e}vy measure, there exists some $0<c_j \le 1$ such that $F_j:=\{ z e_j: |z| \ge  \sqrt{c_j} \}$ fulfills $0<\mathcal{R}(F_j)< \infty$. In view of $c_j^{1/ \alpha} < \sqrt{c_j}$ it follows that 
\begin{align*}
|D_{i,j}|^{\alpha} \wedge D_{i,j}^{2}   & \le \norm{D e_j}^{\alpha} \wedge \norm{D e_j}^{2}  \\
& = (c_j \mathcal{R}(F_j))^{-1} \int ( \norm{c_j^{1/ \alpha} D e_j}^{\alpha} \wedge  \norm{\sqrt{c_j}  D e_j}^{2} ) \indikator{F_j}{x} \, \mathcal{R}(dx) \\
& \le (c_j \mathcal{R}(F_j))^{-1} \int ( \norm{D x }^{\alpha} \wedge  \norm{  D x}^{2} ) \indikator{F_j}{x} \, \mathcal{R}(dx)  \le  (c_j \mathcal{R}(F_j))^{-1} h(D).
\end{align*}
Note that $i$ as well as $j$ were arbitrary and that all norms on $L(\R^d)$ are equivalent. Hence, up to a positive constant, $K$ can be chosen as $\max(\{(c_j \mathcal{R}(F_j))^{-1} : 1 \le j \le d\})$.
\end{proof}
We call a mapping $\norm{\cdot}_{\mathbb{M}}: \mathcal{I}(\mathbb{M}) \rightarrow [0,\infty)$ a \textit{quasi-norm} (on $\mathcal{I}(\mathbb{M})$) if it has the usual properties of a norm, except a possible weakening of the triangular inequality. That is, there exists some $A \ge 1$ such that 
\begin{equation}  \label{eq:21052003}
\forall f_1,f_2 \in \mathcal{I}(\mathbb{M}): \quad \norm{f_1+f_2}_{\mathbb{M}} \le A (\norm{f_1}_{\mathbb{M}}+\norm{f_2}_{\mathbb{M}})
\end{equation}
holds true. Recall that we identify elements in $ \mathcal{I}(\mathbb{M})$ that are identical $\nu$-a.e. 
\begin{theorem} \label{16072090}
\begin{itemize}
\item[(a)] The following identities hold true: 
\begin{align}
\mathcal{I}(\mathbb{M})  &= \{f:S \rightarrow \Li{d} \, | \, \text{$f$ is measurable and $H(f,\delta)< \infty$ for all $\delta>0$} \}  \notag \\
&= \{f:S \rightarrow \Li{d} \, | \, \text{$f$ is measurable and $H(f,\delta)< \infty$ for some $\delta>0$} \}. \notag
\end{align}
\item[(b)] $\norm{f}_{\mathbb{M}}:= \inf \{\delta>0: H(f,\delta) \le 1 \}$ defines a quasi-norm on $\mathcal{I}(\mathbb{M})$.
Moreover, for every $f \in \mathcal{I}(\mathbb{M})$ and $\delta>0 $, we have
\begin{equation} \label{eq:21052006} 
 \left (\frac{\norm{f}_{\mathbb{M}}}{\delta} \right)^{\alpha}  \wedge \left (\frac{\norm{f}_{\mathbb{M}}}{\delta} \right)^2 \le  H(f, \delta) \le  \left (\frac{\norm{f}_{\mathbb{M}}}{\delta} \right)^{\alpha}  \vee \left (\frac{\norm{f}_{\mathbb{M}}}{\delta} \right)^2.
\end{equation}
\item[(c)] There exists some $L_1 \ge 1$ such that, for every $f \in \mathcal{I}(\mathbb{M})$ and $\delta>0$, we have
\begin{equation} \label{eq:22052001} 
\Pro(\norm{I(f)} \ge \delta) \le L_1 \, H(f,\delta).
\end{equation}
Additionally, if $0< p<\alpha$, there exists a constant $L_2=L_2(p)>0$ such that
\begin{equation} \label{eq:21052096}
\erwartung{\norm{I(f)}^p} \le L_2 \norm{f}_{\mathbb{M}}^{p}.
\end{equation}
\item[(d)] The vector space $\mathcal{I}(\mathbb{M})$ is complete with respect to $\norm{\cdot}_{\mathbb{M}}$ and, for $f,f_1,... \in \mathcal{I}(\mathbb{M})$, we have the characterization
\begin{equation*} 
I(f_n) \rightarrow I(f) \text{ in probability } \quad \Leftrightarrow \quad  \norm{f_n-f}_{\mathbb{M}} \rightarrow 0.
\end{equation*}
\end{itemize} 
\end{theorem}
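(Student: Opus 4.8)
\emph{Plan.} The engine for all four parts is the elementary two-sided bound for the modular $z\mapsto z^{\alpha}\wedge z^{2}$: for every $c,z>0$ one has $(c^{\alpha}\wedge c^{2})(z^{\alpha}\wedge z^{2})\le (cz)^{\alpha}\wedge(cz)^{2}\le(c^{\alpha}\vee c^{2})(z^{\alpha}\wedge z^{2})$, which is verified by separating the cases $z\le1$ and $z\ge1$. Substituting $z=\norm{f(s)x}$, $c=\delta'/\delta$ and integrating against $\mathcal{R}(dx)\,\nu(ds)$ turns this into the scaling sandwich
\begin{equation*}
\Big(\big(\tfrac{\delta'}{\delta}\big)^{\alpha}\wedge\big(\tfrac{\delta'}{\delta}\big)^{2}\Big)H(f,\delta')\le H(f,\delta)\le\Big(\big(\tfrac{\delta'}{\delta}\big)^{\alpha}\vee\big(\tfrac{\delta'}{\delta}\big)^{2}\Big)H(f,\delta'),
\end{equation*}
valid for all $\delta,\delta'>0$. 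Part (a) is then immediate: taking $\delta'=1$ shows that $H(f,\delta)<\infty$ for one $\delta$ iff $H(f)=H(f,1)<\infty$ iff it holds for all $\delta$, and by \cref{15052076} together with \eqref{eq:21052011} this is equivalent to $J_{2}(f)<\infty$, i.e.\ to $f\in\mathcal{I}(\mathbb{M})$ under the standing full/proper/symmetric assumption.

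For (b) I would first record that, for fixed $f\in\mathcal{I}(\mathbb{M})$, the map $\delta\mapsto H(f,\delta)$ is non-increasing and, by dominated convergence, continuous on $(0,\infty)$; the scaling sandwich forces $H(f,\delta)\to0$ as $\delta\to\infty$ and (for $f\ne0$) $H(f,\delta)\to\infty$ as $\delta\downarrow0$. Hence $H(f,\norm{f}_{\mathbb{M}})=1$, and inserting $\delta'=\norm{f}_{\mathbb{M}}$ into the sandwich yields exactly \eqref{eq:21052006}. Positive homogeneity follows from $H(cf,\delta)=H(f,\delta/|c|)$, while definiteness uses \cref{21052002}: $\norm{f}_{\mathbb{M}}=0$ forces $H(f)=0$, whence $\norm{f(s)}^{\alpha}\wedge\norm{f(s)}^{2}=0$ $\nu$-a.e. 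The quasi-triangle inequality is the only genuinely new point: from $\norm{(f_{1}+f_{2})(s)x}\le\norm{f_{1}(s)x}+\norm{f_{2}(s)x}$ and an elementary quasi-subadditivity estimate $(a+b)^{\alpha}\wedge(a+b)^{2}\le C_{0}((a^{\alpha}\wedge a^{2})+(b^{\alpha}\wedge b^{2}))$ one gets $H(f_{1}+f_{2},\delta)\le C_{0}(H(f_{1},\delta)+H(f_{2},\delta))$; evaluating at $\delta=\norm{f_{1}}_{\mathbb{M}}+\norm{f_{2}}_{\mathbb{M}}$, where each summand is $\le1$ by \eqref{eq:21052006}, and then inverting the lower bound of \eqref{eq:21052006} gives \eqref{eq:21052003} with $A=(2C_{0})^{1/\alpha}$.

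Part (c) rests on a classical infinite-divisibility tail estimate. By linearity $\delta^{-1}I(f)=I(\delta^{-1}f)$, so $\Pro(\norm{I(f)}\ge\delta)=\Pro(\norm{I(\delta^{-1}f)}\ge1)$ and $H(\delta^{-1}f,1)=H(f,\delta)$; it therefore suffices to bound $\Pro(\norm{I(g)}\ge1)$ by a multiple of $J_{2}(g)\le C_{2}H(g)$. Writing the symmetric law $I(g)\sim[0,0,\phi^{g}]$ as a sum of two independent infinitely divisible vectors carrying $\phi^{g}$ restricted to $\{\norm{y}\le1\}$ and $\{\norm{y}>1\}$, I would apply Chebyshev to the mean-zero, finite-variance small-jump part, whose variance is $\int_{\norm{y}\le1}\norm{y}^{2}\,\phi^{g}(dy)$, and bound the large-jump part by $\Pro(\text{at least one jump})\le\phi^{g}(\{\norm{y}>1\})$, giving $\Pro(\norm{I(g)}\ge1)\le C\int(1\wedge\norm{y}^{2})\,\phi^{g}(dy)=C\,J_{2}(g)$. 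The moment bound then follows from the layer-cake formula $\erwartung{\norm{I(f)}^{p}}=\int_{0}^{\infty}pt^{p-1}\Pro(\norm{I(f)}\ge t)\,dt$: split at $t=\norm{f}_{\mathbb{M}}$, use $\Pro\le1$ below and $\Pro\le L_{1}H(f,t)\le L_{1}(\norm{f}_{\mathbb{M}}/t)^{\alpha}$ above (by \eqref{eq:22052001} and \eqref{eq:21052006}); the upper integral converges precisely because $p<\alpha$, yielding \eqref{eq:21052096}.

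Finally, for (d) the convergence equivalence is just \eqref{eq:21052006} at $\delta=1$ combined with \eqref{eq:22052047}, since $x^{\alpha}\wedge x^{2}\to0$ iff $x\to0$. Completeness is where I expect the real work. Given a $\norm{\cdot}_{\mathbb{M}}$-Cauchy sequence $(f_{n})$, I would pass to a subsequence with $\norm{f_{n_{k+1}}-f_{n_{k}}}_{\mathbb{M}}\le2^{-k}$; then \cref{21052002} and \eqref{eq:21052006} give $\int_{S}(\norm{(f_{n_{k+1}}-f_{n_{k}})(s)}^{\alpha}\wedge\norm{(f_{n_{k+1}}-f_{n_{k}})(s)}^{2})\,\nu(ds)\le K\,2^{-k\alpha}$, which is summable, so $(f_{n_{k}}(s))_{k}$ converges $\nu$-a.e.\ to a measurable limit $f(s)$. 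A Fatou argument bounds $H(f_{n_{k}}-f,1)$ by $\liminf_{j}H(f_{n_{k}}-f_{n_{j}},1)$, which is small for large $k$; hence $\norm{f_{n_{k}}-f}_{\mathbb{M}}\to0$, $f\in\mathcal{I}(\mathbb{M})$ by part (a) and linearity, and the quasi-triangle inequality upgrades this to $\norm{f_{n}-f}_{\mathbb{M}}\to0$. The two places demanding the most care are the infinitely-divisible tail estimate in (c) and this a.e.-extraction/Fatou argument for completeness in (d).
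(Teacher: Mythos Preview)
Your proposal is correct and, for parts (a), (b) and (d), follows essentially the same line as the paper: the scaling sandwich for $H(f,\delta)$, the continuity argument giving $H(f,\norm{f}_{\mathbb{M}})=1$, \cref{21052002} for definiteness, and the subsequence/Borel--Cantelli/Fatou scheme for completeness all match. Two minor remarks on (d): from summability of $\int(\norm{\cdot}^{\alpha}\wedge\norm{\cdot}^{2})\,d\nu$ you still need a Markov/Borel--Cantelli step (as the paper does with the sets $A_j$) to upgrade to a.e.\ summability of the \emph{norms} $\norm{f_{n_{k+1}}(s)-f_{n_k}(s)}$ themselves, not just of their squares; your sketch glosses over this but it is routine.

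There are two places where your route genuinely differs from the paper's. For the quasi-triangle inequality in (b), the paper passes through $J_2$ via \eqref{eq:21052011}, using the sub-additivity of $z\mapsto 1\wedge z$ to get $H(f_1+f_2,\delta)\le 2C_2/C_1\,(H(f_1,\delta)+H(f_2,\delta))$, and then chooses $A$ so that $H(f,A\delta)\le(4C_2/C_1)^{-1}H(f,\delta)$; you instead appeal directly to the elementary quasi-subadditivity of $z\mapsto z^{\alpha}\wedge z^{2}$ and invert \eqref{eq:21052006}. Both work and give comparable constants; your version is slightly more self-contained. More substantially, for the tail bound in (c) the paper uses a Fourier-analytic estimate (Lemma~2.7 in \cite{multi}) of the form $\Pro(\norm{I(f)}\ge\delta)\le C\delta^{d}\int_{\norm{u}_{\infty}\le\delta^{-1}}\int_S|\psi(f(s)^{*}u)|\,\nu(ds)\,du$ and then bounds $|\psi|$ pointwise by $2(1\wedge\norm{u}^{2}\norm{f(s)x}^{2})$, arriving at $J_2(\delta^{-1}f)$. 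Your probabilistic splitting of $\phi^{g}$ into small and large jumps, with Chebyshev on the bounded-support piece and $\Pro(N\ge1)\le\phi^{g}(\{\norm{y}>1\})$ on the compound-Poisson piece, is an equally valid and arguably more transparent route that avoids the external lemma; just make the union bound $\Pro(\norm{X_1+X_2}\ge1)\le\Pro(\norm{X_1}\ge1/2)+\Pro(\norm{X_2}\ge1/2)$ explicit. The moment bound via layer-cake is identical in both.
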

\begin{proof}[\bf{Proof of \cref{16072090}}]  \label{proof3}
Note that $H(f)=H(f,1)$ and verify that part (a) follows from \cref{15052076} after a verification of
\begin{equation} \label{eq:21052001}
\forall \delta>0: \quad (\delta^{- \alpha} \wedge \delta^{-2}) H(f,1) \le H(f,\delta) =H (\delta^{-1} f, 1)\le (\delta^{- \alpha} \vee \delta^{-2}) H(f,1).
\end{equation}
We now prove part (b). Obviously, $f = 0$ implies that $H(f,\delta)=0$ for all $\delta>0$, which yields $\norm{f}_{\mathbb{M}}=0$. Conversely, if $\norm{f}_{\mathbb{M}}=0$, we derive from \eqref{eq:21052001} that $H(f,1)=0$, i.e. $h(f(s))=0$ for almost every $s \in S$. In view of \cref{21052002} we obtain for those $s$ that $f(s)=0$. Also note that the we have $H( \rho f, \delta)=H( f, |\rho|^{-1} \delta)$ for all $\rho \ne 0$ and $\delta>0$. Hence, the homogeneity property $\norm{ \rho f}_{\mathbb{M}}= |\rho| \, \norm{ f}_{\mathbb{M}}$ ($\rho \in \R$) holds true by definition of $\norm{\cdot}_{\mathbb{M}}$. It remains to prove \eqref{eq:21052003}. For this purpose recall that $\norm{u_1+u_2}^2 \le 2(\norm{u_1}^2+\norm{u_2}^2)$ ($u_1, u_2 \in \R^d$) and that the mapping $[0,\infty) \ni z \mapsto(1 \wedge z)$ is sub-additive. Using \eqref{eq:21052011} we compute for all $\delta>0$ and $f_1, f_2 \in \mathcal{I}(\mathbb{M})$ that
\begin{align*}
H(f_1+f_2,\delta) & \le C_1^{-1} J_2(\delta^{-1}(f_1+f_2)) \\
& \le 2 C_1^{-1} (J_2(\delta^{-1}f_1)+J_2(\delta^{-1}f_2))  \le 2 C_2/C_1 (H(f_1,\delta)+H(f_2,\delta)).
\end{align*}
Now choose $A\ge 1$ such that $H( f, A \delta) \le (4 C_2 / C_1)^{-1} H(f, \delta)$ holds true for every $\delta>0$ and $f$ as before, which is possible due to \eqref{eq:21052001}. Since $H(f, \cdot)$ is decreasing, it follows for $\eps>0$ arbitrary that 
\begin{align*}
& H(f_1+f_2,A(\norm{f_1}_{\mathbb{M}}  +  \norm{f_2}_{\mathbb{M}}+ \eps))  \\
& \quad \le 2 C_2/C_1 ( H(f_1, A (\norm{f_1}_{\mathbb{M}}+\norm{f_2}_{\mathbb{M}} +\eps)) + H(f_2, A (\norm{f_1}_{\mathbb{M}}+\norm{f_2}_{\mathbb{M}} +\eps)) ) \\
&  \quad \le 2 C_2/C_1 ( H(f_1, A (\norm{f_1}_{\mathbb{M}} +\eps)) + H(f_2, A (\norm{f_2}_{\mathbb{M}} +\eps)) ) \\
& \quad \le 2^{-1} (H(f_1,\norm{f_1}_{\mathbb{M}} +\eps )  + H(f_2,\norm{f_2}_{\mathbb{M}} +\eps )  ) \le 1,
\end{align*}
where in the last step we used the definition of $\norm{\cdot}_{\mathbb{M}}$. Letting $\eps \rightarrow 0$, the same argument shows \eqref{eq:21052003} and hence that $\norm{\cdot}_{\mathbb{M}}$ is a quasi-norm. Finally, \eqref{eq:21052006} is obvious in the case $\norm{f}_{\mathbb{M}}=0$. Else we observe that $H(f, \norm{f}_{\mathbb{M}})=1$ by continuity of $\delta \mapsto H(f,\delta)$. Then, similar as before, we can write $H(f,\delta)=H( (\delta/ \norm{f}_{\mathbb{M}})^{-1} f, \norm{f}_{\mathbb{M}})$ such that \eqref{eq:21052006} follows from \eqref{eq:21052001}.  \\
For the proof of part (c) recall that $|1-\exp(z)| \le |z|$ for any $z \in \C$ with $\text{Re }z \le 0$ and that the LCF of $I(f)$ is given by \eqref{eq:17052001} for $k=1$. Combine this with Lemma 2.7 in \cite{multi} to obtain a constant $C>0$ such that, for every $\delta>0$, the relation
\begin{equation} \label{eq:22052056}
\Pro(\norm{I(f)} \ge \delta ) \le C \delta^{d} \int_{\norm{u}_{\infty} \le \delta^{-1}} \int_S | \psi(f(s)^{*}u )| \, \nu(ds) \, du
\end{equation}
holds true. Here $\norm{\cdot}_{\infty}$ denotes the maximum norm on $\R^d$. At the same time there exists some $C_0 \ge 1$ fulfilling $\norm{\cdot} \le C_0 \norm{\cdot}_{\infty}$. Also recall \eqref{eq:14052005} and, for every $z \in \R$, that $|\cos(z)-1| \le 2(1 \wedge z^2)$. From the Cauchy-Schwarz inequality it follows for every $\norm{u}_{\infty} \le \delta^{-1}$ that
\begin{align}
\int_S | \psi(f(s)^{*}u)| \, \nu(s) & \le \int_S \int_{\R^d} |\cos \skp{f(s)^{*}u}{x} -1| \, \phi(dx) \, \nu(s)   \notag   \\
& \le 2 \int_S \int_{\R^d} (1 \wedge \norm{u}^2 \norm{f(s)x}^2) \, \phi(dx)  \, \nu(s) \label{eq:21072077}  \\
& \le 2 \, C_0^2 \int_S \int_{\R^d} (1 \wedge \norm{\delta^{-1}f(s)x}^2) \, \phi(dx) \, \nu(s) = 2 \, C_0^2 \, J_2(\delta^{-1} f). \notag
\end{align}
Let $L_1:=2  C_0^2 \, C \, C_2$ to verify that \eqref{eq:21052011} together with \eqref{eq:22052056} implies \eqref{eq:22052001}. For the second statement of part (c) fix $0<p< \alpha$. Then it is well-known that
\begin{align*}
\erwartung{\norm{I(f)}^p} = p \integral{0}{\infty}{ \delta^{p-1} \, \Pro(\norm{I(f)} \ge \delta) }{d \delta},
\end{align*}
where \eqref{eq:21052006}-\eqref{eq:22052001} provide the inequality 
\begin{equation*}
\Pro(\norm{I(f)} \ge \delta) \le \indikator{[0,\norm{f}_{\mathbb{M}}] }{\delta}  +  L_1 ( \norm{f}_{\mathbb{M}}/ \delta)^{\alpha}  \indikator{(\norm{f}_{\mathbb{M}},\infty ) }{\delta}, \quad  \delta>0.
\end{equation*}
Now, as in the proof of Theorem 2.8 in \cite{multi}, we observe that \eqref{eq:21052096} holds true for a constant $L_2>0$. The details are left to the reader. In order to show that $\mathcal{I}(\mathbb{M})$ is complete, let $(f_n) \subset \mathcal{I}(\mathbb{M})$ be a Cauchy-sequence with respect to $\norm{\cdot}_{\mathbb{M}}$. Then we have to find some $f \in \mathcal{I}(\mathbb{M})$ such that $\norm{f_n-f}_{\mathbb{M}} \rightarrow 0$ as $n \rightarrow \infty$, or at least along a suitable subsequence, since $(f_n)$ is Cauchy and since $\norm{\cdot}_{\mathbb{M}}$ fulfills \eqref{eq:21052003}. Hence, without loss of generality and throughout following the idea of the proof of Theorem 5.2.1 in \cite{Dud02}, it can be assumed that 
\begin{equation} \label{eq:10101803}
\norm{f_m-f_n}_{\mathbb{M}} \le 2^{-N} \quad \text{for all $m,n \ge N$}.
\end{equation}
Recall the definition of $H(\cdot,1)$ and define the sets $A_j:= \{s \in S:  h(f_{j+1}(s)-f_j(s))> j^{-4}  \}$ for all $j \in \N$. Using \eqref{eq:21052006} and \eqref{eq:10101803}, it follows that
\begin{equation*}
j^{-4} \nu(A_n) \le  H(f_{j+1}-f_{j},1) \le  2^{- \alpha j}, \quad j \in \N,
\end{equation*}
which implies that $\summezwei{j=1}{\infty} \nu(A_j)<\infty$. Thus $B:= \limsup_{j \rightarrow \infty} A_j$ is a $\nu$-null set. Let $f(s):=0$ for every $s \in B$. Conversely, for $s \in B^c=S \setminus B$, there exists some $N(s) \in \N$ fulfilling $s \notin A_j$ for all $j \ge N(s)$. Furthermore, \cref{21052002} allows us to assume that $N(s)$ is chosen large enough such that we have $\norm{f_{j+1}(s)-f_j(s)} \le 1$ for every $j \ge N(s)$. Actually, if we use \cref{21052002} again, this implies for all $s \in B^c$ and $m,n \ge N(s)$ that
\allowdisplaybreaks
\begin{align*}
\norm{f_m(s)-f_n(s)} & \le \summe{j=N(s)}{\infty}  \sqrt{ \norm{(f_{j+1}(s)-f_{j}(s))}^2  }\\ 
&=  \summe{j=N(s)}{\infty}  \sqrt{ \norm{(f_{j+1}(s)-f_{j}(s))}^{\alpha} \wedge \norm{(f_{j+1}(s)-f_{j}(s))}^2  }\\
& \le \sqrt{K} \summe{j=N(s)}{\infty}  \sqrt{  h(f_{j+1}(s)-f_{j}(s))   }  \le \sqrt{K} \summe{j=N(s)}{\infty}  j^{-2} .
\end{align*}
It follows that $\norm{f_m(s)-f_n(s)}  \rightarrow 0$ as $N(s)$ tends to $\infty$, which shows that $(f_n(s))$ is Cauchy with respect to the operator norm $\norm{\cdot}$. Denote the corresponding limit by $f(s)$ and observe that $f:S \rightarrow L(\R^d)$ is measurable with $f_n(s) \rightarrow f(s)$ $\nu$-almost everywhere as $n \rightarrow \infty$. Moreover, \eqref{eq:21052003} and \eqref{eq:10101803} imply for every $n \in \N$ that $\norm{f_n}_{\mathbb{M}} \le A(\norm{f_1}_{\mathbb{M}}+2^{-1}).$ Also note that $D \mapsto h(D)$ is continuous by the dominated convergence theorem. Hence, Fatou's Lemma and \eqref{eq:21052006} yield that
\allowdisplaybreaks 
\begin{align*}
H(f,1) = \int_S \liminf_{n \rightarrow \infty}  h(f_n(s)) \, \nu(ds) \le  \limesinf{n}{\infty} H(f_n,1) \le   \limesinf{n}{\infty} ( \norm{f_n}_{\mathbb{M}}^{\alpha} \vee \norm{f_n}_{\mathbb{M}}^2 ) < \infty,
\end{align*}
i.e. $f$ belongs to $\mathcal{I}(\mathbb{M})$ due to part (a). Similarly, it follows from \eqref{eq:10101803} for every $n \in \N$ that 
\begin{equation*}
H(f_n-f,1)  \le  \limesinf{m}{\infty} ( \norm{f_n-f_m}_{\mathbb{M}}^{\alpha} \vee \norm{f_m-f_m}_{\mathbb{M}}^2 )  \le  \,  2^{- \alpha n}.
\end{equation*}
In view of \eqref{eq:21052006} this shows that $\norm{f_n-f}_{\mathbb{M}} \rightarrow 0$ as $n \rightarrow \infty$. The additional statement of part (d) is just a consequence of \eqref{eq:22052047} and \eqref{eq:21052006}.
\end{proof}
\begin{remark}
Since $I(f)$ has a $T \alpha S$ distribution it is well-known that $\erwartung{\norm{I(f)}^{p}}$ is finite for all $0<p< \alpha$. However, this can also be true for $p \ge \alpha$, depending on the behavior of $\mathcal{R}^{f}$ (see Proposition 2.7 in \cite{rosinski}). But in this case we would need sharper estimates in order to perform an appropriate proof of \eqref{eq:21052096}.
\end{remark}
We finish this section with a useful observation, which, for $D=f(s)$, we implicitly encountered within the foregoing proofs. More precisely, recall \eqref{eq:14052002} and \eqref{eq:21072077} together with the proofs of \cref{15052076} and \cref{16052075} (particularly \eqref{eq:21072001}) as well as \cref{17052077}. 
\begin{cor} \label{21072080}
In the situation of \cref{17052077} we denote the LCF of $\mu_{\lambda}$ by $\psi_{\lambda}$. Then there exists a constant $T>0$ such that
\begin{equation*}
 |\psi_{\lambda}(D^{*}u)|  \le T (1+ \norm{u}^2) (\norm{D}^{\alpha} \wedge \norm{D}^2)
\end{equation*}
holds true for every $D \in L(\R^d)$ and $u \in \R^d$.
\end{cor}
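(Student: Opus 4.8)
The plan is to replay, for a single matrix $D$ in place of a kernel $f$, the chain of estimates already carried out in the proof of part (c) of \cref{16072090}, and then to insert the finiteness computation from \cref{17052077}. First I would exploit symmetry: since $\mu_\lambda$ is symmetric, \eqref{eq:14052005} gives $\psi_\lambda(D^{*}u)=\int_{\R^d}(\cos\skp{D^{*}u}{x}-1)\,\phi_\lambda(dx)$, and using $\skp{D^{*}u}{x}=\skp{u}{Dx}$ together with the pointwise bound $|\cos z-1|\le 2(1\wedge z^2)$ and the Cauchy--Schwarz inequality $\skp{u}{Dx}^2\le\norm{u}^2\norm{Dx}^2$ yields
\[
|\psi_\lambda(D^{*}u)|\le 2\int_{\R^d}\bigl(1\wedge\norm{u}^2\norm{Dx}^2\bigr)\,\phi_\lambda(dx),
\]
which is precisely the single-matrix instance of the bound \eqref{eq:21072077}.

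Next I would peel off the dependence on $u$ by the elementary inequality $1\wedge(at)\le(1\vee a)(1\wedge t)\le(1+a)(1\wedge t)$, valid for all $a,t\ge 0$ (checked by distinguishing $t\le 1$ and $t>1$). Applied with $a=\norm{u}^2$ and $t=\norm{Dx}^2$ this gives $|\psi_\lambda(D^{*}u)|\le 2(1+\norm{u}^2)\int_{\R^d}(1\wedge\norm{Dx}^2)\,\phi_\lambda(dx)$. The remaining integral is the single-matrix form of $J_2$, and by \eqref{eq:14052002}, exactly as in the proof of \cref{15052076}, it equals $\int_{\R^d} g(\norm{Dx}^{-1})\,\mathcal{R}_\lambda(dx)$; hence the pointwise estimate \eqref{eq:15052097} (equivalently \eqref{eq:21052011}) bounds it by $C_2\,h(D)$, where $h(D)=\int_{\R^d}(\norm{Dx}^\alpha\wedge\norm{Dx}^2)\,\mathcal{R}_\lambda(dx)$.

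Finally I would bound $h(D)$ by a constant multiple of $\norm{D}^\alpha\wedge\norm{D}^2$. Submultiplicativity $\norm{Dx}\le\norm{D}\norm{x}$ together with the elementary inequality $\norm{Dx}^\alpha\wedge\norm{Dx}^2\le(\norm{D}^\alpha\wedge\norm{D}^2)(\norm{x}^\alpha\vee\norm{x}^2)$ (the same step used in \eqref{eq:21072001}) gives $h(D)\le(\norm{D}^\alpha\wedge\norm{D}^2)\int_{\R^d}(\norm{x}^\alpha\vee\norm{x}^2)\,\mathcal{R}_\lambda(dx)$. By \cref{17052077} the measure $\mathcal{R}_\lambda$ is concentrated on $\lambda\cdot S^{d-1}$, so the last integral equals $(\lambda^\alpha\vee\lambda^2)\,\sigma(S^{d-1})<\infty$. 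Collecting the constants, one may take $T=2C_2(\lambda^\alpha\vee\lambda^2)\,\sigma(S^{d-1})$.

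The bookkeeping is routine; the only points needing genuine care are the two min/max inequalities. The harder of the two is $\norm{Dx}^\alpha\wedge\norm{Dx}^2\le(\norm{D}^\alpha\wedge\norm{D}^2)(\norm{x}^\alpha\vee\norm{x}^2)$, which I would verify by a case analysis according to whether $\norm{D}$, $\norm{x}$ and their product lie below or above $1$, crucially using $0<\alpha<2$ so that the minimum and maximum switch branches at $1$. Everything else reduces to estimates already established in the excerpt.
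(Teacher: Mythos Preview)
Your proof is correct and follows essentially the same route the paper indicates: it combines the cosine bound \eqref{eq:21072077}, the comparison \eqref{eq:15052097}/\eqref{eq:21052011} from the proof of \cref{15052076}, the min/max estimate \eqref{eq:21072001} from the proof of \cref{16052075}, and the finiteness computation in \cref{17052077}. The only small addition is your explicit inequality $1\wedge(at)\le(1+a)(1\wedge t)$ to extract the factor $(1+\norm{u}^2)$, which the paper leaves implicit; your case analysis for the bound $\norm{Dx}^\alpha\wedge\norm{Dx}^2\le(\norm{D}^\alpha\wedge\norm{D}^2)(\norm{x}^\alpha\vee\norm{x}^2)$ is exactly what is needed.
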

\section{Examples of multivariate tempered stable random fields: Moving-average and harmonizable representation} \label{chapter3}
Throughout this section fix $\lambda>0$ and, for the rest of this paper, we explicitly consider the $n$-dimensional Lebesgue measure $\nu(ds)=ds$ on $(S, \Sigma)=(\R^n, \B(\R^n))$. Then, until further notice (in \cref{subsec}), let $\mu_{\lambda}$ and $\mathbb{M}_{\lambda}$ be as in \cref{17052077}, where $\sigma$ is a finite, symmetric measure on $S^{d-1}$. In addition, we assume that $\sigma$ is $\mathcal{M}$-full, since this ensures that the generator $\mu_{\lambda}$ is full.  
\begin{example}
Consider the case $n=1$ and let $X(t):=\mathbb{M}_{\lambda}([0,t])$ for every $t \ge 0$. If we recall $(RM_1)$-$(RM_2)$ it is easy to see that the resulting $\R^d$-valued stochastic process $\mathbb{X}=\{X(t): t \ge 0 \}$ is a L\'{e}vy process. And since $X(1)$ has a $T \alpha S$ distribution it follows that $\mathbb{X}$ is a \textit{$T \alpha S$ L\'{e}vy process} in the sense of \cite{rosinski}.
\end{example}
Note that there are interesting results in \cite{rosinski} concerning the so-called \textit{short time} and \textit{long time behavior} of $T \alpha S$ L\'{e}vy processes, respectively. However, in what follows, we treat the general case $n \ge 1$. More precisely and in order to take advantage of our previous efforts, we will investigate $\R^d$-valued random fields of the form $\mathbb{X}=\{X(t): t \in \R^n\}$, where $X(t)=I(f_t)$ for every $t \in \R^n$ and a suitable integrand (or kernel function) $f_t \in \mathcal{I}(\mathbb{M}_{\lambda})$. In this context, note that we benefit from \cref{15052001} in the following sense, where $0< \alpha<2$ is still fixed.
\begin{defi} \label{14072001}
A random field $\mathbb{X}=\{X(t): t \in \R^n\}$ is called \textit{tempered ($\alpha$-)stable} (abbreviated as a \textit{$T \alpha S$ random field}) if all finite-dimensional distributions are $T \alpha S$.
\end{defi}
\subsection{Moving-average representation}
Our multivariate approach needs some preparation. Fix some matrix $E \in Q(\R^n)$, where 
\begin{equation*}
Q(\R^n):=\{E \in L(\R^n): \text{All real parts of the eigenvalues of $E$ are strictly positive}  \}.
\end{equation*}
Recall that $\exp(E)$, i.e. the \textit{matrix exponential} of $E$ is given by
\begin{equation*}
\exp(E)=\sum_{j=0}^{\infty} \frac{E^j}{j!} \quad \text{with} \quad c^E:= \exp( (\log c) E), \quad c>0.
\end{equation*}
For instance, see Proposition 2.2.2 in \cite{thebook} for basic properties about the matrix exponential that we will use in sequel. This allows to define so-called \textit{generalized polar coordinates with respect to $E$}. More precisely, Lemma 6.1.5 in \cite{thebook} states that there exists a norm $\norm{\cdot}_E$ on $\R^n$ such that for $S_E^{n-1}:= \{x \in \R^n: \norm{x}_E=1 \}$ the mapping $\Psi: (0, \infty) \times S_E^{n-1} \rightarrow \R^n \setminus \{0\}$, $\Psi(c,\theta)=c^E \theta$ is a homeomorphism. In other words, every $x \ne 0$ can be uniquely written as $x= \tau(x)^{E} l(x)$ for some \textit{radial part} $\tau(x)>0$ and some \textit{direction} $l(x) \in S_E^{n-1}. $ In this context, a function $\varphi: \R^n \rightarrow \C$ is called \textit{$E$-homogeneous} if $\varphi(c^E x)=c \, \varphi (x)$ for all $c>0$ and $x \ne 0$. On the other hand, letting $\beta>0$, the authors in \cite{bms} say that a function $\varphi: \R^n \rightarrow [0, \infty)$ is \textit{$(\beta,E)$-admissible} if the following conditions hold true:
\begin{itemize}
\item[(i)] $\varphi(x)>0$ for all $x \ne 0$.
\item[(ii)] $\varphi$ is continuous and for any $0<A<B$ there exists a constant $C>0$ such that, for $A \le \norm{y} \le B$,
\begin{equation} \label{eq:15072065}
\tau(x) \le 1 \quad \Rightarrow \quad | \varphi(x+y) - \varphi(y)| \le C \tau(x)^{\beta}.
\end{equation}
Here, $\tau(x)=\tau_E(x)$ is the radial part of $x$ in terms of the generalized polar coordinates with respect to $E$.
\end{itemize}
Now we are able to state the first main result of this section, where the two last-mentioned properties are actually combined. See Theorem 2.11 and Corollary 2.12 in \cite{bms} for useful examples of such functions. Also recall that $I=I_d$ is the identity operator on $\R^d$.
\begin{theorem} \label{15072001} 
Let $E$ be as before and denote its trace by $q$. Assume that $\varphi: \R^n \rightarrow [0,\infty)$ is an $E$-homogeneous, $(\beta,E)$-admissible function for some $\beta>0$. Moreover, consider a $d \times d$-matrix $D \in Q(\R^d)$ such that the maximum of the real parts of the eigenvalues of $D$, denoted by $H$, fulfills 
\begin{equation} \label{eq:14072005}
H < \beta + q \left ( \frac{1}{\alpha} - \frac{1}{2} \right).
\end{equation}
Then the random field $\mathbb{X}=\{X(t): t \in \R^n\}$, defined by
\begin{equation*}
X(t):= \int_{\R^n} \left[  \varphi(t-s)^{D- \frac{q}{\alpha}  I} - \varphi(-s)^{D-  \frac{q}{\alpha} I}   \right] \, \mathbb{M}_{\lambda}(ds), 
\end{equation*}
exists. Furthermore:
\begin{itemize}
\item[(a)] $\mathbb{X}$ is a $T \alpha S$ random field in the sense of \cref{14072001}.
\item[(b)] $\mathbb{X}$ is stochastically continuous.
\item[(c)] $\mathbb{X}$ has stationary increments. That is, for any $h \in \R^n$,
\begin{equation*}
\{X(t+h)-X(h): t \in \R^n\} \overset{fdd}{=}  \{X(t): t \in \R^n\}  ,
\end{equation*}
where $\overset{fdd}{=}$ denotes equality of all finite-dimensional distributions. 
\item[(d)] If $ \frac{q}{\alpha}$ is not an eigenvalue of $D$, then $X(t)$ is full for every $t \ne 0$.
\end{itemize}
\end{theorem}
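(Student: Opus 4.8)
The plan is to first prove that the integrand belongs to $\mathcal{I}(\mathbb{M}_{\lambda})$ and then dispatch (a)--(d) one at a time. Throughout, abbreviate $A := D - \frac{q}{\alpha} I \in L(\R^d)$, so that the integrand is $f_t(s) = \varphi(t-s)^{A} - \varphi(-s)^{A}$, and let $a_{-}$, $a_{+}$ denote the smallest and largest real parts of the eigenvalues of $A$; note $a_{+}=H-\frac{q}{\alpha}$ and $\alpha a_{-}+q = \alpha\cdot(\text{smallest real part of an eigenvalue of }D)>0$ because $D\in Q(\R^d)$. By \cref{17052077} it suffices to verify $(\norm{f_t}^{\alpha}\wedge\norm{f_t}^{2})\in\mathcal{L}^{1}(\nu)$, i.e.\ $H(f_t,1)<\infty$. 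I would compute this integral in the generalized polar coordinates of Lemma~6.1.5 in \cite{thebook}, where $ds$ decomposes as $c^{q-1}\,dc$ times an angular measure on $S_E^{n-1}$, and repeatedly use three facts about the matrix exponential (Proposition~2.2.2 in \cite{thebook}): the scalar rule $(ab)^{A}=a^{A}b^{A}$ for $a,b>0$, the growth bounds $\norm{c^{A}}\le C_{\eps}\,c^{a_{+}+\eps}$ for $c\ge 1$ and $\norm{c^{A}}\le C_{\eps}\,c^{a_{-}-\eps}$ for $c\le 1$, and the $E$-homogeneity $\varphi(x)=\tau(x)\,\varphi(l(x))$ with $\varphi(l(x))$ bounded above and below on the compact sphere $S_E^{n-1}$.

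The integrability then splits into three regions. Near the singularities $s=t$ and $s=0$ one term stays bounded while the other behaves like $\tau(\cdot)^{a_{-}}$; since $\alpha a_{-}+q>0$, the local integral $\int_0 c^{\alpha a_{-}-\alpha\eps}c^{q-1}\,dc$ converges automatically from $D\in Q(\R^d)$. The decisive region is the behavior at infinity, and this is where I expect the main obstacle to lie and where hypothesis \eqref{eq:14072005} enters. Writing $s=\tau(s)^{E}l(s)$ for large $\tau(s)$, homogeneity factors out $\tau(s)^{A}$:
\begin{equation*}
f_t(s)=\tau(s)^{A}\bigl[\varphi(\tau(s)^{-E}t-l(s))^{A}-\varphi(-l(s))^{A}\bigr].
\end{equation*}
As $\tau(s)\to\infty$ the two scalar arguments lie in a fixed compact subset of $(0,\infty)$, on which $c\mapsto c^{A}$ is Lipschitz, so the bracket is controlled by the scalar difference $|\varphi(\tau(s)^{-E}t-l(s))-\varphi(-l(s))|$; the $(\beta,E)$-admissibility \eqref{eq:15072065} (applied with the small radial part $\tau(\tau(s)^{-E}t)=\tau(s)^{-1}\tau(t)$) bounds this by $C\,\tau(s)^{-\beta}$. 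Combined with $\norm{\tau(s)^{A}}\le C_\eps\tau(s)^{a_{+}+\eps}$ this gives $\norm{f_t(s)}\le C\,\tau(s)^{H-q/\alpha-\beta+\eps}$, and the tail integral of its square, $\int^{\infty}\tau^{2(H-q/\alpha-\beta)+2\eps+q-1}\,d\tau$, converges precisely when $H<\beta+q(\tfrac1\alpha-\tfrac12)$, i.e.\ \eqref{eq:14072005}, upon choosing $\eps$ small. This proves existence.

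Parts (a) and (c) are then quick. For (a), every finite-dimensional vector $(X(t_1),\dots,X(t_k))^t=(I(f_{t_1}),\dots,I(f_{t_k}))^t$ is $T\alpha S$ by \cref{15052001}, which is exactly \cref{14072001}. For (c) I would use linearity and the change of variables $s\mapsto s+h$: a direct computation shows $f_{t_j+h}(s+h)-f_h(s+h)=f_{t_j}(s)$ (the two copies of $\varphi(-s-h)^{A}$ cancel), so by the joint-LCF formula \eqref{eq:17052001} and translation invariance of Lebesgue measure,
\begin{equation*}
\int_{\R^n}\psi_{\lambda}\Bigl(\textstyle\sum_{j}(f_{t_j+h}(s)-f_h(s))^{*}u_j\Bigr)\,ds=\int_{\R^n}\psi_{\lambda}\Bigl(\textstyle\sum_{j}f_{t_j}(s)^{*}u_j\Bigr)\,ds,
\end{equation*}
which is the equality of all finite-dimensional distributions. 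For (b), by \cref{16072090}(d) stochastic continuity is equivalent to $\norm{f_{t_n}-f_t}_{\mathbb{M}}\to 0$, and since $f_{t_n}(s)-f_t(s)=\varphi(t_n-s)^{A}-\varphi(t-s)^{A}$ depends only on $t_n-s$ and $t-s$, translation invariance gives $H(f_{t_n}-f_t,1)=H(f_{t_n-t},1)$. Thus it remains to show $H(f_h,1)\to 0$ as $h\to 0$, which I would do by a three-region estimate: on $\norm{s}>R$ the uniform admissibility bound above makes the tail small for large $R$; on a middle annulus the integrand tends to $0$ pointwise and is dominated by a fixed continuous function (dominated convergence); and on a small ball $B(0,\rho_0)$ containing both collapsing singularities one uses the subadditivity $H(g_1+g_2,1)\le (2C_2/C_1)(H(g_1,1)+H(g_2,1))$ from the proof of \cref{16072090} together with the uniform smallness of the singular integrals $\int_{B(0,2\rho_0)}\norm{\varphi(-s)^{A}}^{\alpha}\,ds\to 0$. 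The collapsing singularity at $s=h$ is the technically delicate point here, handled by translation invariance of these singular integrals.

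Finally, for (d) I would argue by contradiction using \cref{15052055}(ii): $X(t)=I(f_t)$ fails to be full iff its Rosinski measure $\mathcal{R}^{f_t}$ from \eqref{eq:16052004} is concentrated on some hyperplane $w^{\perp}$, and since $\mathcal{R}_{\lambda}$ is $\mathcal{M}$-full this forces $f_t(s)^{*}w=0$ for $\nu$-a.e.\ $s$, hence for all $s\ne 0,t$ by continuity. The hypothesis that $\frac{q}{\alpha}$ is not an eigenvalue of $D$ means exactly that $A=D-\frac{q}{\alpha}I$ is invertible. Evaluating along $s=t-r^{E}\theta$ with $r\downarrow 0$ and using homogeneity, the relation $f_t(s)^{*}w=0$ reads $r^{A^{*}}\varphi(\theta)^{A^{*}}w=\varphi(-t+r^{E}\theta)^{A^{*}}w$, whose right-hand side converges to the nonzero vector $L:=\varphi(-t)^{A^{*}}w$ (nonzero since $A^*$ invertible and $w\ne 0$). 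Setting $v:=\varphi(\theta)^{A^{*}}w$ and $\eta=\log r\to-\infty$, we get $e^{\eta A^{*}}v\to L$. But if such a limit exists then for every $h$ one has $e^{hA^{*}}L=\lim_{\eta\to-\infty}e^{(\eta+h)A^{*}}v=L$, so differentiating at $h=0$ gives $A^{*}L=0$ and hence $L=0$ by invertibility of $A^{*}$ — a contradiction. Therefore no such $w$ exists and $X(t)$ is full for every $t\ne 0$.
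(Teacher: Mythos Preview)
Your argument for existence and for parts (a)--(c) is correct and matches the paper's approach: the paper likewise reduces to $(\norm{f_t}^{\alpha}\wedge\norm{f_t}^{2})\in\mathcal{L}^{1}(ds)$ via \cref{17052077}, splits into a bounded region (handled using only $D\in Q(\R^d)$) and a tail region (where admissibility plus \eqref{eq:14072005} give the $\norm{f_t}^2$-integrability), and cites Theorem~2.5 of \cite{lixiao} for the concrete estimates that you spell out by hand. For (b) the paper also reduces to $H(f_h,1)\to 0$ by the same change of variables and finishes by dominated convergence against the tail/singularity bounds already obtained; your three-region variant is a harmless reorganization of the same idea. Parts (a) and (c) are handled identically.

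Part (d) is where you genuinely diverge. The paper simply invokes two external facts: Remark~2.1 in \cite{lixiao}, which asserts that when $q/\alpha$ is not an eigenvalue of $D$ the matrix $f_t(s)$ is invertible on a set of positive Lebesgue measure, and Proposition~2.6(a) in \cite{paper2}, which turns invertibility of the kernel on a positive-measure set into fullness of the integral. Your route is instead a self-contained contradiction argument: pass from non-fullness to $f_t(s)^{*}w=0$ for a.e.\ $s$ via the Rosinski measure \eqref{eq:16052004} and $\mathcal{M}$-fullness of $\mathcal{R}_{\lambda}$, then probe along $s=t-r^{E}\theta$ to force $r^{A^{*}}v\to L\ne 0$ as $r\downarrow 0$ and conclude $A^{*}L=0$ from the semigroup trick $e^{hA^{*}}L=L$. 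This is correct and pleasantly elementary; it avoids citing \cite{lixiao,paper2} at the cost of a short extra computation, and in fact makes transparent exactly how the eigenvalue hypothesis (invertibility of $A$) is used. The paper's version is shorter only because the work is hidden in the references.
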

\begin{proof}
Note that $X(0)=0$ a.s. Then fix $t \in \R^n \setminus \{0\}$ and observe that $\varphi(x)=0 \Leftrightarrow x=0$. It follows that
\begin{equation} \label{eq:15072001}
f_t(s):=\varphi(t-s)^{D- \frac{q}{\alpha}  I} - \varphi(-s)^{D-  \frac{q}{\alpha} I} , \quad s \in \R^n
\end{equation}
is well-defined for every $s \notin \{0,t\}$. However, the cases $s \in \{0,t\}$ are negligible since, up to a constant, the $n$-dimensional Lebesgue measure equals the control measure of $\mathbb{M}_{\lambda}$  in the present case. Then it suffices to verify that $(\norm{f_t(s)}^{\alpha} \wedge \norm{f_t(s)}^{2}) \in \mathcal{L}^{1}(ds)$ due to \cref{17052077}. For this purpose denote the generalized polar coordinates with respect to $E$ by $(\tau(\cdot),l(\cdot))$ and let $\tau(0)=0$. Recall the proof of Theorem 2.5 in \cite{lixiao} and that, if $H< \beta$ holds true instead of \eqref{eq:14072005}, there it has been shown for some suitably chosen $\gamma >0$ that 
\begin{equation*}
\norm{f_t(s)}^{\alpha} = \norm{f_t(s)}^{\alpha}\indikatorzwei{ \{ \tau(s) \le \gamma \} }   +\norm{f_t(s)}^{\alpha} \indikatorzwei{ \{  \tau(s) >\gamma  \} }  \, \,  \in \mathcal{L}^{1}(ds).
\end{equation*}
Actually, for the verification of $\norm{f_t(s)}^{\alpha} \indikatorzwei{ \{\tau(s) \le \gamma\} } \in \mathcal{L}^{1}(ds)$ the quoted proof only makes use of the assumptions on $\varphi$ together with $D \in Q(\R^d)$. Conversely, it uses $H< \beta$ in order to establish the accuracy of $\norm{f_t(s)}^{\alpha} \indikatorzwei{ \{\tau(s)>\gamma\} } \in \mathcal{L}^{1}(ds)$. Now it is easy to verify that condition \eqref{eq:14072005} allows us to slightly modify the corresponding estimates from the proof of Theorem 2.5 in \cite{lixiao} and hence to obtain that $\norm{f_t(s)}^2 \indikatorzwei{ \{\tau(s)>\gamma\} } \in \mathcal{L}^{1}(ds)$ holds true. The details are left to the reader.  Anyway, in view of
\begin{equation} \label{eq:14072076}
(\norm{f_t(s)}^{\alpha} \wedge \norm{f_t(s)}^{2}) \le \norm{f_t(s)}^{\alpha} \indikatorzwei{  \{ \tau(s) \le \gamma \}  } +\norm{f_t(s)}^{2} \indikatorzwei{  \{ \tau(s) > \gamma \}  }, \quad s \in \R^n,
\end{equation}
this proves the existence of $\mathbb{X}$. Then part (a) follows from \cref{15052001}. Now let us prove part (b). For this purpose fix $t \in \R^n$ and observe that, due to \eqref{eq:22052047}, we have to show that $H(f_{t+h}-f_t) \rightarrow 0$ as $h \rightarrow 0$. Hence, if we recall \cref{17052077} together with \eqref{eq:21072001},
the claimed stochastic continuity would follow if
\begin{equation*} 
\int_{\R^n}  \left (\norm{f_{t+h}(s)-f_t(s)}^{\alpha} \wedge \norm{f_{t+h}(s)-f_t(s)}^{2}  \right )     \, ds \rightarrow 0 
\end{equation*}
holds true as $h \rightarrow 0$. Thus by \eqref{eq:15072001}, a change of variables and \eqref{eq:14072076}, it suffices to prove that
\begin{equation} \label{eq:14072056}
\int_{\R^n}  \left (\norm{f_{h}(s)}^{\alpha} \indikatorzwei{  \{ \tau(s) \le \gamma \}  }  + \norm{f_h(s)}^{2} \indikatorzwei{  \{ \tau(s) > \gamma \}  }  \right )     \, ds \rightarrow 0 \quad (h \rightarrow 0),
\end{equation}
actually leading back to the above-mentioned modification and its corresponding estimates. We omit the details, which, using the dominated convergence theorem, can be found in the proof of Theorem 2.5 in \cite{lixiao} again and which eventually give \eqref{eq:14072056}. Moreover, by linearity of the stochastic integral and \eqref{eq:17052001}, the proof of part (c) merely reduces to the foregoing change of variables. Finally, fix $t \ne 0$ and recall from Remark 2.1 in \cite{lixiao} that $f_t(s)$ is invertible for $s$ in a subset of $\R^n$ with positive Lebesgue measure provided that $ \frac{q}{\alpha}$ is not an eigenvalue of $D$. Then the assertion of part (d) follows from Proposition 2.6 (a) in \cite{paper2}. 
\end{proof}
\begin{remark}
Assume that \eqref{eq:14072005} is particularly fulfilled with $H < \beta$. Then the foregoing proof showed that the function $\norm{f_t(s)}^{\alpha}$ from \eqref{eq:15072001} belongs to $\mathcal{L}^{1}(ds)$. In this case \cref{16052075} (a) implies that $X(t)$ has even a proper $T \alpha S$ distribution.
\end{remark}
\subsection{Harmonizable representation} \label{subsec}
Although \cref{15072001} provides a large class of multivariate tempered stable random fields by now, we want to present a further class of this type in the sequel. The resulting random fields use a so-called \textit{harmonizable representation}, which is also popular in the context of classical $\alpha$-stable settings, that is without the use of tempering. For instance see \cite{bms,lixiao,SaTaq94}, just to mention a few. It is also mentionable that, by investigating the scalar-valued $\alpha$-stable moving-average and harmonizable representation from \cite{bms},  Proposition 6.1 in \cite{BiLa09} already established different path properties between both representations. Note that \cref{21072001} as well as \cref{16072088} below will emphasize these differences.  \\
In what follows we have to use kernel functions taking values in $L(\C^d)$, that is the space of $d \times d$-matrices with entries from the complex numbers $\C$. Moreover, this also requires the use of a complex-valued tempered stable ISRM. Essentially, we do so by 
modifying the underlying generator. More precisely, let $\tilde{\sigma}$ be a symmetric and finite as well as $\mathcal{M}$-full measure on $S^{2d-1}=\{x \in \R^{2d}: \norm{x}=1 \}$. For $\lambda>0$ as before consider the tempering function $q(r, \theta)=q_{\lambda}(r)=e^{- \lambda r}$ again, but based on the polar coordinates in $\R^{2d}$ now. Accordingly to \eqref{eq:17052010} this allows to define a L\'{e}vy measure $\tilde{\phi}_{\lambda}$ on $\R^{2d}$. Let $\tilde{\mu}_{\lambda} \sim [0,0, \tilde{\phi}_{\lambda}]$ with LCF $\tilde{\psi}_{\lambda}$ be the generator of an $\R^{2d}$-valued ISRM in the sense of \cref{15052002} (still with $\nu(ds)=ds$ on $(S, \Sigma)=(\R^n, \B(\R^n))$), denoted by $\widetilde{\mathbb{M}}_{\lambda}$. \\
Note that $\widetilde{\mathbb{M}}_{\lambda}$ can be naturally identified with an $\C^d$-valued ISRM, which we denote by $\mathbb{M}^{\C}_{\lambda}$ hereinafter and which, similarly as before, allows for the use as integrator in a corresponding integration theory. We refer the reader to \cite{integral} for details. In any case, Proposition 4.11 in \cite{integral} states for a measurable function $g:\R^n \rightarrow L(\C^d)$ that the $\R^d$-valued random vector
\begin{equation} \label{eq:15072075}
\text{Re }  \int_{\R^n} g(s) \, \mathbb{M}^{\C}_{\lambda}(ds)
\end{equation}
is well-defined (as a stochastic limit) if and only if  
\begin{equation} \label{eq:150720100}
\tilde{g}:=\begin{pmatrix} \text{Re } g & - \text{Im } g\\ 0 & 0  \end{pmatrix} \in \mathcal{I}(\widetilde{\mathbb{M}}_{\lambda}),
\end{equation}
where $\tilde{g} \in L(\R^{2d})$. Then we can state the second main result of this section. 
\begin{theorem} \label{15012005}
Let $E \in Q(\R^n)$, denoting its trace by $q$ again and its smallest real part of the eigenvalues by $a$. Assume $\varphi: \R^n \rightarrow [0,\infty)$ to be a continuous and $E^{*}$-homogeneous function fulfilling $\varphi(x)>0$ for all $x \in \R^n \setminus \{0\}$. Also consider $D \in L(\R^d)$, where $h$ and $H$ denote the minimum and the maximum of the real parts of the eigenvalues of $D$, respectively. Moreover, let the assumption
\begin{equation} \label{eq:14072099}
 q \left( \frac{1}{2} - \frac{1}{\alpha} \right)<h  \le H < a
\end{equation}
hold true. Then the random field $\mathbb{Y}=\{Y(t): t \in \R^n\}$, defined by
\begin{equation} \label{eq:17072001}
Y(t):= \text{Re }  \int_{\R^n} \left(e^{ \im \skp{t}{s}} -1 \right) \varphi(s)^{-D- \frac{q}{\alpha} I}  \, \mathbb{M}^{\C}_{\lambda}(ds), 
\end{equation}
exists in the sense of \eqref{eq:15072075}. Furthermore:
\begin{itemize}
\item[(a)] $\mathbb{Y}$ is tempered stable.
\item[(b)] $\mathbb{Y}$ is stochastically continuous.
\item[(c)] $Y(t)$ is full for every $t \ne 0$.
\end{itemize}
\end{theorem}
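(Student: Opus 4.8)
The plan is to follow the blueprint of \cref{15072001}, carried out in the real picture on $\R^{2d}$ via the identification \eqref{eq:150720100}. Writing $g_t(s):=(e^{\im\skp{t}{s}}-1)\,\varphi(s)^{-D-\frac q\alpha I}$ and letting $\tilde g_t$ be its real companion from \eqref{eq:150720100}, the random vector $Y(t)$ exists iff $\tilde g_t\in\mathcal{I}(\widetilde{\mathbb{M}}_\lambda)$; since $\widetilde{\mathbb{M}}_\lambda$ is generated by an $\mathcal{M}$-full, symmetric, proper $T\alpha S$ law on $\R^{2d}$, \cref{17052077} (applied in dimension $2d$) reduces this to $(\norm{\tilde g_t}^\alpha\wedge\norm{\tilde g_t}^2)\in\mathcal{L}^1(ds)$, and because $\norm{\tilde g_t(s)}$ is comparable up to fixed constants with the operator norm of the complex matrix $g_t(s)$, it suffices to prove $\int_{\R^n}(\norm{g_t(s)}^\alpha\wedge\norm{g_t(s)}^2)\,ds<\infty$. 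First I would introduce generalized polar coordinates $(\tau,l)=(\tau(s),l(s))$ with respect to $E^{*}$, so that $E^{*}$-homogeneity gives $\varphi(s)=\tau\,\varphi(l)$ with $\varphi(l)$ bounded away from $0$ and $\infty$ on the compact sphere $S_{E^{*}}^{n-1}$. Standard spectral bounds for matrix exponentials then furnish, for every small $\delta>0$, a constant $C$ with $\norm{\varphi(s)^{-D-\frac q\alpha I}}\le C\,\tau^{-\frac q\alpha-H-\delta}$ for $\tau\le1$ and $\le C\,\tau^{-\frac q\alpha-h+\delta}$ for $\tau\ge1$, while $\norm{s}=\norm{\tau^{E^{*}}l}\le C\,\tau^{a-\delta}$ for $\tau\le1$.

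The integral splits at $\tau=1$ and is taken against the polar Jacobian $\tau^{q-1}\,d\tau$ (recall $q=\operatorname{trace}E^{*}$), using on each region the favourable branch of the minimum. For $\tau\ge1$ one has $\norm{g_t(s)}\le2C\,\tau^{-\frac q\alpha-h+\delta}$, which is $\le1$ for large $\tau$; bounding $\norm{g_t}^\alpha\wedge\norm{g_t}^2$ by $\norm{g_t}^2$ there, the tail $\int^\infty\tau^{-2(\frac q\alpha+h-\delta)+q-1}\,d\tau$ converges exactly when $\frac q\alpha+h>\frac q2$, i.e.\ the left inequality in \eqref{eq:14072099}. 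For $\tau\le1$ I would instead use $|e^{\im\skp{t}{s}}-1|\le\norm{t}\,\norm{s}\le C\,\tau^{a-\delta}$, so that $\norm{g_t(s)}\le C\,\tau^{a-\frac q\alpha-H-2\delta}$, and bound $\norm{g_t}^\alpha\wedge\norm{g_t}^2$ by $\norm{g_t}^\alpha$; then $\int_0\tau^{\alpha(a-\frac q\alpha-H)-2\alpha\delta+q-1}\,d\tau$ converges precisely when $H<a$, the right inequality in \eqref{eq:14072099}. The essential role of the tempering is that it is the quadratic branch $\norm{\cdot}^2$, absent in the purely stable theory, which permits the weak lower bound $h>q(\frac12-\frac1\alpha)$ in the far field. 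I expect the near-origin estimate, balancing the vanishing order $\tau^{a}$ of the harmonic factor against the kernel singularity $\tau^{-q/\alpha-H}$, to be the main technical point, though it runs parallel to the proof of Theorem 2.5 in \cite{lixiao}, to which I would defer the routine details.

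For the remaining assertions, part (a) follows from \cref{15052001} applied to $\widetilde{\mathbb{M}}_\lambda$: the finite-dimensional vector $(Y(t_1),\dots,Y(t_k))^t$ is a coordinate projection of the jointly $T\alpha S$ vector $(\int\tilde g_{t_1}\,d\widetilde{\mathbb{M}}_\lambda,\dots,\int\tilde g_{t_k}\,d\widetilde{\mathbb{M}}_\lambda)^t$ (whose lower blocks vanish almost surely), and the $T\alpha S$ class is closed under linear maps, cf.\ \eqref{eq:16052004}. For part (b), the identity $g_{t+h}(s)-g_t(s)=e^{\im\skp{t}{s}}(e^{\im\skp{h}{s}}-1)\varphi(s)^{-D-\frac q\alpha I}$ gives $\norm{g_{t+h}(s)-g_t(s)}=\norm{g_h(s)}$, so by \eqref{eq:22052047} stochastic continuity reduces to $\int(\norm{g_h}^\alpha\wedge\norm{g_h}^2)\,ds\to0$ as $h\to0$; this follows from dominated convergence, the integrand tending to $0$ pointwise while the bounds of the existence step (taken with $\norm{h}\le1$) supply an $h$-independent integrable majorant. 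Finally, for part (c) I would observe that $\varphi(s)^{-D-\frac q\alpha I}$ is invertible for every $s\ne0$, being a matrix exponential, and that $(e^{\im\skp{t}{s}}-1)\ne0$ for Lebesgue-a.e.\ $s$ whenever $t\ne0$; hence $g_t(s)\in\GL(\C^d)$ on a set of full measure, and the fullness of $Y(t)$ then follows from the complex-valued analogue of Proposition 2.6 in \cite{paper2}, exactly as in the argument for \cref{15072001}(d).
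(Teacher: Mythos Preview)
Your argument is correct and, for the existence step and part~(a), essentially identical to the paper's: both split the integral at $\tau=1$ in $E^{*}$-polar coordinates, use the $\alpha$-branch near the origin (requiring $H<a$) and the quadratic branch at infinity (requiring $h>q(\tfrac12-\tfrac1\alpha)$), and both obtain the $T\alpha S$ property of the finite-dimensional distributions by viewing $(Y(t_1),\dots,Y(t_k))^t$ as a projection of $(\int\tilde g_{t_1}\,d\widetilde{\mathbb M}_\lambda,\dots)^t$. One small correction: the relevant reference in \cite{lixiao} for the harmonizable kernel is Theorem~2.6 there, not Theorem~2.5.

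The genuine difference is in part~(b). You exploit the unitarity of multiplication by $e^{\im\skp{t}{s}}$ to get $\norm{g_{t+h}(s)-g_t(s)}=\norm{g_h(s)}$ directly, and then invoke the $H$-criterion \eqref{eq:22052047} plus dominated convergence; this is clean and short. The paper instead computes the LCF of $Y(t+h)-Y(t)$ explicitly, introducing the rotation matrices
\[
A(t,s)=\begin{pmatrix}\cos\skp{t}{s}\,I_d & \sin\skp{t}{s}\,I_d\\ -\sin\skp{t}{s}\,I_d & \cos\skp{t}{s}\,I_d\end{pmatrix},
\]
writing the argument of $\tilde\psi_\lambda$ as $(\tilde g_h(s)A(t,s))^{*}(u,v)^t$, and then bounding via \cref{21072080} together with $\norm{A(t,s)}=1$. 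Your route is more economical for (b) alone; the paper's explicit LCF/rotation-matrix formula, however, is not wasted effort: it is reused verbatim to prove stationarity of increments in \cref{16072088} (where one needs $\tilde\psi_\lambda(A^{*}w)=\tilde\psi_\lambda(w)$) and again in the tangent-field result \cref{03082077}. For part~(c) both proofs simply defer to the argument of Proposition~5.2(c) in \cite{paper2}; your observation that $g_t(s)\in\GL(\C^d)$ a.e.\ is the right input, though note that the real companion $\tilde g_t$ is never invertible, so the fullness argument must indeed go through the complex formulation as you indicate rather than through \cref{15072001}(d) literally.
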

\begin{proof} 
Denote the generalized polar coordinates with respect to $E^t$ by $(\tau_1(\cdot),l_1(\cdot))$ and observe that Lemma 4.2 in \cite{lixiao}, although stated for $D \in Q(\R^d)$, remains true for $D \in L(\R^d)$ due to (2.2) in \cite{paper2}. Then, using the assumption $H<a$, it follows exactly as in the proof of Theorem 2.6 in \cite{lixiao} that $\norm{s}^{\alpha}  \norm{\varphi(s)^{-D- \frac{q}{\alpha} I }}^{\alpha} \indikatorzwei{ \{\tau_1(s) < 1\} }  \in \mathcal{L}^1(ds)$. Conversely, assuming that $h>0$ (which is nothing else than $D \in Q(\R^d))$, the aforementioned proof in \cite{lixiao} showed that $ \norm{\varphi(s)^{-D- \frac{q}{\alpha} I }}^{\alpha} \indikatorzwei{ \{\tau_1(s) \ge  1\} }  \in \mathcal{L}^{1}(ds)$. Actually, by a slight modification it turns out that we just need the assumption $h>q ( \frac{1}{2} - \frac{1}{\alpha})$ from \eqref{eq:14072099} in order to establish that $ \norm{\varphi(s)^{-D- \frac{q}{\alpha} I }}^{2} \indikatorzwei{ \{\tau_1(s) \ge  1\} }  \in \mathcal{L}^{1}(ds)$ instead. If we combine our findings it follows that the function 
\begin{equation} \label{eq:23072005}
\Psi(s):=\norm{s}^{\alpha}  \norm{\varphi(s)^{-D- \frac{q}{\alpha} I }}^{\alpha} \indikatorzwei{ \{\tau_1(s) < 1\} } +   \norm{\varphi(s)^{-D- \frac{q}{\alpha} I }}^{2} \indikatorzwei{ \{\tau_1(s) \ge 1\} }, \quad s \in \R^n
\end{equation}
belongs to $\mathcal{L}^{1}(ds)$, where the case $s=0$ is negligible again. Now fix $t \in \R^n$. Then, in view of \eqref{eq:150720100} and Euler's formula, we define
\begin{equation}  \label{eq:16072047}
\tilde{g}_t(s):=\begin{pmatrix}  (\cos \skp{t}{s}-1) \varphi(s)^{-D- \frac{q}{\alpha} I}  & - \sin\skp{t}{s}\varphi(s)^{-D- \frac{q}{\alpha} I}  \\ 0 & 0  \end{pmatrix}, \quad s \in \R^n
\end{equation}
and observe by equivalence of all norms on $L(\R^{2d})$ that there exists some $K_0 >0$ fulfilling
\begin{equation} \label{eq:04082001}
 \norm{\tilde{g}_t(s)} \le K_0 ( |\cos \skp{t}{s}-1|  + |\sin \skp{t}{s} |  )   \norm{\varphi(s)^{-D- \frac{q}{\alpha} I }}, \quad s \in \R^n.
\end{equation}
Recall the idea from \eqref{eq:14072076} and the inequality $(a+b)^{\alpha} \le 2(a^{\alpha}+b^{\alpha})$. Then we obtain another constant $K_1>0$ such that, for every $s \in \R^n$, $ \norm{\tilde{g}_t(s)}^{\alpha} \wedge  \norm{\tilde{g}_t(s)}^{2} $ is bounded by
\begin{equation*}
K_1 ( |\cos \skp{t}{s}-1|^{\alpha}  + |\sin \skp{t}{s} |^{\alpha}   )   \norm{\varphi(s)^{-D- \frac{q}{\alpha} I }}^{\alpha} \indikatorzwei{ \{\tau_1(s) < 1\} } +   \norm{\varphi(s)^{-D- \frac{q}{\alpha} I }}^{2} \indikatorzwei{ \{\tau_1(s) \ge 1\} }.
\end{equation*}
Finally, using the Cauchy-Schwarz inequality together with some routine estimates for $\sin(\cdot)$ and $\cos(\cdot)$, we verify that 
\begin{align} \label{eq:2207201000}
 \norm{\tilde{g}_t(s)}^{\alpha} \wedge  \norm{\tilde{g}_t(s)}^{2}  & \le K_2  (1+ \norm{t}^{\alpha}) \Psi(s), \quad s \in \R^n
\end{align}
holds true for some constant $K_2 >0$ and due to \eqref{eq:23072005}. In view of \eqref{eq:150720100} and \cref{17052077} (which remains true accordingly) this proves the existence of $\mathbb{Y}$. We now prove part (a) and consider $k \in \N$ as well as $t_1,...,t_k \in \R^n$ arbitrary. It follows from Remark 4.13 in \cite{integral} that the random vector $(Y(t_1),...,Y(t_k))^t$ is infinitely-divisible and that its LCF is given by
\begin{equation}  \label{eq:16072001}
u \mapsto \int_{\R^n} \tilde{\psi}_{\lambda} \left( \left( \sum_{j=1}^{k} (\cos \skp{t_j}{s}-1) \varphi(s)^{-D^{*}- \frac{q}{\alpha} I}u_j, - \sum_{j=1}^{k} \sin \skp{t_j}{s}  \varphi(s)^{-D^{*}- \frac{q}{\alpha} I}u_j \right )^t \right) \, ds,
\end{equation}
where $u=(u_1,...,u_k)^t \in \R^{k \cdot d}$. Recall the specific form of $\tilde{g}_t$ from \eqref{eq:16072047} and observe that $(Y(t_1),...,Y(t_k))^t$ can be regarded as projection of the $\R^{k \cdot (2d)}$-valued random vector $(Z(t_1),...,Z(t_k))^t$, where $Z(t):= \int \tilde{g}_t \, d\widetilde{M}_{\lambda}$ for every $t \in \R^n$. More precisely, for every $w=(w_1,...,w_k)^t \in \R^{k \cdot 2d}$ with $w_j=(u_j,v_j)^t$ (for $j=1,...k$) we can rewrite \eqref{eq:16072001} as
\begin{equation*} 
\int_{\R^n} \tilde{\psi}_{\lambda} \left(  \sum_{j=1}^{k} \tilde{g}_t(s)^{*} w_j \right) \, ds.
\end{equation*}
In any case, $(Y(t_1),...,Y(t_k))^t$ inherits the $T \alpha S$-property from $(Z(t_1),...,Z(t_k))^t$ due to \cref{15052001}. For the proof of part (b) we fix $t \in \R^n$ and have to show that $Y(t+h)-Y(t)$ converges in distribution to zero as $h \rightarrow 0$. For this purpose define 
\begin{equation*}
A(t,s):= \begin{pmatrix} \cos \skp{t}{s} I_d & \sin \skp{t}{s} I_d \\ - \sin \skp{t}{s} I_d & \cos \skp{t}{s} I_d \end{pmatrix}  \in L(\R^{2d})
\end{equation*}
and recall from Proposition 4.3 in \cite{integral} that the integral in \eqref{eq:15072075} is also linear (as a function of $g$). Then, according to \eqref{eq:16072001} and similarly as in the proof of Proposition 5.2 in \cite{paper2}, it can be computed that the LCF of $Y(t+h)-Y(t)$ is given by $\R^d \ni u \mapsto \int \tilde{\psi}_{\lambda} (\zeta(t,h,s,u) ) \, ds $,
where
\begin{align}
\zeta(t,h,s,u): &=  A(t,s)^{*} \left( (\cos \skp{h}{s} -1) \varphi(s)^{-D^{*}- \frac{q}{\alpha} I}u, -  \sin \skp{h}{s}  \varphi(s)^{-D^{*}- \frac{q}{\alpha} I}u \right)^t  \notag\\
&= ( \tilde{g}_h(s)  A(t,s) )^{*} (u,v)^t  \label{eq:22072073}
\end{align}
and the last-mentioned identity holds true for every $v \in \R^{d}$. For fixed $u \in \R^d$ and due to L\'{e}vy's continuity theorem it remans to show the convergence
\begin{equation} \label{eq:20072011}
 \int \tilde{\psi}_{\lambda} (\zeta(t,h,s,u) ) \, ds \rightarrow 0  \quad (h \rightarrow 0).
 \end{equation}
Observe for every $s \in \R^n$ that we have $\tilde{\psi}_{\lambda} (\zeta(t,h,s,u) )\rightarrow 0$ by continuity of $\tilde{\psi}_{\lambda}$. At the same time it is obvious that \cref{21072080} remains true accordingly for $\tilde{\psi}_{\lambda}$. Hence, if we combine \cref{21072080} with \eqref{eq:22072073} (say for $v=0$), it follows that
\begin{align*}
| \tilde{\psi}_{\lambda} (\zeta(t,h,s,u) | & \le T (1+ \norm{u}^2) (  \norm{\tilde{g}_h(s) A(t,s)}^{\alpha} \wedge \norm{\tilde{g}_h(s) A(t,s)}^{2} ) \\
& \le T (1+ \norm{u}^2) (  \norm{\tilde{g}_h(s)}^{\alpha} \wedge \norm{\tilde{g}_h(s) }^{2} ).
\end{align*}
Note that, in the second step, we used the sub-multiplicativity of the operator norm together with the face that $\norm{A(t,s)}=1$. Then the dominated convergence theorem gives \eqref{eq:20072011} due to \eqref{eq:2207201000}. Finally, the argument for part (c) is mostly presented in the proof of Proposition 5.2 (c) in \cite{paper2} and therefore left to the reader.
 \end{proof}
\begin{remark} \label{21072001}
Besides the fact that we demand $\varphi$ to be $E^{*}$-homogeneous, we can drop condition \eqref{eq:15072065} compared to \cref{15072001}. In addition, we get without further assumptions that $Y(t)$ is full for every $t \ne 0$. Let us also emphasize that we did not require that $D$ belongs to $Q(\R^d)$, which will be explained in \cref{chapter4} more into detail.
\end{remark}
The following observation indicates another difference between both representations. 
\begin{cor} \label{16072088}
Let the assumptions of \cref{15012005} be fulfilled. Additionally, assume that the measure $\tilde{\sigma}$ is invariant under all operators $A \in \mathcal{T}(2d)$, i.e. $A(\tilde{\sigma})=\tilde{\sigma}$, where
\begin{equation*}
\mathcal{T}(2d):= \left \{ \begin{pmatrix} (\cos \beta) I_d & (\sin \beta) I_d \\ - (\sin \beta) I_d & (\cos \beta) I_d \end{pmatrix} : \beta \in [0,2 \pi)  \right \}.
\end{equation*}
Then $\mathbb{Y}$ has stationary increments. 
\end{cor}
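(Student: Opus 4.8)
The plan is to reduce the stationary-increment property to an invariance of the underlying L\'evy exponent $\tilde{\psi}_{\lambda}$, which is precisely where the hypothesis $A(\tilde{\sigma})=\tilde{\sigma}$ enters. Since $\mathbb{Y}$ is a $T\alpha S$ random field by part (a) of \cref{15012005}, all its finite-dimensional distributions are infinitely-divisible and hence determined by their log-characteristic functions. It therefore suffices to show that for every $h\in\R^n$, every $k\in\N$, every $t_1,\dots,t_k\in\R^n$ and every $u=(u_1,\dots,u_k)^t\in\R^{k\cdot d}$, the LCF of $(Y(t_1+h)-Y(h),\dots,Y(t_k+h)-Y(h))^t$ equals that of $(Y(t_1),\dots,Y(t_k))^t$.

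First I would rewrite the increment. By linearity of the integral in \eqref{eq:15072075} and of the real part,
\begin{equation*}
Y(t+h)-Y(h)=\text{Re}\int_{\R^n}e^{\im\skp{h}{s}}\left(e^{\im\skp{t}{s}}-1\right)\varphi(s)^{-D-\frac{q}{\alpha}I}\,\mathbb{M}^{\C}_{\lambda}(ds).
\end{equation*}
Passing to the $\R^{2d}$-picture via \eqref{eq:150720100}, Euler's formula shows that the real kernel attached to this integrand is $\tilde{g}_t(s)\,A(h,s)^{*}$, with $\tilde{g}_t$ as in \eqref{eq:16072047} and $A(h,s)\in\mathcal{T}(2d)$ the rotation from the proof of part (b) of \cref{15012005}; whether one obtains $A(h,s)$ or its transpose is immaterial, as $\mathcal{T}(2d)$ is closed under transposition. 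Because $A(h,s)^{*}$ is orthogonal we have $\norm{\tilde{g}_t(s)A(h,s)^{*}}=\norm{\tilde{g}_t(s)}$, so the increment kernel lies in $\mathcal{I}(\widetilde{\mathbb{M}}_{\lambda})$ and the increment field exists. Repeating the computation in the proof of part (a) of \cref{15012005} verbatim, its LCF is
\begin{equation*}
u\mapsto\int_{\R^n}\tilde{\psi}_{\lambda}\Big(A(h,s)\,\xi(s)\Big)\,ds,\qquad \xi(s):=\sum_{j=1}^{k}\tilde{g}_{t_j}(s)^{*}(u_j,0)^t,
\end{equation*}
where the common factor $A(h,s)$ has been pulled out of the $j$-sum, and where $u\mapsto\int_{\R^n}\tilde{\psi}_{\lambda}(\xi(s))\,ds$ is exactly the LCF of $(Y(t_1),\dots,Y(t_k))^t$ coming from \eqref{eq:16072001}.

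The conceptual heart is the pointwise invariance $\tilde{\psi}_{\lambda}(A y)=\tilde{\psi}_{\lambda}(y)$ for every $A\in\mathcal{T}(2d)$ and $y\in\R^{2d}$. Here I would use that $\tilde{\mu}_{\lambda}$ is symmetric, so by \eqref{eq:14052005}
\begin{equation*}
\tilde{\psi}_{\lambda}(y)=\int_{\R^{2d}}(\cos\skp{y}{x}-1)\,\tilde{\phi}_{\lambda}(dx).
\end{equation*}
Every $A\in\mathcal{T}(2d)$ is orthogonal, hence preserves $\norm{\cdot}$ and maps $S^{2d-1}$ onto itself; combined with $A(\tilde{\sigma})=\tilde{\sigma}$ and the polar form \eqref{eq:17052010} of $\tilde{\phi}_{\lambda}$, whose radial factor $r^{-\alpha-1}e^{-\lambda r}\,dr$ is rotation-invariant, this yields $A(\tilde{\phi}_{\lambda})=\tilde{\phi}_{\lambda}$. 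Using $\skp{Ay}{x}=\skp{y}{A^{*}x}$ and the substitution $x\mapsto A^{*}x$, which is legitimate since $A^{*}=A^{-1}\in\mathcal{T}(2d)$ and hence $A^{*}(\tilde{\phi}_{\lambda})=\tilde{\phi}_{\lambda}$, gives $\tilde{\psi}_{\lambda}(Ay)=\tilde{\psi}_{\lambda}(y)$.

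Applying this invariance pointwise in $s$ with $A=A(h,s)$ and $y=\xi(s)$ makes the two integrands coincide, so the two LCFs are equal and therefore $\{Y(t+h)-Y(h):t\in\R^n\}\overset{fdd}{=}\{Y(t):t\in\R^n\}$, which is the claim. I expect the only genuinely delicate step to be the bookkeeping in the second paragraph: correctly identifying the increment kernel with $\tilde{g}_t(s)A(h,s)^{*}$ and carrying the adjoint through the LCF formula so that the single rotation $A(h,s)$ factors out of the $j$-sum. The invariance computation and the existence of the increment field are then routine.
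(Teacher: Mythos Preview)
Your proof is correct and follows essentially the same route as the paper's. The paper recycles the computation of $\zeta(h,t,s,u)=(\tilde g_t(s)A(h,s))^{*}(u,v)^t$ from the proof of \cref{15012005} (by swapping the roles of $t$ and $h$) and then invokes the invariance $\tilde\psi_{\lambda}(A^{*}w)=\tilde\psi_{\lambda}(w)$ for $A\in\mathcal{T}(2d)$, noting that $A(0,s)=I_{2d}$; you instead rederive the increment kernel directly as $\tilde g_t(s)A(h,s)^{*}$, handle the full $k$-dimensional LCF in one stroke, and spell out why the orthogonality of $A$ together with $A(\tilde\sigma)=\tilde\sigma$ forces $A(\tilde\phi_{\lambda})=\tilde\phi_{\lambda}$ and hence the invariance of $\tilde\psi_{\lambda}$---details the paper leaves to the reader.
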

\begin{proof}[\bf{Proof of \cref{16072088}}]  \label{proof5}
Fix $t,h \in \R^n$ and observe that, by swapping the roles of $t$ and $h$ in the proof of \cref{15012005}, the LCF of $Y(t+h)-Y(h)$ is given by $ \R^d \ni u \mapsto \int \tilde{\psi}_{\lambda} (\zeta(h,t,s,u) ) \, ds $. Moreover, since $q(r, \theta)=q_{\lambda}(r)=e^{- \lambda r}$ does not depend on $\theta$, it is easy to compute that the given assumption implies that 
\begin{equation*}
\forall A \in \mathcal{T}(2d) \, \, \forall w \in \R^{2d}: \quad \tilde{\psi}_{\lambda}(A^{*}w)=\tilde{\psi}_{\lambda}(w).
\end{equation*}
Hence, since $A(0,s)=I_{2d}$, we observe for every $u \in \R^d$ that $\int \tilde{\psi}_{\lambda} (\zeta(h,t,s,u) ) \, ds$ equals $\int \tilde{\psi}_{\lambda} (\zeta(0,t,s,u) ) \, ds $, which is nothing else than the LCF of $Y(t)$. Finally, in view of \eqref{eq:16072001}, the fdd-extension is obvious and therefore left to the reader. This shows that $\mathbb{Y}$ has stationary increments. 
\end{proof}
\subsection{The special cases $d=1$ and $n=1$}
Certainly, our multivariate approach is very general and its notation therefore rather involved. For many applications it is often enough to consider the cases $d=1$ or $n=1$ (or even $d=n=1$). For those readers we briefly want to remark some useful observations and examples:
\begin{itemize}
\item Recall that we denote the greatest real part of the eigenvalues of $D$ by $H$, which, in case $d=1$, is just a real number. Then $H=h$ is referred to as the so-called \textit{Hurst-index} in literature. Sometimes this Hurst-Index can help to determine possible scaling properties (see \cref{chapter4} below) as well as sample path properties of the corresponding random fields.
\item Consider the situation from \cref{17052077} again, particularly for $d=1$ and $\alpha \ne 1$. In this case we have $\sigma=c (\eps_{-1}+\eps_{1})$ for some $c>0$, where $\eps_x$ denotes the point measure in $x$. Then it follows from Remark 2.8 in \cite{tappe} that the $\psi_{\lambda}$ can be computed as
\begin{equation*}
\psi(u)=- 2c \,  \Gamma(- \alpha) [ \lambda^{\alpha} - \text{Re }  (\lambda + \im u)^{\alpha} ]   , \quad u \in \R.
\end{equation*}
\item On the other hand, in case $n=1$, it appears natural to consider the function $\varphi(\cdot)=|\cdot|$, which is $1$-homogeneous and $(1,1)$-admissible. Then for the existence of the moving-average representation $\mathbb{X}=\{X(t): t \in \R\}$ from \cref{15072001} it only remains to ensure that $0<H<\frac{1}{\alpha}+ \frac{1}{2}$ holds true. Moreover, $X(t)$ is full for every $t \ne 0$ provided that $H \ne \frac{1}{\alpha}$. Conversely, in the context of \cref{15012005}, condition \eqref{eq:14072099} is equivalent to $\frac{1}{2}-\frac{1}{\alpha}<h \le H<1$ in this case.
\end{itemize}
\section{The role of the tempering parameter} \label{chapter4}
Recall the definitions from \cref{chapter3}, particularly those of $\mathbb{M}_{\lambda}$, $\widetilde{\mathbb{M}}_{\lambda},\mathbb{M}_{\lambda}^{\C}$ and the respective generators. In either case we call $\lambda>0$ the \textit{tempering parameter}. This section will essentially deal with the behavior of the random fields from \cref{chapter3} as $\lambda \rightarrow 0$. For this purpose we first have to define further generators and corresponding random measures that deal with the case $\lambda=0$ in a reasonable way.
\begin{remark} \label{17072001}
Roughly speaking, for $q \equiv 1$ in \eqref{eq:17052010} we get back the class of $\alpha$-stable distributions on $\R^d$, although this $q$ is no valid tempering function anymore. Anyway, let $\mu_0 \sim [0,0,\phi_0]$, where $\phi_0(dr,d \theta):=r^{-\alpha-1} \, dr \, \sigma(d\theta)$ and note that $\mu_0$ is symmetric again. Similarly to \cref{15052002} there exists an $\R^d$-valued random measure, which is generated by $\mu_0$ and which we denote by $\mathbb{M}_0$. We call $\mathbb{M}_0$ a symmetric $\alpha$-stable (abbreviated by $S \alpha S$) ISRM. Note that $\mathbb{M}_0$ corresponds to Example 2.4 in \cite{paper2}. There it has been shown that a measurable function $f:S \rightarrow L(\R^d)$ is \textit{integrable} with respect to $\mathbb{M}_0$ (i.e. $f \in \mathcal{I}(\mathbb{M}_0)$) if and only if
\begin{equation*}
 \int_S \int_{S^{d-1}} \norm{f(s) \theta}^{\alpha} \, \sigma(d \theta) \, \nu(ds)=  \int_{\R^n} \int_{S^{d-1}} \norm{f(s) \theta}^{\alpha} \, \sigma(d \theta) \, ds <\infty. 
\end{equation*}
In view of \cref{15052076} and \cref{17052077} it easily follows that $\mathcal{I}(\mathbb{M}_0) \subset \mathcal{I}(\mathbb{M}_{\lambda})=\mathcal{I}(\mathbb{M}_{1})$ for every $\lambda>0$. Finally, in a similar way as before we obtain an $\R^{2d}$-valued $S \alpha S$ random measure $\widetilde{\mathbb{M}}_0$ (generated by $\tilde{\mu}_0$ with LCF $\tilde{\psi}_0$) and a corresponding $\C^{d}$-valued one, denoted by $\mathbb{M}^{\C}_0$. We omit the details. 
\end{remark}

\subsection{A note on different kinds of tempering} 
Let us emphasize that a main advantage of our approach is based on the fact that we use $T \alpha S$ distributions as a generator for the underlying random measures. By doing so we combine both the heavy tailed behavior of classical $\alpha$-stable distributions and Gaussian trends. Hence, concerning the corresponding stochastic integrals, the observation of \cref{17072001} above is not surprising. Namely we enlarge the class of possible kernel functions when using $T \alpha S$ generators instead of $\alpha$-stable ones. Let us illustrate this effect by means of the kernel functions from \cref{chapter3}. \\
For this purpose note that the following statement is due to Theorem 2.5 and Theorem 2.6 in \cite{lixiao}, respectively, except for a negligible difference. That is, the authors in \cite{lixiao} assume $\sigma$ (and $\tilde{\sigma}$) to be uniformly distributed on $S^{d-1}$ (and $S^{2d-1}$), at least up to a constant, i.e. the LCF of $\mu_0$ is given by $\psi_0(u)=- \rho \norm{u}^{\alpha}$ for some $\rho>0$. 
\begin{prop} \label{230720111}
\begin{itemize}
\item[(a)] Assume that the conditions of \cref{15072001} are fulfilled with $H < \beta$ instead of \eqref{eq:14072005}. Then the random field $\mathbb{X}_0=\{X_0(t): t \in \R^n\}$ exists, where
\begin{equation*}
X_0(t):= \int_{\R^n} \left[  \varphi(t-s)^{D- \frac{q}{\alpha}  I} - \varphi(-s)^{D-  \frac{q}{\alpha} I}   \right] \, \mathbb{M}_0(ds) , \quad t \in \R^n.
\end{equation*}
\item[(b)] Assume that the conditions of \cref{15012005} are fulfilled for some $D \in Q(\R^d)$ (i.e. \eqref{eq:14072099} reduces to $H<a$). Then the random field $\mathbb{Y}_0=\{Y_0(t): t \in \R^n\}$ exists, where
\begin{equation*}
Y_0(t):= \text{Re } \int_{\R^n} \left(e^{ \im \skp{t}{s}} -1 \right) \varphi(s)^{-D- \frac{q}{\alpha} I}  \, \mathbb{M}_0^{\C}(ds) , \quad t \in \R^n.
\end{equation*}
\end{itemize}
In both cases, $\mathbb{X}_0$ and $\mathbb{Y}_0$ are $S \alpha S$ random fields, respectively. 
\end{prop}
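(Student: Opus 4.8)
The plan is to argue that both $\mathbb{X}_0$ and $\mathbb{Y}_0$ arise from the same kernel functions used in \cref{15072001} and \cref{15012005}, but integrated against the $S\alpha S$ measures $\mathbb{M}_0$ and $\mathbb{M}_0^{\C}$ in place of their tempered counterparts. Since \cref{17072001} gives an explicit integrability criterion for $\mathbb{M}_0$, namely that $f \in \mathcal{I}(\mathbb{M}_0)$ if and only if $\int_{\R^n} \int_{S^{d-1}} \norm{f(s)\theta}^{\alpha} \, \sigma(d\theta) \, ds < \infty$, the existence of each field reduces to verifying this single $\alpha$-integrability condition for the respective kernel. The key structural point is that \cref{17072001} also records the inclusion $\mathcal{I}(\mathbb{M}_0) \subset \mathcal{I}(\mathbb{M}_{\lambda})$, so the $\alpha$-condition is \emph{stronger} than what was needed in the tempered case; this is precisely why one must impose $H < \beta$ in part (a) and restrict to $D \in Q(\R^d)$ in part (b).

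For part (a), I would set $f_t(s) = \varphi(t-s)^{D-\frac{q}{\alpha}I} - \varphi(-s)^{D-\frac{q}{\alpha}I}$ as in \eqref{eq:15072001} and observe that $\int_{S^{d-1}} \norm{f_t(s)\theta}^{\alpha} \, \sigma(d\theta) \le \sigma(S^{d-1}) \norm{f_t(s)}^{\alpha}$, so it suffices to show $\norm{f_t(\cdot)}^{\alpha} \in \mathcal{L}^1(ds)$. This is exactly the fact established in the proof of \cref{15072001}: under the stronger hypothesis $H < \beta$ (rather than \eqref{eq:14072005}), one splits according to $\{\tau(s) \le \gamma\}$ and $\{\tau(s) > \gamma\}$ and shows both pieces of $\norm{f_t(s)}^{\alpha}$ are integrable, citing the proof of Theorem 2.5 in \cite{lixiao}. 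For part (b), I would likewise take the kernel $g_t(s) = (e^{\im\skp{t}{s}}-1)\varphi(s)^{-D-\frac{q}{\alpha}I}$ and appeal to the corresponding $\C^d$-valued integrability criterion: via the real representation \eqref{eq:150720100} and the $S\alpha S$ analogue of \cref{17072001}, existence of $Y_0(t)$ reduces to the $\alpha$-integrability of the function $\norm{s}^{\alpha}\norm{\varphi(s)^{-D-\frac{q}{\alpha}I}}^{\alpha}\indikatorzwei{\{\tau_1(s)<1\}} + \norm{\varphi(s)^{-D-\frac{q}{\alpha}I}}^{\alpha}\indikatorzwei{\{\tau_1(s)\ge 1\}}$, which is shown integrable in the proof of Theorem 2.6 in \cite{lixiao} precisely under $D \in Q(\R^d)$ together with $H < a$.

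The remaining claim, that $\mathbb{X}_0$ and $\mathbb{Y}_0$ are $S\alpha S$ random fields, should follow from the same machinery as in \cref{15072001}(a) and \cref{15012005}(a): the finite-dimensional LCFs are integrals of $\psi_0$ (respectively $\tilde\psi_0$) against the kernels, and since $\mu_0$ has no tempering its LCF is $\alpha$-homogeneous, so each finite-dimensional distribution is $\alpha$-stable. I expect the main obstacle to be cosmetic rather than substantive: the entire argument is a matter of importing the integrability estimates from \cite{lixiao} and checking that the \emph{$\alpha$-only} versions of those estimates (without the $L^2$-pieces that the tempered setting required) go through under the sharpened hypotheses. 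Since these estimates are already invoked in the proofs of \cref{15072001} and \cref{15012005}, and since \cref{17072001} supplies the exact integrability characterization for $\mathbb{M}_0$, the proof is essentially a bookkeeping exercise, and I would leave the routine verifications to the reader with pointers to the relevant results in \cite{lixiao} and \cite{paper2}.
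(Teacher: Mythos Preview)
Your proposal is correct and aligns with the paper's treatment: the paper does not give a separate proof for this proposition but simply notes that it is due to Theorem~2.5 and Theorem~2.6 in \cite{lixiao} (with the negligible difference that \cite{lixiao} takes $\sigma$ uniform on the sphere). Your outline makes this explicit by reducing existence to the $\alpha$-integrability criterion from \cref{17072001} and then invoking exactly those estimates from \cite{lixiao}, which is precisely the intended argument.
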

Here the notion of \textit{$S \alpha S$ random fields} is analogous to \cref{14072001}. Moreover, as expected, the assumptions under which $\mathbb{X}_0$ and $\mathbb{Y}_0$ exist are more restrictive compared to their natural counterparts from \cref{chapter3} and we lose the $T \alpha S$ property. However, the remaining properties from \cref{15072001} and \cref{15012005} still apply to $\mathbb{X}_0$ and $\mathbb{Y}_0$ accordingly.\\
Actually, the random fields from \cref{chapter3} have a \textit{true} tempered stable character that arises from \cref{15052001}. Let us be more specific what we mean by the word \textit{true}: Recall the main results from \cite{didier} and note that there a different kind of tempering is considered, which extends the univariate idea from \cite{mark}. That is, the authors in \cite{didier} somehow add a tempering function to the actual kernel function, say $f(\cdot)$, while they abstain from the use of $T \alpha S$ random measured as integrator. This has two consequences. On the one hand, it follows that the behavior of $f(s)$ is restrained for large values of $s$. However, on the other hand, the resulting stochastic integrals have no $T \alpha S$ distribution. \\
Also note that, in \cite{didier}, the aforementioned approach has been performed both for a moving-average representation and for a harmonizable representation. We focus on the first one. Then, taking into account the different notation in \cite{didier}, it is easy to verify that we obtain the following observation as a by-product of Theorem 3.1 in \cite{didier} (namely for $B= \frac{1}{\alpha} I$). The details are left to the reader.
\begin{example} \label{24072001}
Fix $\lambda>0$ and let $\varphi: \R^n \rightarrow [0,\infty)$ be an $E$-homogeneous function for some $E \in Q(\R^n)$, where $q$ denotes the trace of $E$ again. Also consider $D \in Q(\R^d)$ arbitrary. Then the random field $\hat{\mathbb{X}}=\{\hat{X}(t): t \in \R^n\}$, defined by
\begin{equation*}
\hat{X}(t):= \int_{\R^n} \left[  e^{- \lambda \varphi(t-s)}  \varphi(t-s)^{D- \frac{q}{\alpha}  I} - e^{- \lambda \varphi(-s)} \varphi(-s)^{D-  \frac{q}{\alpha} I}   \right] \, \mathbb{M}_0(ds), 
\end{equation*}
exists, where $\mathbb{M}_0$ is as above. Moreover, $\hat{\mathbb{X}}$ is a $S \alpha S$ random field.
\end{example}
Recall from the proof of Theorem 2.5 in \cite{lixiao} that, in the context of \cref{230720111} (a), condition $H < \beta$ is needed in order to control the behavior of $f_t(s)$ from \eqref{eq:15072001} for large values of $s$. Hence, as already announced before, this condition disappears when tempering in the sense of \cite{didier}. For the same reason we observe in \cref{24072001} that $\varphi$ has not to be admissible anymore.
\subsection{Scaling properties and tangent fields}
In what follows we will show that the random fields from \cref{chapter3} have an intimate relation to those from \cref{230720111}. 
\begin{lemma}  \label{16072044}
We have that 
\begin{equation*}
\forall \rho, \lambda>0 \,  \, \forall u \in \R^d: \quad \psi_{\lambda}(\rho \, u)= \rho^{\alpha} \, \psi_{\lambda/ \rho} (u).
\end{equation*}
Similarly, the statement holds true for $\tilde{\psi}_{\lambda}$.
\end{lemma}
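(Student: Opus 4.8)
The plan is to reduce the statement to the explicit polar-coordinate form of the log-characteristic function and then perform a single change of variables in the radial integral. Since $\sigma$ is symmetric, the generator $\mu_{\lambda}$ is symmetric with vanishing shift, so by \cref{15052055}~(iv) the representation \eqref{eq:14052005} applies; writing its Lévy measure $\phi_{\lambda}$ in polar coordinates according to \eqref{eq:17052010} with $q_{\lambda}(r)=e^{-\lambda r}$, I would first record
\begin{equation*}
\psi_{\lambda}(u)=\int_{S^{d-1}}\int_0^{\infty}\bigl(\cos\skp{u}{r\theta}-1\bigr)\,r^{-\alpha-1}e^{-\lambda r}\,dr\,\sigma(d\theta),\qquad u\in\R^d.
\end{equation*}
This integral is finite because $\cos\skp{u}{r\theta}-1=O(\norm{r\theta}^2)$ near the origin and $\phi_{\lambda}$ is a genuine Lévy measure, so Fubini applies throughout.

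Next I would substitute $\rho u$ for $u$ and use bilinearity of the inner product to shift the scalar onto the radial variable, namely $\skp{\rho u}{r\theta}=\skp{u}{\rho r\,\theta}$. The cosine term thereby becomes the integrand one wants for the argument $u$, but evaluated at radius $\rho r$ rather than $r$. The key step is then the change of variables $s=\rho r$ (so $r=s/\rho$ and $dr=ds/\rho$) in the inner integral. This simultaneously converts the homogeneity factor $r^{-\alpha-1}=\rho^{\alpha+1}s^{-\alpha-1}$ and the tempering factor $e^{-\lambda r}=e^{-(\lambda/\rho)s}$, the latter being precisely the tempering function of parameter $\lambda/\rho$. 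Collecting the powers of $\rho$ gives $\rho^{\alpha+1}\cdot\rho^{-1}=\rho^{\alpha}$, and reading off what remains produces exactly $\rho^{\alpha}\psi_{\lambda/\rho}(u)$.

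The assertion for $\tilde{\psi}_{\lambda}$ then follows verbatim: the only modifications are that the sphere $S^{d-1}$ is replaced by $S^{2d-1}$ and $\sigma$ by $\tilde{\sigma}$, and neither the measure $\tilde{\sigma}$ nor the ambient dimension interacts with the radial substitution, which is all that drives the scaling. There is no genuine obstacle here — the whole argument is essentially a one-line change of variables — and the only point requiring a little care is the bookkeeping of the three $\rho$-dependent pieces (the inner product, the scaling factor $r^{-\alpha-1}$, and the Jacobian $dr$) together with the recognition that the transformed exponential is exactly the tempering at the rescaled parameter $\lambda/\rho$.
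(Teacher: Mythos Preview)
Your proposal is correct and follows essentially the same approach as the paper: both arguments write $\psi_{\lambda}$ via \eqref{eq:14052005} and \eqref{eq:17052010} and perform the radial substitution $s=\rho r$, which simultaneously produces the factor $\rho^{\alpha}$ and converts the tempering $e^{-\lambda r}$ into $e^{-(\lambda/\rho)s}$. The only cosmetic difference is that the paper first isolates the measure identity $(\rho\phi_{\lambda})=\rho^{\alpha}\phi_{\lambda/\rho}$ and then reads off the LCF identity, whereas you carry out the change of variables directly inside the integral for $\psi_{\lambda}(\rho u)$.
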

\begin{proof}
In view of $\mu_{\lambda} \sim [0,0, \phi_{\lambda}]$, where $\phi_{\lambda}$ is symmetric, we obtain that
\begin{equation*}
\psi_{\lambda} (\rho u) = \int_{\R^d} (\cos \skp{\rho u}{x} -1) \, \phi_{\lambda}(dx)  = \int_{\R^d} (\cos \skp{ u}{x}-1)  \,(\rho \phi_{\lambda}) (dx).
\end{equation*}
It remains to show that $(\rho \phi_{\lambda})= \rho^{\alpha} \cdot \phi_{\lambda/ \rho}$. Fix $A \in \B(\R^d)$. Then, due to \eqref{eq:17052010} and by a change of variables, we compute that
\begin{align*}
(\rho \phi_{\lambda})(A) &= \int_{S^{d-1}} \int_{0}^{\infty} \indikator{\rho^{-1} A}{r \theta} r^{- \alpha -1} e^{- \lambda r} \,dr \, \sigma (d \theta) \\
& = \int_{S^{d-1}} \int_{0}^{\infty} \indikator{ A}{\rho r \theta} r^{- \alpha -1} e^{- \lambda r} \,dr \, \sigma (d \theta) \\
& = \rho^{\alpha} \int_{S^{d-1}} \int_{0}^{\infty} \indikator{ A}{ r \theta} r^{- \alpha -1} e^{- \lambda r/ \rho} \,dr \, \sigma (d \theta) = \rho^{\alpha} \cdot \phi_{\lambda/ \rho} (A).
\end{align*}
\end{proof}
In order to be more specific we denote by $\mathbb{X}_{\lambda}=\{ X_{\lambda}(t): t \in \R^n \}$ and $\mathbb{Y}_{\lambda}=\{ Y_{\lambda}(t): t \in \R^n \}$ the moving-average and harmonizable representation corresponding to $\mathbb{M}_{\lambda}$ and $\mathbb{M}_{\lambda}^{\C}$ attained in \cref{chapter3}, respectively. Then the next result states scaling properties for $\mathbb{X}_{\lambda}$ and $\mathbb{Y}_{\lambda}$ and is therefore of independent interest. 
\begin{prop} \label{17072077}
\begin{itemize}
\item[(a)] Under the assumptions of \cref{15072001} we have that
\begin{equation}  \label{eq:224072077}
\forall c>0: \quad \{X_{\lambda} (c^E t): t \in \R^n \}  \overset{fdd}{=}  \{c^D X_{c^{ \frac{q}{\alpha}} \lambda} ( t): t \in \R^n \}.   
\end{equation}
\item[(b)] Conversely, under the assumptions of \cref{15012005} we have that
 \begin{equation}  \label{eq:224072078}
 \forall c>0: \quad \{Y_{\lambda} (c^E t): t \in \R^n \}  \overset{fdd}{=}  \{c^D Y_{c^{ - \frac{q}{\alpha}} \lambda} ( t): t \in \R^n \}.  
  \end{equation}
\end{itemize}
\end{prop}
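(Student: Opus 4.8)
The plan is to prove the two scaling relations by computing the log-characteristic function of the left-hand side and transforming it, via a change of variables in the integral over $\R^n$, into the LCF of the right-hand side. Since both $\mathbb{X}_{\lambda}$ and $\mathbb{Y}_{\lambda}$ have all finite-dimensional distributions determined by their LCFs (they are $T\alpha S$ by \cref{15072001} and \cref{15012005}), and equality in distribution of random vectors is equivalent to equality of LCFs, it suffices to match the LCFs for an arbitrary finite collection $t_1,\dots,t_k \in \R^n$. I would treat part (a) first and in detail, then indicate how part (b) follows by the same mechanism.

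For part (a), fix $c>0$ and $t_1,\dots,t_k\in\R^n$. By \eqref{eq:17052001} the LCF of $(X_{\lambda}(c^E t_1),\dots,X_{\lambda}(c^E t_k))^t$ evaluated at $(u_1,\dots,u_k)^t$ is $\int_{\R^n}\psi_{\lambda}\bigl(\sum_{j=1}^k f_{c^E t_j}(s)^{*}u_j\bigr)\,ds$, where $f_t$ is the kernel from \eqref{eq:15072001}. The key computation is a change of variables $s = c^E \sigma$, under which $ds = (\det c^E)\,d\sigma = c^{q}\,d\sigma$ since $q=\operatorname{tr} E$ and $\det c^E = c^{\operatorname{tr} E}$. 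Using $E$-homogeneity of $\varphi$, namely $\varphi(c^E x)=c\,\varphi(x)$, together with the identity $(c\,r)^{D-\frac{q}{\alpha}I}=c^{D-\frac{q}{\alpha}I}r^{D-\frac{q}{\alpha}I}$ for the matrix powers, one finds $f_{c^E t_j}(c^E\sigma)=c^{D-\frac{q}{\alpha}I}f_{t_j}(\sigma)$, so that the factor $c^{D-\frac{q}{\alpha}I}$ factors out of the kernel. The transposed operator $c^{D^{*}-\frac{q}{\alpha}I}$ can then be pushed onto the $u_j$, and after collecting the Jacobian $c^{q}$ and the scalar factor $c^{-q/\alpha\cdot\alpha}=c^{-q}$ produced by \cref{16072044} (applied with $\rho=c^{q/\alpha}$, turning $\psi_{\lambda}(c^{q/\alpha}\cdot)$ into $c^{q}\psi_{\lambda/c^{q/\alpha}}(\cdot)$), the two powers of $c$ cancel. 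What remains is exactly $\int_{\R^n}\psi_{c^{q/\alpha}\lambda}\bigl(\sum_{j=1}^k f_{t_j}(\sigma)^{*}\,c^{D^{*}}u_j\bigr)\,d\sigma$, which by \eqref{eq:17052001} is the LCF of $(c^D X_{c^{q/\alpha}\lambda}(t_1),\dots,c^D X_{c^{q/\alpha}\lambda}(t_k))^t$ evaluated at $(u_1,\dots,u_k)^t$, since $c^D$ acting on the integral corresponds to $c^{D^{*}}$ acting on the test vectors. This establishes \eqref{eq:224072077}.

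For part (b), the argument is structurally identical but uses the harmonizable kernel and the $E^{*}$-homogeneity of $\varphi$. Here the LCF is the one displayed in \eqref{eq:16072001}, and the relevant substitution is again $s=c^{E}\sigma$ (equivalently one tracks how $\langle c^E t,s\rangle$ and $\varphi(s)^{-D-\frac{q}{\alpha}I}$ transform). The crucial sign change arises because the exponent now carries $-D-\frac{q}{\alpha}I$ rather than $D-\frac{q}{\alpha}I$: under $\varphi(c^{E}\sigma) = c\,\varphi(\sigma)$ the factor that comes out is $c^{-D-\frac{q}{\alpha}I}$, and balancing the Jacobian $c^{q}$ forces the application of \cref{16072044} with $\rho=c^{-q/\alpha}$, which is precisely why the tempering parameter on the right-hand side is $c^{-q/\alpha}\lambda$ instead of $c^{q/\alpha}\lambda$. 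One must also verify that the inner-product term $e^{\im\langle c^E t,s\rangle}-1$ is carried correctly to $e^{\im\langle t,\sigma\rangle}-1$ by the substitution, using $\langle c^E t,\sigma\rangle$ matched against the transpose; this is where the $E^{*}$ (rather than $E$) homogeneity is exactly what makes $\langle c^E t, s\rangle$ combine cleanly with the change of variables. After pushing $c^{D^{*}}$ onto $u$ as before, one obtains the LCF of $c^D Y_{c^{-q/\alpha}\lambda}(t)$, proving \eqref{eq:224072078}.

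The main obstacle I anticipate is bookkeeping rather than conceptual: one must be careful that the matrix power $c^{D-\frac{q}{\alpha}I}$ commutes out of the kernel correctly (using $c^{A+B}=c^A c^B$ only because $\frac{q}{\alpha}I$ is scalar and hence commutes with $D$), that the adjoint is taken consistently when moving $c^{D}$ from the left of the integrand to $c^{D^{*}}$ on the test vectors $u_j$, and above all that the two invocations of the Jacobian $c^{q}$ and of \cref{16072044} produce cancelling powers of $c$ with the correct exponents $\pm q/\alpha$ on the tempering parameter. The sign asymmetry between the two parts, traceable entirely to the $\pm D$ in the exponent, is the one place where a careless computation would give the wrong scaling of $\lambda$, so I would display that step of the change of variables explicitly in each part and leave only the routine fdd-extension to the reader.
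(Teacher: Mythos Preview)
Your approach is exactly the paper's: compute the joint LCF via \eqref{eq:17052001} (resp.\ \eqref{eq:16072001}), pull the scaling through the kernel, invoke \cref{16072044}, and cancel the Jacobian against the $\rho^{\alpha}$ factor. The paper in fact writes the kernel identity first, $f_{c^E t}(s)=c^{D-\frac{q}{\alpha}I}f_t(c^{-E}s)$, and changes variables at the end, but this is the same computation as your $s=c^E\sigma$.

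However, the bookkeeping errors you worried about are present. In part~(a) the scalar coming out of $f_{c^E t_j}(c^E\sigma)^{*}$ is $c^{-q/\alpha}$, so \cref{16072044} is applied with $\rho=c^{-q/\alpha}$ (not $c^{q/\alpha}$ as your parenthetical states); this is what produces $\psi_{\lambda/\rho}=\psi_{c^{q/\alpha}\lambda}$ and the factor $\rho^{\alpha}=c^{-q}$ that cancels the Jacobian. Your final expression is correct, but the intermediate description is inconsistent with it.

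In part~(b) the substitution $s=c^E\sigma$ does not work: neither $\langle c^E t,c^E\sigma\rangle$ nor $\varphi(c^E\sigma)$ simplifies, since $\varphi$ is $E^{*}$-homogeneous, not $E$-homogeneous. The correct move (and what the paper does via \eqref{eq:240720200}--\eqref{eq:240720201}) is to pass to $c^{E^{*}}s$: then $\langle c^E t,s\rangle=\langle t,c^{E^{*}}s\rangle$, $\varphi(s)=c^{-1}\varphi(c^{E^{*}}s)$, and the change of variables $s\mapsto c^{-E^{*}}s$ has Jacobian $c^{-q}$. The scalar from the kernel is now $c^{+q/\alpha}$, so \cref{16072044} with $\rho=c^{q/\alpha}$ gives the factor $c^{q}$ and the tempering parameter $c^{-q/\alpha}\lambda$, as claimed. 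Your narrative arrives at the right exponent on $\lambda$, but the line ``under $\varphi(c^E\sigma)=c\varphi(\sigma)$'' is false and should be replaced by the $E^{*}$ version.
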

\begin{proof}
For the proof of part (a) recall $f_t(\cdot)$ from \eqref{eq:15072001} and observe for (almost) all $s \in \R^n$ that, by $E$-homogeneity of $\varphi$, we have
\begin{equation} \label{eq:16072042}
f_{c^E t} (s)=c^{D- \frac{q}{\alpha} I} f_t(c^{-E} s).
\end{equation}
Now fix $k \in \N$ and $t_1,...,t_k \in \R^n$. Then, due to \eqref{eq:17052001}, \eqref{eq:16072042}, \cref{16072044} and by a change of variables, we compute that the LCF of $(X_{\lambda}(c^E t_1),...,X_{\lambda}(c^E t_k))^t$ is given by 
\begin{align}
\R^{k \cdot d} \ni (u_1,...,u_k) \mapsto & \int_{\R^n} \psi_{\lambda} \left( \sum_{j=1}^{k} f_{c^E t_j}(s)^{*}u_j \right) \, ds \label{eq:03082001}  \\
&= \int_{\R^n} \psi_{\lambda} \left(c^{- \frac{q}{\alpha}} \sum_{j=1}^{k} f_{ t_j}(c^{-E} s)^{*} c^{D^{*}} u_j \right) \, ds  \notag \\
&=c^{-q} \int_{\R^n} \psi_{c^{ \frac{q}{\alpha}} \lambda } \left(\sum_{j=1}^{k} f_{ t_j}(c^{-E} s)^{*} c^{D^{*}} u_j \right) \, ds   \notag \\
&= \int_{\R^n} \psi_{c^{ \frac{q}{\alpha}} \lambda} \left(\sum_{j=1}^{k} f_{ t_j}( s)^{*} c^{D^{*}} u_j \right) \, ds , \notag
\end{align}
which, again by \eqref{eq:17052001}, is just the LCF of $(c^D X_{c^{ \frac{q}{\alpha}} \lambda}( t_1),...,c^D X_{c^{ \frac{q}{\alpha}} \lambda}( t_k))^t$. This gives \eqref{eq:224072077}. Concerning part (b) recall the representation of $Y(t)=Y_{\lambda}(t)$ from \eqref{eq:17072001}. Using the $E^{*}$-homogeneity of $\varphi$ (which, in general, is different from the function $\varphi$ in part (a)) we obtain that
\begin{equation}  \label{eq:240720200}
(\cos \skp{c^E t}{s}-1 ) \phi(s)^{-D^{*}- \frac{q}{\alpha} I}=c^{\frac{q}{\alpha}} (\cos \skp{  t}{c^{E^{*}} s}-1 )   \phi(c^{E^{*}}s)^{-D^{*}  - \frac{q}{\alpha} I} c^{D^{*}}
\end{equation}
and, similarly, that 
\begin{equation} \label{eq:240720201}
\sin \skp{c^E t}{s}  \varphi(s)^{-D^{*}-\frac{q}{\alpha} I }= c^{\frac{q}{\alpha}} \sin \skp {t}{c^{E^{*}}s}  \varphi(c^{E^{*}}s)^{-D^{*}  -\frac{q}{\alpha} I } c^{D^{*}}  .
\end{equation}
Note that the trace of $E^{*}$ equals $q$ again. Hence, based on \eqref{eq:16072001} and \eqref{eq:240720200}-\eqref{eq:240720201}, we can proceed as in the proof of part (a) in order to verify the accuracy of \eqref{eq:224072078}. The details are left to the reader.
\end{proof}
\begin{remark}
Observe that there is a striking difference in the scaling behavior between the moving-average and the harmonizable representation of $T \alpha S$ random fields considered in \cref{chapter3}.
\end{remark}
We need another auxiliary result, which is of independent interest again. For this purpose recall \cref{230720111} and that the conditions mentioned there (and quoted below)  always imply the existence of the associated $T \alpha S$ representations from \cref{chapter3}. Here, $\overset{fdd}{\Rightarrow}$ means convergence of all finite-dimensional distributions. 
\begin{lemma} \label{17072067}
\begin{itemize}
\item[(a)] Assume that the conditions of \cref{15072001} are fulfilled with $H < \beta$ instead of \eqref{eq:14072005}. Then, as $\lambda \rightarrow 0$, we have that $\mathbb{X}_{\lambda} \overset{fdd}{\Rightarrow} \mathbb{X}_0$.
\item[(b)] Assume that the conditions of \cref{15012005} are fulfilled for some $D \in Q(\R^d)$. Then, as $\lambda \rightarrow 0$, we have that $\mathbb{Y}_{\lambda} \overset{fdd}{\Rightarrow} \mathbb{Y}_0$.
\end{itemize}
\end{lemma}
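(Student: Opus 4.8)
The plan is to apply Lévy's continuity theorem, so that in both parts it suffices to prove pointwise convergence of the log-characteristic functions (LCFs) of the finite-dimensional distributions. Fix $k\in\N$ and $t_1,\dots,t_k\in\R^n$. For part (a), by \eqref{eq:17052001} the LCF of $(X_\lambda(t_1),\dots,X_\lambda(t_k))^t$ at $(u_1,\dots,u_k)^t\in\R^{k\cdot d}$ equals $\int_{\R^n}\psi_\lambda\bigl(\sum_{j=1}^k f_{t_j}(s)^{*}u_j\bigr)\,ds$, where $f_{t_j}$ is the kernel from \eqref{eq:15072001}; the corresponding quantity for $\mathbb{X}_0$ is obtained by replacing $\psi_\lambda$ with $\psi_0$ (recall \cref{17072001} and the analogue of \eqref{eq:17052001} for $\mathbb{M}_0$). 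Writing $w(s):=\sum_{j=1}^k f_{t_j}(s)^{*}u_j$, which does not depend on $\lambda$, it therefore suffices to show $\int_{\R^n}\psi_\lambda(w(s))\,ds\to\int_{\R^n}\psi_0(w(s))\,ds$ as $\lambda\to 0$, and I would establish this by dominated convergence in $s$.

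For the pointwise (in $s$) convergence of the integrand I would use the symmetric representation \eqref{eq:14052005}: writing $\phi_\lambda$ in polar coordinates via \eqref{eq:17052010} with $q_\lambda(r)=e^{-\lambda r}$, one has $\psi_\lambda(w)=\int_{S^{d-1}}\int_0^\infty(\cos\skp{w}{r\theta}-1)r^{-\alpha-1}e^{-\lambda r}\,dr\,\sigma(d\theta)$. Since $\cos(\cdot)-1\le 0$ and $e^{-\lambda r}\uparrow 1$ as $\lambda\downarrow 0$, the functions $-(\cos\skp{w}{r\theta}-1)r^{-\alpha-1}e^{-\lambda r}$ increase to their $\phi_0$-counterpart, so monotone convergence yields $\psi_\lambda(w)\to\psi_0(w)$ for every fixed $w$; the limit is finite because $\mu_0$ is a genuine $S\alpha S$ law.

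The main obstacle is the \emph{uniform-in-$\lambda$} domination needed to pass the limit through $\int_{\R^n}\,ds$. Here I would invoke \cref{21072080}, with the crucial additional observation that its constant $T$ can be chosen independently of $\lambda\in(0,1]$. This is seen by re-reading its proof: from \eqref{eq:21072077} and \eqref{eq:15052097} one gets (in the single-point version) $|\psi_\lambda(D^{*}u)|\le 2C_2(1+\norm{u}^2)\,h_\lambda(D)$ with $C_2$ depending only on $\alpha$, and since by \cref{17052077} the measure $\mathcal{R}_\lambda$ is concentrated on $\lambda\,S^{d-1}$ with total mass $\sigma(S^{d-1})$, a direct estimate gives $h_\lambda(D)=\int_{S^{d-1}}(\norm{\lambda D\theta}^\alpha\wedge\norm{\lambda D\theta}^2)\,\sigma(d\theta)\le\sigma(S^{d-1})(\norm{D}^\alpha\wedge\norm{D}^2)$ whenever $\lambda\le 1$. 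Reducing to $k=1$ exactly as in the proof of \cref{15052001} (passing to the block-diagonal integrand $f'=f_{t_1}\oplus\cdots\oplus f_{t_k}$ and its generator on $\R^{k\cdot d}$, whose Rosinski measure again lives on $\lambda\,S^{k\cdot d-1}$), this yields a bound $|\psi_\lambda(w(s))|\le T'(1+\norm{u}^2)(\norm{f'(s)}^\alpha\wedge\norm{f'(s)}^2)$ valid for all $\lambda\le 1$. The right-hand side is integrable because, under the hypothesis $H<\beta$, each $f_{t_j}$ lies even in $\mathcal{I}(\mathbb{M}_0)$ (this is precisely the integrability ensuring the existence of $\mathbb{X}_0$ in \cref{230720111}(a), via \cref{15052076}). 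Dominated convergence then gives the convergence of the LCFs, and Lévy's continuity theorem yields $\mathbb{X}_\lambda\overset{fdd}{\Rightarrow}\mathbb{X}_0$.

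For part (b) the argument is identical with $\psi_\lambda$ and the $f_{t_j}$ replaced by $\tilde{\psi}_\lambda$ and the kernels $\tilde{g}_{t_j}$ from \eqref{eq:16072047}, using the LCF \eqref{eq:16072001}. Here \cref{21072080} holds verbatim for $\tilde{\psi}_\lambda$, again with a constant uniform in $\lambda\le 1$ since $\mathcal{R}_\lambda$ for $\tilde{\mu}_\lambda$ is concentrated on $\lambda\,S^{2d-1}$, and the pointwise convergence $\tilde{\psi}_\lambda\to\tilde{\psi}_0$ follows as above. The dominating function is $T'(1+\norm{u}^2)$ times $\Psi(s)$ from \eqref{eq:23072005}, by virtue of the bound \eqref{eq:2207201000}, and the integrability of $\Psi$ under $D\in Q(\R^d)$ is exactly the condition guaranteeing the existence of $\mathbb{Y}_0$ in \cref{230720111}(b). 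Hence $\mathbb{Y}_\lambda\overset{fdd}{\Rightarrow}\mathbb{Y}_0$ follows in the same way.
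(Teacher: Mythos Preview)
Your argument is correct and reaches the same conclusion, but the route you take for the $\lambda$-uniform domination is considerably more involved than what the paper does. The paper obtains the dominating function in one line by observing that $e^{-\lambda r}\le 1$ gives $|\psi_\lambda(u)|\le|\psi_0(u)|$ for every $u\in\R^d$ and every $\lambda>0$ (this is \eqref{eq:03082099}), which is already a $\lambda$-free bound. To handle $k>1$ the paper then uses the elementary trigonometric inequality $1-\cos(a+b)\le 2(2-\cos a-\cos b)$ to split
\[
\Bigl|\psi_\lambda\Bigl(\sum_{j=1}^k f_{t_j}(s)^*u_j\Bigr)\Bigr|\le 2^{k-1}\sum_{j=1}^k|\psi_0(f_{t_j}(s)^*u_j)|\le 2^{k-1}\rho\sum_{j=1}^k\|u_j\|^\alpha\,\|f_{t_j}(s)\|^\alpha,
\]
and finishes with the $L^1$-integrability of $\|f_{t_j}\|^\alpha$ under $H<\beta$ (Theorem~2.5 in \cite{lixiao}). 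This completely avoids your detour through \cref{21072080}, the uniformity analysis of its constant via the explicit form of $\mathcal{R}_\lambda$, and the block-diagonal reduction of \cref{15052001}. Incidentally, the Rosinski measure of the block-diagonal generator $\mu'_\lambda$ on $\R^{kd}$ is actually concentrated on $(\lambda/\sqrt{k})\,S^{kd-1}$, not on $\lambda\,S^{kd-1}$ as you write (the diagonal embedding $x\mapsto(x,\dots,x)$ scales norms by $\sqrt{k}$); this does not affect your bound since $\lambda/\sqrt{k}\le 1$ as well, but it illustrates why the paper's direct route is cleaner. Your approach does have the mild conceptual advantage that the dominating function involves $\|f'\|^\alpha\wedge\|f'\|^2$ rather than $\|f_{t_j}\|^\alpha$, so it would in principle apply under weaker integrability; but under the stated hypotheses $H<\beta$ (respectively $D\in Q(\R^d)$) the simpler $\alpha$-power domination is already available, and that is what the paper exploits.
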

\begin{proof} We only prove part (a) since the proof of part (b) is very similar and, moreover, mostly covered by the ideas in the proof of \cref{03082077} below. Fix $k \in \N$ as well as $t_1,...,t_k \in \R^n$ and $u_1,...,u_k \in \R^d$. Then, in view of \eqref{eq:03082001} (for $c=1$), the assertion would follow from
\begin{equation} \label{eq:03082002}  
 \int_{\R^n} \psi_{\lambda} \left( \sum_{j=1}^{k} f_{t_j}(s)^{*}u_j \right) \, ds \rightarrow  \int_{\R^n} \psi_{0} \left( \sum_{j=1}^{k} f_{t_j}(s)^{*}u_j \right) \, ds \quad (\lambda \rightarrow 0)
 \end{equation}
due to L\'{e}vy's continuity theorem. Using \eqref{eq:17052010} and \eqref{eq:14052005} we first observe for every $\lambda>0$ and $u \in \R^d$ that
\begin{align}
|\psi_{\lambda}(u)|=- \psi_{\lambda}(u) & = \int_{S^{d-1}} \int_{0}^{\infty} (1- \cos \skp{u}{r \theta} ) r^{- \alpha -1} e^{- \lambda r} \, dr \, \sigma (d \theta) \notag \\
&  \le   \int_{S^{d-1}} \int_{0}^{\infty} (1- \cos \skp{u}{r \theta} ) r^{- \alpha -1} \, dr \, \sigma (d \theta)= - \psi_{0}(u)  = |\psi_{0}(u)|. \label{eq:03082099}
\end{align}
In particular, the dominated convergence theorem implies for every $u \in \R^d$ that we have $\psi_{\lambda}(u) \rightarrow \psi_0(u)$ as $\lambda \rightarrow 0$. At the same
time it is well-known that we have $|\psi_0(u)| \le \rho \norm{u}^{\alpha}$, where $\rho>0$ is a constant (see the proof of Example 2.4 in \cite{paper2}), and that the inequality $1-\cos (a+b) \le 2(2- \cos (a)- \cos(b))$ holds true (see Proposition 1.3.4 in \cite{thebook}). Combine this with \eqref{eq:03082099} to verify the accuracy of
\begin{equation} \label{eq:03082085}
\left|  \psi_{\lambda} \left( \sum_{j=1}^{k} f_{t_j}(s)^{*}u_j \right) \right| \le 2^{k-1} \sum_{j=1}^{k} \left |\psi_0 (f_{t_j}(s)^{*}u_j)  \right| \le 2^{k-1} \rho \sum_{j=1}^{k} \norm{u_j}^{\alpha} \norm{f_{t_j}(s)}^{\alpha}.
\end{equation}
In addition and under the present assumptions, Theorem 2.5 in \cite{lixiao} states that the function on the right-hand side of \eqref{eq:03082085} belongs to $\mathcal{L}^1(ds)$. Using the dominated convergence theorem again this gives \eqref{eq:03082002}.
\end{proof}
The foregoing observations can be used to go a step further, leading to the notion of so-called \textit{tangent fields}. Note that, in the univariate case, the idea goes back to \cite{falconer4}. The following definition turns out to be a multivariate extension of it and is essentially due to \cite{multi}.
\begin{defi} \label{24072066}
Fix $x \in \R^n$ and let $\mathbb{X}=\{X(t):t \in \R^n\}$ as well as $\mathbb{X}'=\{X'(t):t \in \R^n\}$ be $\R^d$-valued random fields. Then, for $E \in Q(\R^n)$ and $D \in Q(\R^d)$, we say that $\mathbb{X}$ is \textit{$(E,D)$-localisable at x} with \textit{local form}/\textit{tangent field} $\mathbb{X}'$ if we have that
\begin{equation} \label{eq:240720987} 
\{c^{-D}(X(x+c^{E}t)-X(x)): t \in \R^n \} \overset{fdd}{\Rightarrow} \{X'(t):t \in \R^n\} \quad (c \rightarrow 0).
\end{equation}
\end{defi}
Let us emphasize that, in general, the tangent field $\mathbb{X}'$ as well as the corresponding operators $E$ and $D$ in \eqref{eq:240720987} do depend on the point $x$ at which we have localisability. However, this is not the case in our setting as the following main result demonstrates.
\begin{theorem}
Assume that the conditions of \cref{15072001} are fulfilled with $H < \beta$ instead of \eqref{eq:14072005}. Then we have for every $\lambda>0$ and $x \in \R^n$ that the moving-average representation $\mathbb{X}_{\lambda}$ is $(E,D)$-localisable at $x$ with tangent field $\mathbb{X}'=\mathbb{X}_0$ (not depending on $x$).
\end{theorem}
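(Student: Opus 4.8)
The plan is to chain three facts already established for the moving-average representation: the stationary-increment property \cref{15072001} (c), the exact scaling identity \cref{17072077} (a), and the weak convergence $\mathbb{X}_{\lambda} \overset{fdd}{\Rightarrow} \mathbb{X}_0$ as $\lambda \downarrow 0$ from \cref{17072067} (a). Since localisability in the sense of \cref{24072066} is a statement about finite-dimensional distributions, it will follow by composing distributional identities and then letting $c \to 0$. First I would remove the base point $x$: fixing $\lambda>0$ and $x \in \R^n$, \cref{15072001} (c) with $h=x$ gives $\{X_{\lambda}(x+t)-X_{\lambda}(x): t \in \R^n\} \overset{fdd}{=} \{X_{\lambda}(t): t \in \R^n\}$. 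Reparametrizing time via $t \mapsto c^E t$ (which preserves equality of all finite-dimensional distributions) and applying the deterministic map $c^{-D}$ coordinatewise yields
$$\{c^{-D}(X_{\lambda}(x + c^E t) - X_{\lambda}(x)): t \in \R^n\} \overset{fdd}{=} \{c^{-D} X_{\lambda}(c^E t): t \in \R^n\}.$$

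Next I would insert the scaling identity. By \cref{17072077} (a) we have $\{X_{\lambda}(c^E t): t\} \overset{fdd}{=} \{c^D X_{c^{q/\alpha}\lambda}(t): t\}$, so applying $c^{-D}$ cancels the factor $c^D$ and the right-hand side above simplifies. Combining, for every $c>0$,
$$\{c^{-D}(X_{\lambda}(x + c^E t) - X_{\lambda}(x)): t \in \R^n\} \overset{fdd}{=} \{X_{c^{q/\alpha}\lambda}(t): t \in \R^n\}.$$

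Finally I would let $c \to 0$. Since $E \in Q(\R^n)$, its trace $q$ equals the sum of the strictly positive real parts of its eigenvalues, so $q>0$ and $q/\alpha>0$; hence the tempering parameter $c^{q/\alpha}\lambda \to 0$ as $c \to 0$. For arbitrary $t_1,\dots,t_k \in \R^n$ the right-hand vector $(X_{c^{q/\alpha}\lambda}(t_1),\dots,X_{c^{q/\alpha}\lambda}(t_k))$ converges in distribution to $(X_0(t_1),\dots,X_0(t_k))$ by \cref{17072067} (a), and since the left-hand vector has the identical distribution for every $c$, it converges to the same limit. This is exactly \eqref{eq:240720987} with tangent field $\mathbb{X}'=\mathbb{X}_0$; as neither $\mathbb{X}_0$ nor $E,D$ depend on $x$ or $\lambda$, the tangent field is independent of the base point.

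I expect no genuine analytic obstacle here, since all delicate integrability and domination estimates were already performed in the proof of \cref{17072067}. The main step demanding care is purely structural: verifying that equality of finite-dimensional distributions is stable both under the time change $t \mapsto c^E t$ and under the coordinatewise linear map $c^{-D}$, and then recognizing that the induced regime $c^{q/\alpha}\lambda \downarrow 0$ is precisely the one in which \cref{17072067} (a) yields the limit $\mathbb{X}_0$.
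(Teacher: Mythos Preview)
Your proposal is correct and follows essentially the same route as the paper: stationary increments remove the base point, the scaling identity \eqref{eq:224072077} reduces $c^{-D}X_{\lambda}(c^E t)$ to $X_{c^{q/\alpha}\lambda}(t)$, and \cref{17072067} (a) gives the limit as $c^{q/\alpha}\lambda\to 0$. Your additional remark that $q>0$ (since $E\in Q(\R^n)$) makes explicit why $c^{q/\alpha}\lambda\to 0$ as $c\to 0$, a point the paper leaves implicit.
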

\begin{proof}
Fix $k \in \N$ and consider $t_1,...,t_k \in \R^n$. Then, using the fact that $\mathbb{X}_{\lambda}$ has stationary increments together with \eqref{eq:224072077}, it is easy to check that we have
\begin{equation*}
\forall c>0: \quad \{c^{-D}(X_{\lambda}(x+c^{E}t)-X_{\lambda}(x)): t \in \R^n \} \overset{fdd}{=}  \{X_{c^{\frac{q}{\alpha}} \lambda}(t): t \in \R^n \}.
\end{equation*}
Thus the assertion follows from \cref{17072067}. 
\end{proof}
Recall \eqref{eq:240720987} and observe for all $x,t \in \R^n$ that $x+c^E t$ contracts to $x$ as $c \rightarrow 0$, since $E \in Q(\R^n)$ and due to Lemma 4.2 in \cite{lixiao}. Hence, it is crucial to emphasize that the convergence in \eqref{eq:24072080} below is meant as $c \rightarrow \infty$, which is striking differently.  Accordingly, we do no longer use the term \textit{tangent field} hereinafter.
\begin{theorem} \label{03082077}
Assume that the conditions of \cref{15012005} are fulfilled for some $D \in Q(\R^d)$. Then we have for every $\lambda>0$ and $x \in \R^n$ that
\begin{equation}  \label{eq:24072080}
\{c^{-D}(Y_{\lambda}(x+c^{E}t)-Y_{\lambda}(x)): t \in \R^n \} \overset{fdd}{\Rightarrow} \mathbb{Y}_0 \quad (c \rightarrow \infty).
\end{equation}
\end{theorem}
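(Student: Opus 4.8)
The plan is to identify the joint log-characteristic function (LCF) of the rescaled increments, to transform it by the generalized polar change of variables, and then to pass to the limit $c\to\infty$ via dominated convergence; the conclusion then follows from L\'evy's continuity theorem. The first point to notice is that here $\mathbb{Y}_\lambda$ need not have stationary increments (the extra hypothesis of \cref{16072088} is not imposed), so, unlike in the moving-average situation, we cannot reduce to $x=0$ and must instead carry the base point $x$ along explicitly.

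Fix $k\in\N$, $t_1,\dots,t_k\in\R^n$ and $u_1,\dots,u_k\in\R^d$. Running the computation from the proof of part (b) of \cref{15012005} with base point $x$ and increment $c^E t_j$, and using that applying $c^{-D}$ to the random vectors corresponds to evaluating the LCF at $c^{-D^{*}}u_j$, the LCF of $(c^{-D}(Y_\lambda(x+c^E t_1)-Y_\lambda(x)),\dots,c^{-D}(Y_\lambda(x+c^E t_k)-Y_\lambda(x)))^t$ at $(u_1,\dots,u_k)$ equals $\int_{\R^n}\tilde\psi_\lambda(A(x,s)^{*}V_c(s))\,ds$, where the common factor $A(x,s)^{*}$ comes out of the sum and
\begin{equation*}
V_c(s):=\sum_{j=1}^{k}\left((\cos\skp{c^E t_j}{s}-1)\,\varphi(s)^{-D^{*}-\frac{q}{\alpha}I}c^{-D^{*}}u_j,\ -\sin\skp{c^E t_j}{s}\,\varphi(s)^{-D^{*}-\frac{q}{\alpha}I}c^{-D^{*}}u_j\right)^{t}.
\end{equation*}
Next I would substitute $w=c^{E^{*}}s$, so that $ds=c^{-q}\,dw$, $\skp{c^E t_j}{s}=\skp{t_j}{w}$ and $\skp{x}{s}=\skp{c^{-E}x}{w}$. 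By $E^{*}$-homogeneity of $\varphi$ we have $\varphi(c^{-E^{*}}w)=c^{-1}\varphi(w)$, and since the matrix powers of $D^{*}$ commute this yields $\varphi(s)^{-D^{*}-\frac{q}{\alpha}I}c^{-D^{*}}=c^{q/\alpha}\varphi(w)^{-D^{*}-\frac{q}{\alpha}I}$. Writing $A^{(c)}(w):=A(x,c^{-E^{*}}w)$ and letting $V(w)$ denote the same sum as $V_c$ but with $\skp{c^E t_j}{s}$ replaced by $\skp{t_j}{w}$, $\varphi(s)$ by $\varphi(w)$, and the factor $c^{-D^{*}}$ deleted, we get $V_c(s)=c^{q/\alpha}V(w)$ and $A(x,s)^{*}=A^{(c)}(w)^{*}$. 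Applying \cref{16072044} with $\rho=c^{q/\alpha}$ turns $\tilde\psi_\lambda(c^{q/\alpha}A^{(c)}(w)^{*}V(w))$ into $c^{q}\tilde\psi_{\lambda c^{-q/\alpha}}(A^{(c)}(w)^{*}V(w))$, and the factor $c^{q}$ cancels the Jacobian $c^{-q}$, so that the LCF equals $\int_{\R^n}\tilde\psi_{\lambda c^{-q/\alpha}}(A^{(c)}(w)^{*}V(w))\,dw$.

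It then remains to let $c\to\infty$. Since $E\in Q(\R^n)$ its trace $q$ is positive, so the tempering parameter $\lambda c^{-q/\alpha}\to0$; moreover $c^{-E}x\to0$, whence $A^{(c)}(w)\to I_{2d}$ pointwise by continuity. Using continuity of $\tilde\psi_0$ together with the argument leading to \eqref{eq:03082099} (which also gives $\tilde\psi_{\lambda'}\to\tilde\psi_0$ as $\lambda'\to0$), the integrand converges pointwise to $\tilde\psi_0(V(w))$, and $\int_{\R^n}\tilde\psi_0(V(w))\,dw$ is precisely the LCF of $\mathbb{Y}_0$ from \cref{230720111}(b). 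The main obstacle is therefore to justify the interchange of limit and integral. For this I would invoke the uniform bound $|\tilde\psi_{\lambda'}|\le|\tilde\psi_0|$ valid for every $\lambda'>0$ (tempering only shrinks the integrand, as in \eqref{eq:03082099}), the fact that $\norm{A^{(c)}(w)}=1$, and the estimate $|\tilde\psi_0(V)|\le\rho\norm{V}^{\alpha}$, which together reduce domination to the integrability of $\norm{V(w)}^{\alpha}$. That integrability is exactly what the existence argument for $\mathbb{Y}_0$ provides: splitting at $\tau_1(w)<1$ and $\tau_1(w)\ge1$ as in \eqref{eq:23072005}, the small-$w$ part is controlled by $\norm{w}^{\alpha}\norm{\varphi(w)^{-D^{*}-\frac{q}{\alpha}I}}^{\alpha}$ and the large-$w$ part by $\norm{\varphi(w)^{-D^{*}-\frac{q}{\alpha}I}}^{\alpha}$, both of which lie in $\mathcal{L}^1(dw)$ precisely because we assume $D\in Q(\R^d)$ (this is where the hypothesis is stronger than \eqref{eq:14072099}). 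Dominated convergence then gives convergence of the LCFs and hence \eqref{eq:24072080}, the passage to all finite-dimensional distributions being already encoded in the sums over $j$ in $V_c$ and $V$.
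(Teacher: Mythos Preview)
Your proof is correct and follows essentially the same route as the paper: compute the LCF of the rescaled increments, apply the change of variables $w=c^{E^{*}}s$ together with \cref{16072044} to arrive at $\int_{\R^n}\tilde{\psi}_{c^{-q/\alpha}\lambda}\bigl(A(c^{-E}x,w)^{*}V(w)\bigr)\,dw$, and then pass to the limit by dominated convergence with a majorant coming from the $\alpha$-integrability of $\tilde{g}_{t_j}$ guaranteed by $D\in Q(\R^d)$. The only place where the paper is slightly more explicit is the pointwise step, where it upgrades $\tilde{\psi}_{\lambda'}\to\tilde{\psi}_0$ to uniform convergence on compacts (via L\'evy and Lemma~3.1.10 in \cite{thebook}) to handle the simultaneously moving argument $A(c^{-E}x,w)^{*}V(w)$; your sketch can be completed the same way or by a direct DCT inside the integral representation of $\tilde{\psi}_{\lambda'}$.
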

\begin{proof}
Let us recall the proof of \cref{15012005}, particularly \eqref{eq:16072001} and \eqref{eq:22072073}. Then, for $t_1,...,t_k \in \R^n$ as before, we compute that the LCF of $(c^{-D}(Y_{\lambda}(x+c^{E}t_j)-Y_{\lambda}(x)) : 1 \le j \le k)^t$ is given by
\begin{equation*}
\R^{k \cdot d} \ni (u_1,...,u_k)^t \mapsto \int_{\R^n} \tilde{\psi}_{\lambda} \left (\sum_{j=1}^k  \zeta(x,c^E t_j,s,c^{-D^{*}}u_j) \right) \, ds.
\end{equation*}
At the same time, by definition of $A(x,s)$ and $\zeta$, we can use \eqref{eq:240720200}-\eqref{eq:240720201} together with \cref{16072044} to obtain that
\begin{align*}
 \int_{\R^n} \tilde{\psi} \left (\sum_{j=1}^k  \zeta(x,c^E t_j,s,c^{-D^{*}}u_j) \right) \, ds &=  \int_{\R^n} \tilde{\psi}_{\lambda} \left (c^{\frac{q}{\alpha}}  \sum_{j=1}^k  \zeta(c^{-E}x, t_j,c^{E^{*}}s, u_j) \right) \, ds \\
 &=  c^{q} \int_{\R^n} \tilde{\psi}_{c^{-q/ \alpha} \lambda} \left ( \sum_{j=1}^k  \zeta(c^{-E}x, t_j,c^{E^{*}}s, u_j) \right) \, ds  \\
  &=   \int_{\R^n} \tilde{\psi}_{c^{-q/ \alpha} \lambda} \left ( \sum_{j=1}^k  \zeta(c^{-E}x, t_j,s, u_j) \right) \, ds ,
\end{align*}
where in the last step we performed a change of variables. Fix $u_1,...,u_k \in \R^d$. Then, by L\'{e}vy's continuity theorem and in view of $A(0,s)=I_{2d}$, it obviously remains to establish that
\begin{equation} \label{eq:04082092}
\int_{\R^n} \tilde{\psi}_{c^{-q/ \alpha} \lambda} \left ( \sum_{j=1}^k  \zeta(c^{-E}x, t_j,s, u_j) \right) \, ds \rightarrow \int_{\R^n} \tilde{\psi}_{0} \left ( \sum_{j=1}^k  \zeta(0, t_j,s, u_j) \right) \, ds \quad (c \rightarrow \infty).
\end{equation}
First observe for every $w \in \R^{2d}$ and as $\lambda \rightarrow 0$ that, as in the proof of \cref{17072067}, we have $\tilde{\psi}_{\lambda}(w) \rightarrow \tilde{\psi}_0(w)$. That is, $\tilde{\mu}_{\lambda}$ converges to $\tilde{\mu}_{0}$ weakly due to L\'{e}vy's continuity theorem again. Then Lemma 3.1.10 in \cite{thebook} states that the convergence $\tilde{\psi}_{\lambda} \rightarrow \tilde{\psi}_0$ holds even uniformly on compact subsets of $\R^{2d}$. Hence, since $c^{-E} x \rightarrow 0$ as $c \rightarrow \infty$ (see Lemma 4.2 in \cite{lixiao}),  it is easy to check for every $s \in \R^n$ that we have
\begin{equation*}
 \tilde{\psi}_{c^{-q/ \alpha} \lambda} \left ( \sum_{j=1}^k  \zeta(c^{-E}x, t_j,s, u_j) \right) \rightarrow  \tilde{\psi}_{0} \left ( \sum_{j=1}^k  \zeta(0, t_j,s, u_j) \right) \quad (c \rightarrow \infty).
\end{equation*}
Use essentially the same argument as in \eqref{eq:03082085} to verify that there exists a constant $L>0$ (only depending on $k$) such that, for every $c>0$ and $s \in \R^n$, we have
\begin{align}
\left| \tilde{\psi}_{c^{-q/ \alpha} \lambda} \left ( \sum_{j=1}^k  \zeta(c^{-E}x, t_j,s, u_j) \right)  \right| & \le L \sum_{j=1}^k \norm{\zeta(c^{-E}x, t_j,s, u_j)}^{\alpha} \notag \\
& \le L \sum_{j=1}^k  \norm{A(c^{-E},x)}^{\alpha} \norm{\tilde{g}_{t_j}(s)}^{\alpha} \norm{u_j}^{\alpha}  \notag \\
&=  L \sum_{j=1}^k  \norm{\tilde{g}_{t_j}(s)}^{\alpha} \norm{u_j}^{\alpha}  , \label{eq:040820008}
\end{align}
where in the second step we benefited from \eqref{eq:22072073} together with the sup-multiplicativity of the operator norm. Also note that we used $\norm{A(c^{-E},x)}^{\alpha}=1$ again. Anyway and based on \eqref{eq:04082001}, it was just the outcome of Theorem 2.6 in \cite{lixiao} that the function in \eqref{eq:040820008} belongs to $\mathcal{L}^1(ds)$. Eventually, the dominated convergence theorem gives \eqref{eq:04082092}.
\end{proof}
\begin{remark}
Observe that the proof of \cref{03082077} is simpler if we additionally assume that $\mathbb{Y}$ has stationary increments as in \cref{16072088}. In fact, by \eqref{eq:224072078} and \cref{17072067} we get in this particular case that
\begin{align*}
\{c^{-D}(Y_{\lambda}(x+c^{E}t)-Y_{\lambda}(x)): t \in \R^n \} & \overset{fdd}{=}  \{c^{-D} Y_{ \lambda}(c^{E}t): t \in \R^n \}   \\
& \overset{fdd}{=}  \{Y_{c^{- \frac{q}{\alpha}} \lambda}(t): t \in \R^n \} \overset{fdd}{\Rightarrow} \{Y_0(t):t \in \R^n \}
\end{align*}
as $c \rightarrow \infty$.
\end{remark}
\secret{
\section*{Acknowledgement}
To do
}

\end{document}